\title{\vspace*{-4ex}Ergodic Properties of \\ Quantum Birth and Death Chains}
\author{
	David~B\"ucher\thanks{E-mail address: david.buecher@math.uni-hamburg.de}~$^,$\thanks{E-mail addresses: gaertner@mathematik.tu-darmstadt.de, kuemmerer@mathematik.tu-darmstadt.de, reusswig@mathematik.tu-darmstadt.de}\;,~ Andreas~G\"artner$^{\dagger}$,~ Burkhard~K\"ummerer$^{\dagger}$, \\ Walter~Reu\ss wig$^{\dagger}$,~ Kay~Schwieger\thanks{E-mail address: kay.schwieger@helsinki.fi}~$^{,\dagger}$,~ Nadiem~Sissouno$^{\dagger}$\\[1.4ex]%
	\begin{small}$^{\ast}$ Fachbereich Mathematik, Universit\"{a}t Hamburg,\end{small}\\[-1.2ex]%
	\begin{small}Bundesstr. 55, 20146 Hamburg, Germany\end{small}\\%
	\begin{small}$^{\dagger}$ Fachbereich Mathematik, Technische Universit\"{a}t Darmstadt,\end{small}\\[-1.2ex]%
	\begin{small}Schlo{\ss}gartenstr.~7, 64289 Darmstadt, Germany\end{small}\\%
	\begin{small}$^{\ddagger}$ Matematiikan ja tilastotieteen laitos, Helsingin yliopisto,\end{small}\\[-1.2ex]%
	\begin{small}Gustaf H\"allstr\"omin katu 2b, 00014 Helsinki, Finland\end{small}\\%
}
\date{June 17, 2013}
	\renewcommand{\eqref}{\ref}
	\newtheoremstyle{plain_big_preskip}%
	{1.25\baselineskip}
	{.5\baselineskip}
	{\itshape}
	{}
	{\bfseries}
	{.}
	{.5em}
	{}
	\theoremstyle{plain_big_preskip}
		\newtheorem{thm}{Theorem}[section]
		\newtheorem{prop}[thm]{Proposition}
	\theoremstyle{definition}
		\newtheorem{defn}[thm]{Definition}
	\theoremstyle{remark}
		\newtheorem{rmk}[thm]{Remark}
		\newtheorem{expl}[thm]{Example}
	\numberwithin{equation}{section}
	\newcounter{secfigure}[section]
		\let\oldfigure\figure
		\let\endoldfigure\endfigure
		\renewenvironment{figure}{\stepcounter{secfigure}\oldfigure}{\endoldfigure}
\newcommand*{\alg}{\mathcal}
\newcommand*{\hilb}{\mathscr}
\newcommand*{\N}{\mathbb N}		
\newcommand*{\R}{\mathbb R}		
\newcommand*{\C}{\mathbb C}		
\newcommand*{\one}{\mathbbm 1}		
\renewcommand*{\H}{\hilb H}		
\newcommand*{\B}{\hilb B}		
\newcommand*{\BH}{\B(\H)}		
\newcommand*{\D}{\mathbb D}		
\newcommand*{\tensor}{\otimes}		
\newcommand{\fix}[1]{\alg F(#1)}
\DeclarePairedDelimiter{\abs}{\lvert}{\rvert}		
\DeclarePairedDelimiter{\norm}{\lVert}{\rVert}		
\DeclarePairedDelimiter{\scal}{\langle}{\rangle}	
\newcommand{\bra}[1]{\langle #1 \vert} 
\newcommand{\ket}[1]{\vert #1 \rangle} 
\DeclareMathOperator{\UCP}{UCP}		
\DeclareMathOperator{\Tr}{Tr}		
\DeclareMathOperator{\diag}{diag}	
\DeclareMathOperator{\supp}{supp}	
\DeclareMathOperator{\lin}{span}	
\newcommand{\ie}{\mbox{i.\,e.}\xspace}		
\newcommand{\eg}{\mbox{e.\,g.}\xspace}		
\newcommand{\cf}{\mbox{cf.}\xspace}		
\newcommand*{\etal}{\mbox{et~al.}\xspace}
\begin{document}
\maketitle
\sloppy

\begin{abstract}\vspace*{-4ex}
	We study a class of quantum Markov processes that, on the one hand, is inspired by the micromaser experiment in quantum optics and, on the other hand, by classical birth and death processes. We prove some general geometric properties and irreducibility for non-degenerated parameters. Furthermore, we analyze ergodic properties of the corresponding transition operators. For homogeneous birth and death rates we show how these can be fully determined by explicit calculation. As for classical birth and death chains we obtain a rich yet simple class of quantum Markov chains on an infinite space, which allow only local transitions while having divers ergodic properties.
\end{abstract}

\section{Introduction}	\label{sec:intro}
In the present paper we examine a class of quantum Markov chains on an infinite dimensional space that allows elementary computations, but also offers a great diversity of processes of varying ergodic properties.

The construction of our Markov chains is adopted from a finite dimensional variant introduced by R.\,Gohm, B.\,K\"ummerer, and T.\,Lang \cite{GKL2006}. Their work was motivated by the micromaser experiment in quantum optics, where a photon of a single-mode electromagnetic field, \ie a quantum harmonic oscillator, can be created (``birth'') or annihilated (``death'') by the interaction with a two-level atom (see, \eg, \cite{VBW+2000} and \cite{WBK+2000}). This effect was also studied by S.\,Gleyzes \etal \cite{GKG+2007} who were able to detect the life-time of a photon in a concrete experimental design. Some ergodic properties of the repeated interaction were investigated by L.\,Bruneau and C.-A.\,Pillet \cite{BP2009}. Their results can partially be recovered from our more general approach. A continuous-time generalization of the interaction including dissipation was also investigated by F.\,Fagnola, R.\,Rebolledo, and C.\,Saavedra \cite{FRS1994} (see also \cite{CFL2000}), who obtain results similar to those of the present paper.

According to the Jaynes-Cummings model of the micromaser experiment the interaction with a single atom shifts the energy level of the field only to neighboring levels. The Markov chains studied here may be interpreted as generalized Jaynes-Cummings interactions. As in the physical model they only allow local transitions. Mathematically, each of our processes extends a classical birth and death chain to a non-commutative framework (\cf Remark \ref{rm:whyBnD}).

Although a general framework of birth and death chains is still missing in quantum probability theory, we offer a wide class of quantum Markov chains that should be covered by such a framework in our opinion. We invite the reader to use these chains as a prototype for examples or---maybe more important---counterexamples for typical phenomena of quantum Markov chains on infinite spaces.

This paper is organized as follows: The studied class of quantum Markov chains is introduced in Section \ref{sec:bndchain} by constructing the corresponding transition operators $T_\psi$. We also characterize under which conditions this transition operator is an extreme point in the set of all unital completely positive normal maps. Section \ref{sec:ergProp} is devoted to the study of ergodic properties of our quantum birth and death chains in general. In particular, we show that for non-extremal parameters the transition operator is irreducible and establish criteria that ensure weak mixing.
In Section \ref{sec:toy} we restrict ourselves to a subclass of transition operators that generalizes homogeneous birth and death chains. For these toy examples we examine their mixing properties and the existence of invariant normal states. Finally, Section \ref{sec:summary} contains a summary and visualizations of our results ordered by the examined properties.

\section{Notation and Preliminaries}\label{sec:prelim}
Throughout this paper $\N$ denotes the set of natural numbers including zero and \mbox{$\mathbb D := \{ \zeta \in \C: \abs \zeta  \le 1\}$} denotes the closed unit disc in the complex plane.
By $\H$ we refer to a Hilbert space with scalar product $\scal{\,\cdot\,,\,\cdot\,}$ linear in the first component. We write $\B(\H)$ for the $^*$\nobreakdash-algebra of all bounded linear operators on $\H$. The identity operator on $\H$ is denoted by $\one$. For an (orthogonal) projection $p \in \BH$ we denote by $p^\bot := \one - p$ its complement. 
A~\emph{state} on $\B(\H)$ is a bounded linear functional $\varphi:\B(\H) \to \C$ satisfying $\varphi(\one) = 1$ and $\varphi(x^*x) \ge 0$ for every $x \in \B(\H)$. It is called \emph{faithful} if $\varphi(x^*x) = 0$ implies $x=0$. The state is called \emph{normal} if there is a trace class operator $\rho \in \BH$ with $\varphi(x) = \Tr(\rho \, x)$ for every $x \in \BH$.
Each unit vector $\xi \in \H$ gives rise to a normal state by putting $\varphi_\xi(x) := \langle x\,\xi,\xi \rangle$, $x \in \BH$, called the \emph{vector state} of $\xi$. The vector states, also called \emph{pure states}, are exactly the extreme points of the convex set of all normal states on $\B(\H)$.

In this paper we almost exclusively consider the Hilbert space $\H := \ell^2(\N)$ of all square summable complex sequences equipped with the canonical orthonormal basis $(e_n)_{n \in \N}$. For $m,n \in \N$ we denote by $e_{m,n}:\H\to\H$, $\xi\mapsto\scal{\xi,e_n}e_m$ the canonical matrix unit. For brevity we set $p_n:=e_{n,n}$ and $p_{[m,n]}:=\sum_{k=m}^n p_k$. An operator $x\in\BH$ can also be represented by the infinite matrix $(x_{m,n})_{m,n \in \N}$ of its \emph{coefficients} $x_{m,n}:=\scal{x\, e_n,e_m}$. The operator $x$ is called \emph{diagonal} if the matrix $(x_{m,n})_{m,n \in \N}$ is diagonal, \ie, if $x_{m,n}=0$ for all~$m\neq n$. We denote by $\ell^\infty(\N) \subseteq \BH$ the abelian subalgebra of all diagonal operators. Notice that $\ell^\infty(\N)$ is the von Neumann algebra generated by the projections $p_n$, $n\in\N$. A~normal state $\varphi$ on $\BH$ is called \emph{diagonal} if the corresponding trace class operator is diagonal, or equivalently, if $\varphi(e_{m,n})=0$ for all $m\neq n$.

The linear operators on the Hilbert space $\C^n$ with orthonormal basis $e_1,\ldots, e_n$ can be identified with the algebra $M_n$ of all complex $(n \times n)$-matrices. Then an operator of the tensor product algebra $\B(\H \tensor \C^n)$ can be written as an $(n \times n)$-matrix with entries in $\B(\H)$. Note that  $\B(\H \tensor \C^n)$ is linearly spanned by all operators
\begin{equation*}
	x \tensor y = \begin{pmatrix}
		y_{1,1} \, x & \dots & y_{1,n} \, x \\
		\vdots && \vdots \\
		y_{n,1} \, x & \dots & y_{n,n} \, x
	\end{pmatrix}
\end{equation*}
with $x \in \B(\H)$ and $y = (y_{i,j})_{1 \le i,j \le n} \in M_n$. 

Let $\H$ be a Hilbert space and let $T:\B(\H) \to \B(\H)$ be a linear map. Then $T$ is called \emph{unital} if $T(\one) = \one$. 
The map $T$ is called \emph{positive} if for every positive operator $x \in \B(\H)$ also $T(x)$ is a positive operator. It is called \emph{$n$\nobreakdash-positive} if the map
\begin{equation*}
	T_n:\B(\H \tensor \C^n) \to \B(\H \tensor \C^n), \quad (x_{i,j})_{i,j} \mapsto ( T(x_{i,j}))_{i,j}
\end{equation*}
is positive. If $T$ is $n$\nobreakdash-positive for all $n \in \N$ then it is called \emph{completely positive}. It follows that a completely positive map is continuous with respect to the operator norm on $\B(\H)$ with $\norm T = \norm{T(\one)}$.

A positive linear map $T:\B(\H) \to \B(\H)$ is called \emph{normal} if $T(\sup_i x_i) = \sup_i T(x_i)$ for every bounded increasing family $(x_i)_{i \in I}$ of positive operators $x_i \in \B(\H)$. A unital completely positive normal linear map $T:\B(\H) \to \B(\H)$ is briefly called a \emph{ucp\nobreakdash-map}. The convex set of all such maps is denoted by $\UCP(\H)$.

Let $T:\B(\H) \to \B(\H)$ be a ucp\nobreakdash-map. Then by a theorem of K.~Kraus \cite{Kraus1971} there is a family $(a_i)_{i \in I}$ of operators $a_i \in \BH$ such that
\begin{equation*}
	T(x) = \sum_{i \in I} a_i^* x a_i 
\end{equation*}
for every $x \in \BH$, where the sum converges in the strong operator topology. This decomposition of $T$ is known as \emph{Kraus decomposition},  the operators $a_i$ are called \emph{Kraus operators} of $T$. In this paper we will only deal with \emph{finite} Kraus decompositions, \ie, the index set $I$ is finite. In this case the ucp\nobreakdash-map $T$ is an \emph{extreme point} in the convex set $\UCP(\H)$ if and only if it admits a Kraus decomposition such that the operators $a_j^* a_i \in \B(\H)$ with $i,j \in I$ are linearly independent (\cf \cite{Choi1975}).

\section*{The State Space of \texorpdfstring{{\boldmath $M_2$}}{M2}}

For each state $\psi$ on $M_2$ there is a unique positive operator $\rho \in M_2$ with $\Tr(\rho)=1$ such that $\psi(x) = \Tr(\rho \, x)$. This operator can be written in the form
\begin{equation*}
	\rho = \tfrac12 \begin{pmatrix}
		1+z & x + iy \\
		x-iy & 1 - z
	\end{pmatrix} 
\end{equation*}
where $x,y,z \in \R$ with $x^2 + y^2 + z^2 \le 1$. By means of this parametrization the convex set of states on $M_2$ is affinely isomorphic to the 3-dimensional Euclidean unit ball (see Figure~\ref{fig:bloch}). For $z=1$ (north pole) and $z=-1$ (south pole) we denote the corresponding states by $\psi_+$ and $\psi_-$, respectively.

To simplify our computations we set $\lambda := \tfrac12(1+z)$. Then the off-diagonal entry of $\rho$ is a complex number of absolute value smaller than $\sqrt{\lambda(1-\lambda)}$. Hence $\rho$ is of the form
\begin{equation}
	\label{eq:density}
	\rho = \begin{pmatrix}
		\lambda & \bar\zeta \sqrt{\lambda(1-\lambda)} \\
		\zeta \sqrt{\lambda(1-\lambda)}  & 1-\lambda
	\end{pmatrix}
\end{equation}
for some $0 \le \lambda \le 1$ and some $\zeta \in \D = \{\theta \in \C : \abs \theta \le 1\}$. For the parameters $\lambda =0$ and $\lambda=1$ we agree to set $\zeta := 0$. This convention will allow us to phrase our theorems more consistently. 

We will frequently restrict our attention to two subsets of parameters. One typical choice will be a diagonal state, \ie $\zeta = 0$. In the physical literature this is sometimes called a temperature state. The other typical choice will be a pure state, \ie $\lambda \in \{0,1\}$ or $\abs\zeta =1$.

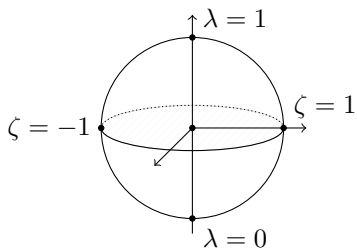
\begin{figure}[bht]
	\centering
	\begin{tikzpicture}
		\draw  (0,0)  circle  (1.2cm);
		\draw[pattern=north east lines,opacity=0.15] (0,0) ellipse (1.2cm and 0.3cm);
		\draw  (-1.2,0)  arc (180:360:1.2cm  and  0.3cm);
		\draw[densely dotted]  (1.2,0)  arc (0:180:1.2cm  and  0.3cm);

		\draw[->] (0,0) -- (1.5,0);
		\draw[->] (0,-1.4) -- (0,1.5);
		\draw[->] (0,0) -- (-0.5,-0.5);

		\filldraw[black] (0,0) circle (1pt);
		\filldraw[black] (0,1.2) node[above right]{\small $\lambda=1$} circle (1pt);
		\filldraw[black] (0,-1.2) node[below right]{\small $\lambda=0$} circle (1pt);
		\filldraw[black] (-1.2,0) node[left]{\small $\zeta=-1$} circle (1pt);
		\filldraw[black] (1.2,0) node[above right]{\small $\zeta=1$} circle (1pt);
	\end{tikzpicture}
	\caption{Bloch\;\textquoteleft sphere\textquoteright\ in our parametrization.}
	\label{fig:bloch}
\end{figure}

\section{A Class of Quantum Birth and Death Chains}	\label{sec:bndchain}
In this section we describe the class of ucp\nobreakdash-maps that we will study throughout the paper. Each of these map is the transition operator of a quantum Markov process in the sense of \cite{AFL1982}. For a recent overview on quantum Markov processes in general we recommend \cite{Kuemmerer2006}.

Fix two sequences $(\alpha_n)_{n \in \N}$ and $(\beta_n)_{n \in \N}$ of numbers $-1 \le \alpha_n, \beta_n \le 1$ satisfying \mbox{$\alpha_n^2 + \beta_n^2 = 1$} for every $n \in \N$, $\alpha_0 = 1$, and $\beta_n \neq 0$ for every $n \ge 1$. We refer to $\alpha_n$ and $\beta_n$ as the \emph{model parameters}. 
Consider the Hilbert space $\H := \ell^2(\N)$ equipped with the canonical orthonormal basis $(e_n)_{n \in \N}$. Denote by $a,b \in \B(\H)$ the diagonal operators with $a \, e_n := \alpha_n \, e_n$ and $b \, e_n := \beta_n \, e_n$ for every $n \in \N$, \ie
\begin{align*}
	a &= \begin{pmatrix}
		1 &  \\
		 & \alpha_1   \\
		&  & \alpha_2   \\
		&&  & \ddots  
	\end{pmatrix} \;,
	&
	b &= \begin{pmatrix}
		0 &  \\
		 & \beta_1   \\
		&  & \beta_2   \\
		&&  & \ddots  
	\end{pmatrix} \;.
\end{align*}
Write $s \in \B(\H)$ for the isometric shift of the basis, \ie $s \, e_n = e_{n+1}$ for every $n \in \N$. Moreover, for each state $\psi$ on $M_2$ denote by $P_\psi: \B(\H \tensor \C^2) \to \B(\H)$ the linear extension of $P_\psi(x \tensor y) := \psi(y) \cdot x$ for all $x\in \BH$, $y \in M_2$.

\pagebreak[3]
We define a unitary operator $u \in \B(\H \tensor \C^2)$ by
\begin{equation}	\label{eq:unitary}
	u := \begin{pmatrix}
		s^* a s & i s^* b \\
		i bs & a
	\end{pmatrix} \;.
\end{equation}

\begin{defn}
	For a state $\psi$ on $M_2$ the unital completely positive normal map
	\begin{equation*}
		T_\psi:\B(\H) \to \B(\H), \quad T_\psi(x) := P_\psi \bigl( u^*(x \tensor \one) u \bigr) 
	\end{equation*}
	is called the \emph{transition operator} associated with the state $\psi$.
\end{defn}
The map $T_\psi$ will be the focal point of our studies in this paper. For convenience we included a diagram of the action of $T_\psi$ on the canonical matrix units of $\B(\H)$ in Figure~\ref{fig:action}. The necessary computations can be found in Appendix \ref{appendix}.

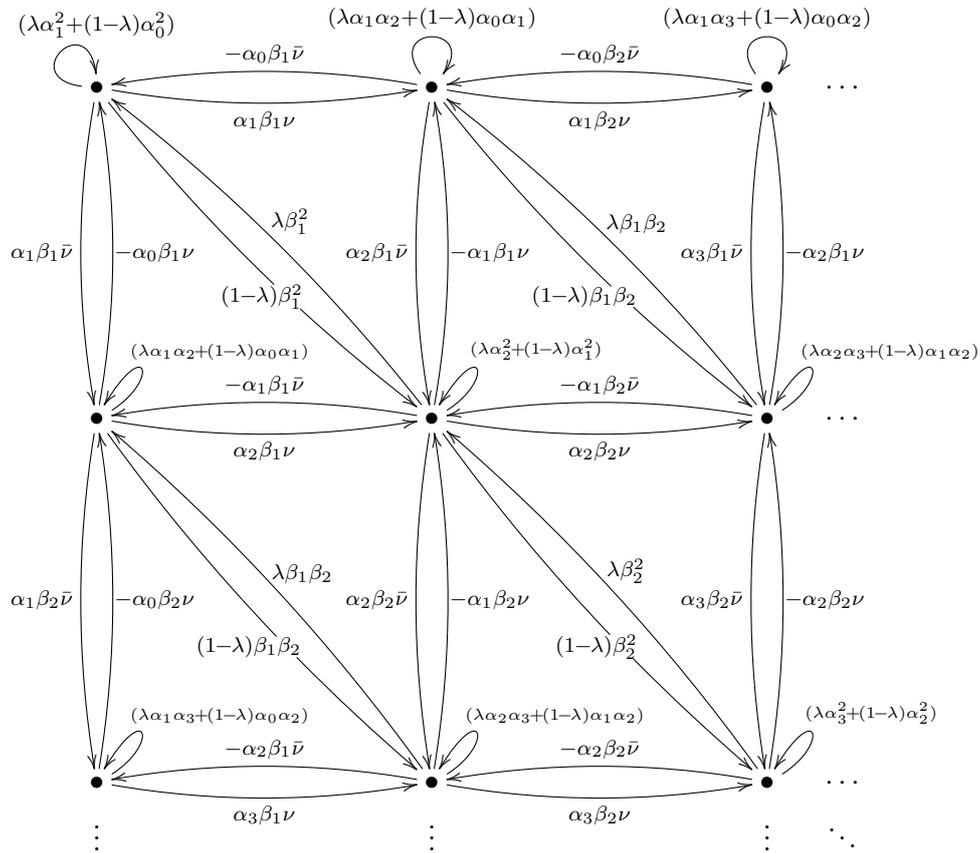
\begin{figure}[bht]
	\[
	\begin{xy}
		\xymatrix{
			\bullet
			\ar@/_/[rrrrdddd]|(.6){(1\hspace{-.1ex}-\hspace{-.1ex}\lambda)\beta_1^2\hspace{4ex}}
			\ar@/_/[rrrr]_{\alpha_1\beta_1\nu}
			\ar@/_/[dddd]_{\alpha_1\beta_1\bar\nu}
			\ar@(l,u)^(.7){(\lambda\alpha_1^2+(1\hspace{-.1ex}-\hspace{-.1ex}\lambda)\alpha_0^2)}
			& & & & \bullet
			\ar@/_/[rrrrdddd]|(.6){(1\hspace{-.1ex}-\hspace{-.1ex}\lambda)\beta_1\beta_2\hspace{6ex}}
			\ar@/_/[llll]_{-\alpha_0\beta_1\bar\nu}
			\ar@/_/[rrrr]_{\alpha_1\beta_2\nu}
			\ar@/_/[dddd]_{\alpha_2\beta_1\bar\nu}
			\ar@(lu,ru)^{(\lambda\alpha_1\alpha_2+(1\hspace{-.1ex}-\hspace{-.1ex}\lambda)\alpha_0\alpha_1)}
			& & & & \bullet
			\ar@/_/[llll]_{-\alpha_0\beta_2\bar\nu}
			\ar@/_/[dddd]_{\alpha_3\beta_1\bar\nu}
			\ar@(lu,ru)^{(\lambda\alpha_1\alpha_3+(1\hspace{-.1ex}-\hspace{-.1ex}\lambda)\alpha_0\alpha_2)}
			& \hspace{-1em}\cdots
			\\ \\ \\ \\
			\bullet
			\ar@/_/[rrrrdddd]|(.6){(1\hspace{-.1ex}-\hspace{-.1ex}\lambda)\beta_1\beta_2\hspace{6ex}}
			\ar@/_/[rrrr]_{\alpha_2\beta_1\nu}
			\ar@/_/[uuuu]_{\hspace{-0.5ex}-\alpha_0\beta_1\nu}
			\ar@/_/[dddd]_{\alpha_1\beta_2\bar\nu}
			\ar@`{+(9,4),+(-2,9)}_(.56){\text{\begin{tiny}$(\lambda\alpha_1\alpha_2\!\!+\!\!(1\!\!-\!\!\lambda)\alpha_0\alpha_1)$\end{tiny}}}
			& & & & \bullet
			\ar@/_/[rrrrdddd]|(.6){(1\hspace{-.1ex}-\hspace{-.1ex}\lambda)\beta_2^2\hspace{4ex}}
			\ar@/_/[uuuullll]_{\hspace{-1ex}\lambda\beta_1^2}
			\ar@/_/[llll]_{-\alpha_1\beta_1\bar\nu}
			\ar@/_/[rrrr]_{\alpha_2\beta_2\nu}
			\ar@/_/[uuuu]_{\hspace{-0.5ex}-\alpha_1\beta_1\nu}
			\ar@/_/[dddd]_{\alpha_2\beta_2\bar\nu}
			\ar@`{+(9,4),+(-2,9)}_(.56){\text{\begin{tiny}$(\lambda\alpha_2^2\!\!+\!\!(1\!\!-\!\!\lambda)\alpha_1^2)$\end{tiny}}}
			& & & & \bullet
			\ar@/_/[uuuullll]_{\hspace{-1ex}\lambda\beta_1\beta_2}
			\ar@/_/[llll]_{-\alpha_1\beta_2\bar\nu}
			\ar@/_/[uuuu]_{\hspace{-0.5ex}-\alpha_2\beta_1\nu}
			\ar@/_/[dddd]_{\alpha_3\beta_2\bar\nu}
			\ar@`{+(9,4),+(-2,9)}_(.56){\text{\begin{tiny}$(\lambda\alpha_2\alpha_3\!\!+\!\!(1\!\!-\!\!\lambda)\alpha_1\alpha_2)$\end{tiny}}}
			& \hspace{-1em}\cdots
			\\ \\ \\ \\
			**[d] \underset{\underset{\vdots}{ }}{\bullet} 
			\ar@/_/[rrrr]_{\alpha_3\beta_1\nu}
			\ar@/_/[uuuu]_{\hspace{-0.5ex}-\alpha_0\beta_2\nu}
			\ar@`{+(9,4),+(-2,9)}_(.56){\text{\begin{tiny}$(\lambda\alpha_1\alpha_3\!\!+\!\!(1\!\!-\!\!\lambda)\alpha_0\alpha_2)$\end{tiny}}}
			& & & & **[d] \underset{\underset{\vdots}{ }}{\bullet}  
			\ar@/_/[uuuullll]_{\hspace{-1ex}\lambda\beta_1\beta_2}
			\ar@/_/[llll]_{-\alpha_2\beta_1\bar\nu}
			\ar@/_/[rrrr]_{\alpha_3\beta_2\nu}
			\ar@/_/[uuuu]_{\hspace{-0.5ex}-\alpha_1\beta_2\nu}
			\ar@`{+(9,4),+(-2,9)}_(.56){\text{\begin{tiny}$(\lambda\alpha_2\alpha_3\!\!+\!\!(1\!\!-\!\!\lambda)\alpha_1\alpha_2)$\end{tiny}}}
			& & & & **[d] \underset{\underset{\vdots}{ }}{\bullet}  
			\ar@/_/[uuuullll]_{\hspace{-1ex}\lambda\beta_2^2}
			\ar@/_/[llll]_{-\alpha_2\beta_2\bar\nu}
			\ar@/_/[uuuu]_{\hspace{-0.5ex}-\alpha_2\beta_2\nu}
			\ar@`{+(9,4),+(-2,9)}_(.56){\text{\begin{tiny}$(\lambda\alpha_3^2\!\!+\!\!(1\!\!-\!\!\lambda)\alpha_2^2)$\end{tiny}}}
			& **[d] \hspace{-1em}\underset{\underset{\ddots}{ }}{\cdots}  
		}
	\end{xy}
	\]
	\caption{Action of $T_\psi$ where $i\zeta\sqrt{\lambda(1-\lambda)}$ is abbreviated by $\nu$.}
	\label{fig:action}
\end{figure}

\pagebreak
\begin{rmk}	\label{rm:whyBnD}
	The map $T_\psi$ can be regarded as the transition operator of a \emph{``quantum birth and death chain''} for several reasons:\begin{enumerate}
	\item
		Let $\psi$ be a state on $M_2$ with parameters $0 \le \lambda \le 1$ and $\zeta \in \D$. In the special case $\zeta = 0$ the abelian subalgebra $\ell^\infty(\N) \subseteq \B(\H)$ of all diagonal operators is invariant for $T_\psi$ in the following strong sense: If $x \in \BH$ is a diagonal operator then also $T_\psi(x)$ is a diagonal operator and if $\varphi$ is a diagonal normal state on $\BH$ then also $\varphi \circ T_\psi$ is a diagonal state. The restriction of $T_\psi$ to the subalgebra $\ell^\infty(\N)$ is the transition operator of the classical birth and death chain whose transition graph on the state space $\N$ is shown in Figure~\ref{fig:classical_bnd_chain}.
 
		\begin{figure}[thb]
			\centering
			~\begin{xy} \xymatrix@C=1.8cm {
				{\text{\textcircled{\raisebox{-0.9pt}{\small 0}}}}
				\ar@(ld,lu)@<-2pt>^{(1-\lambda) \alpha_0^2 + \lambda \alpha_1^2}
				\ar@/^/[r]^{\lambda\beta_1^2}
				&
				{\text{\textcircled{\raisebox{-0.9pt}{\small 1}}}}
				\ar@/^/[l]^{(1-\lambda) \beta_1^2}
				\ar@(ul,ur)^{(1-\lambda)\alpha_1^2 + \lambda \alpha_2^2}
				\ar@/^/[r]^{\lambda \beta_2^2}
				&
				{\text{\textcircled{\raisebox{-0.9pt}{\small 2}}}}
				\ar@/^/[l]^{(1-\lambda) \beta_2^2}
				\ar@(ul,ru)^{(1-\lambda) \alpha_2^2 + \lambda \alpha_3^2}
				\ar@/^/[r]^{\lambda \beta_3^2}
				&
				\cdots \ar@/^/[l]^{(1-\lambda) \beta_3^2}
			} \end{xy}
			\caption{Classical birth and death chain in the Schr\"odinger picture.}
			\label{fig:classical_bnd_chain}
		\end{figure}

		For an arbitrary $\zeta\in\D$ this classical transition operator is the composition \mbox{$P_{\ell^\infty(\N)} \circ T_\psi:\ell^\infty(\N) \to \ell^\infty(\N)$}, where  $P_{\ell^\infty(\N)}$ denotes the conditional expectation onto $\ell^\infty(\N)$ given by
		\begin{equation*}
			P_{\ell^\infty(\N)}(x) := \sum_{n=0}^\infty p_n x p_n = \begin{pmatrix}
				x_{0,0} & 0 \\
				0 & x_{1,1} & 0 \\
				& \ddots & \ddots & \ddots 
			\end{pmatrix} \;.
		\end{equation*}
	\item 
		Let $\psi$ be a state on $M_2$ with arbitrary parameters $0 \le \lambda \le 1$ and $\zeta \in \D$ as in Equation \eqref{eq:density}.
		Then for every $m \le n$ we have (\cf Proposition \vref{prop:neighboringTransitions})
		\begin{equation*}
			p_{[m+1,n-1]} \le T_\psi(p_{[m,n]}) \le p_{[m-1, n+1]} \;.
		\end{equation*}
		In this sense $T_\psi$ admits only local transitions to neighboring subspaces. (This is one of the abstract characterizations of quantum birth and death chains that F.\,Haag proposed in his master thesis \cite{Haag2002}). The image of $p_{[m,n]}$ can be visualized as follows:
		\[
			T_\psi(p_{[m,n]}) \;= \mbox{\begin{small} $\;\left(
				\begin{array}{c|c|c|c|c|c|c|c|c}
					\multicolumn{1}{l}{0}&\multicolumn{8}{l}{}\\ \cline{2-4}
					& \multicolumn{1}{l}{\ast} & \multicolumn{2}{l}{\ast}&\multicolumn{5}{l}{}\\ \cline{3-4}
					& \ast & \multicolumn{1}{l}{\ast}\\ \cline{4-6}
					&&& \multicolumn{1}{l}{1} &\multicolumn{2}{l|}{} & \multicolumn{3}{l}{}\\
					\multicolumn{3}{l|}{}&\multicolumn{1}{l}{} & \multicolumn{1}{l}{\ddots}&\multicolumn{1}{l|}{} & \multicolumn{3}{l}{}\\
					\multicolumn{3}{l|}{}&\multicolumn{1}{l}{}&\multicolumn{1}{l}{} & \multicolumn{1}{l|}{1}& &&\\ \cline{4-6}
					\multicolumn{6}{l}{} & \ast & \ast & \\ \cline{6-7}
					\multicolumn{6}{l}{} & \multicolumn{1}{l}{\ast} & \ast & \\ \cline{6-8}
					\multicolumn{8}{l}{} & 0\vphantom{\displaystyle\sum}
				\end{array}
			\right)$\end{small}}
		\]
	\end{enumerate}
\end{rmk}

\begin{rmk}	\label{rm:TrappingState}
	The a priori requirement $\beta_n \neq 0$ for every $n \ge 1$ assures that the transition operator $T_\psi$ is not trivially reducible. If we had $\beta_{n+1} = 0$  for some $n \in \N$ then for an arbitrary state $\psi$ on $M_2$ we would have \mbox{$T_\psi(p_{[0,n]}) = p_{[0,n]}$} and we could restrict our investigation to the restriction of $T_\psi$ to the subalgebras $p_{[0,n]} \B(\H) p_{[0,n]} = \B(p_{[0,n]} \H)$ and $p_{[0,n]}^\bot \B(\H) p_{[0,n]}^\bot = \B(p_{[0,n]}^\bot \H)$.
	For the investigation on the finite-dimensional algebra $\B(p_{[0,n]}\H)$ we refer to \cite{GKL2006}. On the algebra $\B(p_{[0,n]}^\bot \H)$ we again obtain a transition operator of a quantum birth and death process as discussed in this paper for the shifted coefficients $\tilde \alpha_k := \pm \alpha_{n+k+1}$ and $\tilde \beta_k := \pm \beta_{n+k+1}$ ($k\in \N$). We will show later that our prerequisite $\beta_n \neq 0$ assures irreducibility of $T_\psi$ for every faithful state $\psi$ on $M_2$ (see Theorem~\vref{thm:irred_weaklymix}).
\end{rmk}

\begin{expl}[Jaynes-Cummings Interaction]
	\label{expl:micromaser}
	~\\
	In physics the choices $\alpha_n := \cos(g\, \sqrt n)$ and $ \beta_n := -\sin(g \, \sqrt n)$ with some field constant $g \in \R$ are of special interest. For these parameters the transition operator $T_\psi$ describes the interaction of a single-mode electromagnetic field with a two-level atom in the micromaser experiment according to the Jaynes-Cummings interaction (see \cite{JC1963, VBW+2000}). In this setting our a priori assumption $\beta_n \neq 0$ for every $n \in \N$ is known as the absence of the trapping-state condition (\cf~\cite{WVH+1999}). For a diagonal state $\psi$ (temperature state) some ergodic properties of this model, with and without the trapping-state condition, were analyzed by L.\,Bruneau and C.-A.\,Pillet \cite{BP2009}. They characterized the invariant normal states and proved ergodicity, alias thermal relaxation.
\end{expl}

\begin{prop}	\label{prop:KrausDecomp}
	Let $\psi$ be a state on $M_2$ parametrized as in Equation \eqref{eq:density} with \mbox{$0 \le \lambda \le 1$} and $\zeta \in \D$. Then a Kraus decomposition of the transition operator $T_\psi$ is given~by
	\begin{gather*}
		T_\psi(x) = \lambda (t_1^* x t_1 + t_2^* x t_2) + (1-\lambda)(1-\abs{\zeta}^2) (t_3^* x t_3 + t_4^* x t_4)
		\shortintertext{with}
		\begin{aligned}
			t_1 &:= s^* as + i \zeta \sqrt{\tfrac{1-\lambda}{\lambda}} \, s^* b \;,
			&
			t_2 &:= bs - i \zeta \sqrt{\tfrac{1-\lambda}{\lambda}} \, a \;,
			\\
			t_3 &:= s^* b \;, 
			& 
			t_4 &:= a \;.
		\end{aligned}
	\end{gather*}
\end{prop}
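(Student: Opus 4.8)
The plan is to turn the computation of $T_\psi$ into a finite sum of ``compressions'' by first decomposing the density operator of $\psi$ into rank-one pieces. Writing $\epsilon_1,\epsilon_2$ for the canonical basis of $\C^2$, a direct comparison of matrix entries shows that the operator $\rho$ from Equation~\eqref{eq:density} can be written as
\[
	\rho = \ket{\xi}\bra{\xi} + (1-\lambda)\bigl(1-\abs{\zeta}^2\bigr)\,\ket{\epsilon_2}\bra{\epsilon_2}
	\qquad\text{with}\qquad
	\xi := \sqrt{\lambda}\,\epsilon_1 + \zeta\sqrt{1-\lambda}\,\epsilon_2 .
\]
Both coefficients are non-negative since $0 \le \lambda \le 1$ and $\abs\zeta \le 1$; the only point requiring a little care here is the convention that $\scal{\,\cdot\,,\,\cdot\,}$ is linear in the first argument, which is exactly what makes the off-diagonal entry of $\ket{\xi}\bra{\xi}$ come out as $\bar\zeta\sqrt{\lambda(1-\lambda)}$ rather than its conjugate.

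Next I would record the elementary fact that for $\eta \in \C^2$ the operator $V_\eta \in \B(\H,\, \H \tensor \C^2)$ defined by $V_\eta\,\vartheta := \vartheta \tensor \eta$ satisfies $V_\eta^*(x \tensor y)V_\eta = \scal{y\eta,\eta}\,x$ for all $x \in \BH$ and $y \in M_2$. Since $\psi(y) = \Tr(\rho\, y) = \scal{y\xi,\xi} + (1-\lambda)(1-\abs{\zeta}^2)\scal{y\epsilon_2,\epsilon_2}$ by the decomposition of $\rho$, linearity of $P_\psi$ gives
\[
	P_\psi(z) = V_\xi^* z\, V_\xi + (1-\lambda)\bigl(1-\abs{\zeta}^2\bigr)\,V_{\epsilon_2}^* z\, V_{\epsilon_2}
	\qquad \bigl(z \in \B(\H \tensor \C^2)\bigr),
\]
and hence $T_\psi(x) = (uV_\xi)^*(x \tensor \one)(uV_\xi) + (1-\lambda)(1-\abs{\zeta}^2)(uV_{\epsilon_2})^*(x \tensor \one)(uV_{\epsilon_2})$. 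Writing each $uV_\eta$ in block form $\binom{A_\eta}{B_\eta}$ with $A_\eta, B_\eta \in \BH$, one has $(uV_\eta)^*(x \tensor \one)(uV_\eta) = A_\eta^* x A_\eta + B_\eta^* x B_\eta$, so it only remains to compute these column operators. Plugging in $u$ from Equation~\eqref{eq:unitary} gives $A_{\epsilon_2} = i s^* b$ and $B_{\epsilon_2} = a$, and, after factoring a $\sqrt\lambda$ out of each entry (using $\tfrac1i=-i$ for the second one), $A_\xi = \sqrt{\lambda}\,t_1$ and $B_\xi = i\sqrt{\lambda}\,t_2$. Since $A^* x A$ is unchanged when $A$ is multiplied by a unimodular scalar, this yields precisely the asserted identity for $T_\psi(x)$, with Kraus operators $\sqrt{\lambda}\,t_1$, $\sqrt{\lambda}\,t_2$, $\sqrt{(1-\lambda)(1-\abs{\zeta}^2)}\,t_3$, $\sqrt{(1-\lambda)(1-\abs{\zeta}^2)}\,t_4$.

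I do not expect a serious obstacle: the statement is essentially a bookkeeping identity, convergence is trivial since the sum has only four terms, and the relation $\lambda(t_1^*t_1 + t_2^*t_2) + (1-\lambda)(1-\abs{\zeta}^2)(t_3^*t_3 + t_4^*t_4) = \one$ (so that this is a genuine Kraus decomposition) follows automatically from $T_\psi$ being unital, or can be checked by hand as a sanity test. The one genuinely delicate point is the degenerate choice $\lambda \in \{0,1\}$, where $\sqrt{(1-\lambda)/\lambda}$ appearing in $t_1, t_2$ is merely formal; there the operators $A_\xi, B_\xi$ above remain perfectly well defined (they reduce to $s^* a s$, $i b s$ for $\lambda = 1$ and vanish for $\lambda = 0$, recalling the convention $\zeta = 0$ in these cases), so I would dispose of $\lambda \in \{0, 1\}$ in a separate line by reading $T_\psi$ off directly from its definition. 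As a fallback, the whole proposition can also be obtained by the more pedestrian route of expanding $u^*(x \tensor \one)u$ into a $2 \times 2$ block matrix and applying $P_\psi$ entrywise via $\psi(e_{i,j}) = \rho_{j,i}$, at the price of expanding several mixed products $u_{i,j}^* x\, u_{k,l}$.
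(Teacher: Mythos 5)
Your proof is correct, and it takes a genuinely different route from the one in the paper. The paper proceeds by brute force: it expands $u^*(x \tensor \one)u$ as a $2\times 2$ block matrix, applies $P_\psi$ entrywise using the matrix entries of $\rho$, and then \emph{verifies} that the resulting four-term expression coincides with $\lambda(t_1^* x t_1 + t_2^* x t_2) + (1-\lambda)(1-\abs{\zeta}^2)(t_3^* x t_3 + t_4^* x t_4)$ by expanding $\lambda t_1^* x t_1$ and $\lambda t_2^* x t_2$ and collecting terms --- essentially your ``pedestrian fallback''. You instead \emph{derive} the Kraus operators: the rank-one decomposition $\rho = \ket{\xi}\bra{\xi} + (1-\lambda)(1-\abs\zeta^2)\ket{\epsilon_2}\bra{\epsilon_2}$ (which I checked entrywise against Equation~\eqref{eq:density}, including the orientation of the off-diagonal term under the paper's first-argument-linear convention) turns $P_\psi$ into a sum of two compressions $V_\eta^*(\,\cdot\,)V_\eta$, so the Kraus operators appear automatically as the block columns of $uV_\xi$ and $uV_{\epsilon_2}$; your computations $A_\xi = \sqrt\lambda\, t_1$, $B_\xi = i\sqrt\lambda\, t_2$, $A_{\epsilon_2} = i s^* b$, $B_{\epsilon_2} = a$ are all correct, and the unimodular factors $i$ indeed drop out of $A^* x A$. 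What your approach buys is conceptual transparency --- it explains \emph{why} these particular $t_j$ arise (from a decomposition of $\psi$ into sub-normalized vector states composed with the Stinespring-type dilation $x \mapsto u^*(x\tensor\one)u$) and removes the need to guess the answer before checking it; the paper's computation is shorter to typeset but opaque. Your separate treatment of $\lambda \in \{0,1\}$ is also the right instinct, since $t_1, t_2$ are only formally defined at $\lambda = 0$, whereas $A_\xi, B_\xi$ are not; the paper's proof silently glosses over this degenerate case.
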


\begin{rmk}
	Note that for a faithful state $\psi$ the above Kraus decomposition is minimal if and only if the operators $t_1, \dots, t_4$ are linearly independent. This is exactly the case if $a$ is not a multiple of $s^* a s$, \ie, if there is  no $-1 < q < 1$ with $\alpha_n = q^n$ for all $n \in \N$.
\end{rmk}

\begin{proof}
	For $x \in \B(\H)$ we compute
	\begin{align*}
		T_\psi(x) 
		&= P_\psi \bigl( u^* (x \tensor \one) u \bigr)  
		= P_\psi \begin{pmatrix}
			s^*as \, x \, s^* a s + s^*b \, x \, bs 
			&
			i(s^*as \, x \, s^*b - s^* b \, x \, a)
			\\
			i(a \, x \, bs - bs\, x \, s^*as)
			&
			bs \, x \, s^* b + a \, x \, a
		\end{pmatrix}
		\\
		&= \lambda (s^*as x s^* a s + s^*b x bs ) + (1-\lambda)( bs x s^* b + a x a) + 
		\\
		& \qquad + i \bar\zeta \sqrt{\lambda(1-\lambda)} \,(a x bs - bsx s^*as) + i \zeta \sqrt{\lambda(1-\lambda)}  \, (s^*as x s^*b - s^* b x a)
		\\
		&= \lambda (t_1^* x t_1 + t_2^* x t_2) + (1-\lambda)(1-\abs{\zeta}^2) (t_3^* x t_3 + t_4^* x t_4) \;,
	\end{align*}
	since
	\begin{align*}
		\lambda t_1^* x t_1 
		&= \lambda s^*as \, x \, s^* a s +i \zeta \sqrt{\lambda(1-\lambda)}  \, s^*as \, x \, s^*b - i \bar \zeta \sqrt{\lambda(1-\lambda)} \, bs \, x\, s^* as +
		\\
		&\qquad+ (1-\lambda) \abs{\zeta}^2\, bs  \, x \, s^* b \;,
		\\
		\lambda t_2^* x t_2
		&= \lambda s^* b \, x \, bs - i \zeta \sqrt{\lambda(1-\lambda)} \, s^* b \, x \, a + i \bar\zeta \sqrt{\lambda(1-\lambda)} \, a \, x \, bs + (1-\lambda) \abs{\zeta}^2 axa \;.
		\qedhere
	\end{align*}
\end{proof}

\pagebreak[3]
\begin{prop}	\label{prop:rotation}
	Let $\psi$ be a state on $M_2$ and \mbox{$\theta \in \C$, $\abs \theta = 1$}. Denote by $v_\theta \in M_2$ and $u_\theta \in \B(\H)$ the unitaries
	\begin{align*}
		v_\theta &:= \begin{pmatrix}
			1  \\  & \bar \theta
		\end{pmatrix} \;,
		&
		u_\theta &:= \begin{pmatrix}
			1  \\
			& \theta \\
			&  & \theta^2 \\
			&&& \ddots & 
		\end{pmatrix} 
	\end{align*}
	and set $\psi_\theta := \psi(v_\theta^* \, \cdot \, v_\theta)$. Then for every $x \in \B(\H)$ we have
	\begin{equation*}
		T_{\psi_\theta}(x) 
		= u_\theta\, T_\psi(u_\theta^*\, x\, u_\theta)\, u_\theta^* \;.
	\end{equation*}
	In this sense $T_\psi$ and $T_{\psi_\theta}$ are (inner) conjugate.
\end{prop}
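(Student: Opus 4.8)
\section*{Proof proposal for Proposition \ref{prop:rotation}}

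The plan is to reduce everything to a single commutation identity, namely that the unitary $u$ of Equation \eqref{eq:unitary} commutes with $u_\theta \tensor v_\theta$. To establish it I would first record how conjugation by the diagonal unitary $u_\theta$ acts on the ingredients of $u$: it fixes the diagonal operators $a$, $b$ (hence also $s^*as$), while a one-line computation on the basis vectors $e_n$ gives $u_\theta\, s\, u_\theta^* = \theta\, s$, and therefore $u_\theta\, s^*b\, u_\theta^* = \bar\theta\, s^*b$ and $u_\theta\, bs\, u_\theta^* = \theta\, bs$. Writing $u_\theta\tensor v_\theta$ as the $2\times2$ operator matrix $\diag(u_\theta,\bar\theta\,u_\theta)$ and multiplying out $u\,(u_\theta\tensor v_\theta)$ and $(u_\theta\tensor v_\theta)\,u$ block by block, the two results coincide: on the diagonal $u_\theta$ commutes with $s^*as$ and $a$, and on the off-diagonal the phase $\bar\theta$ coming from $v_\theta$ cancels exactly the phase picked up by $s^*b$, respectively by $bs$. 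Passing to adjoints, $u^*$ commutes with $u_\theta\tensor v_\theta$ as well. Intuitively this says that the phase twist $v_\theta$ on the $M_2$-factor is undone by the diagonal gauge transformation $u_\theta$ on $\ell^2(\N)$.

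Next I would isolate the two elementary features of the slice map $P_\psi$. From $P_\psi(x\tensor y) = \psi(y)\,x$ one reads off the $\B(\H)$-bimodule property $P_\psi\bigl((c\tensor\one)\,z\,(d\tensor\one)\bigr) = c\,P_\psi(z)\,d$ for $c,d\in\B(\H)$ and $z\in\B(\H\tensor\C^2)$ (verify on elementary tensors, extend by linearity and normality), and also the identity $P_{\psi_\theta}(z) = P_\psi\bigl((\one\tensor v_\theta^*)\,z\,(\one\tensor v_\theta)\bigr)$, which is just the definition $\psi_\theta = \psi(v_\theta^*\,\cdot\,v_\theta)$ spelled out on elementary tensors.

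Then I would assemble the proof. Starting from $T_{\psi_\theta}(x) = P_{\psi_\theta}\bigl(u^*(x\tensor\one)u\bigr)$, apply the $P_{\psi_\theta}$-identity to bring the $v_\theta$'s inside, write $\one\tensor v_\theta^* = (u_\theta\tensor\one)(u_\theta^*\tensor v_\theta^*)$ and $\one\tensor v_\theta = (u_\theta\tensor v_\theta)(u_\theta^*\tensor\one)$, and use the commutation identity to slide $u_\theta^*\tensor v_\theta^*$ past $u^*$ and $u_\theta\tensor v_\theta$ past $u$. The middle then collapses via $(u_\theta^*\tensor v_\theta^*)(x\tensor\one)(u_\theta\tensor v_\theta) = (u_\theta^* x u_\theta)\tensor\one$ because $v_\theta^* v_\theta = \one$, leaving $T_{\psi_\theta}(x) = P_\psi\bigl((u_\theta\tensor\one)\,u^*\bigl((u_\theta^* x u_\theta)\tensor\one\bigr)u\,(u_\theta^*\tensor\one)\bigr)$. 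A final application of the bimodule property pulls $u_\theta$ and $u_\theta^*$ out of $P_\psi$ and gives $u_\theta\,T_\psi(u_\theta^* x u_\theta)\,u_\theta^*$, as claimed.

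The only step requiring an idea is the first one --- guessing that $v_\theta$ can be compensated by $u_\theta$, i.e.\ that $u$ commutes with $u_\theta\tensor v_\theta$; everything after that is bookkeeping with $P_\psi$, with no estimates or convergence subtleties beyond the normality used to extend the bimodule identity.
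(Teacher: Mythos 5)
Your proposal is correct and follows essentially the same route as the paper: the key step in both is the commutation identity $u\,(u_\theta\tensor v_\theta)=(u_\theta\tensor v_\theta)\,u$ (which the paper derives from $s^*u_\theta=\theta u_\theta s^*$, equivalent to your gauge computation $u_\theta s u_\theta^*=\theta s$), followed by the bimodule property of the slice map $P_\psi$ to pull $u_\theta$ and $u_\theta^*$ outside. The only cosmetic difference is that the paper verifies $u_\theta^*\,T_{\psi_\theta}(x)\,u_\theta=T_\psi(u_\theta^*xu_\theta)$ directly rather than factoring $\one\tensor v_\theta$ as you do.
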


\begin{proof}
	Denote by $u \in \B(\H \tensor \C^2)$ the unitary defined by Equation~\vref{eq:unitary}. Using the commutation relation $s^* u_\theta = \theta u_\theta s^*$ and its adjoint, we find
	\begin{equation*}
		u (u_\theta \tensor v_\theta) = \begin{pmatrix}
			s^* a s u_\theta 	&  i\bar \theta s^* b u_\theta \\
			i bs u_\theta 		& \bar \theta a u_\theta
		\end{pmatrix} = \begin{pmatrix}
			u_\theta s^* a s 		& i u_\theta s^* b \\
			i \bar \theta u_\theta bs	& \bar \theta u_\theta a
		\end{pmatrix} = (u_\theta \tensor v_\theta) u \;.
	\end{equation*}
	Now we compute
	\begin{align*}
		u_\theta^* \, T_{\psi_\theta}(x) \, u_\theta
		&= u_\theta^* \, P_\psi \bigl( (\one \tensor v_\theta^*) \, u^* \, (x \tensor \one) \, u \, (\one \tensor v_\theta) \bigr) \, u_\theta
		\\
		&= P_\psi \bigl( (u_\theta \tensor v_\theta)^* \, u^* \, (x \tensor \one) \, u \, (u_\theta \tensor v_\theta)  \bigr)
		\\
		&= P_\psi \bigl( u^* (u_\theta^* x u_\theta \tensor \one) u \bigr)
		= T_\psi(u_\theta^* x u_\theta) \;.
		\qedhere
	\end{align*}
\end{proof}

\begin{rmk}\label{rmk:rotation}
	\begin{enumerate}
	\item 
		If the state $\psi$ is parametrized as in Equation~\ref{eq:density} with $0\le \lambda \le 1$ and \mbox{$\zeta \in \D$} then the state $\psi_\theta$ is given by the parameters $\lambda$ and $\zeta \cdot \theta$. Thus, by Proposition~\ref{prop:rotation} we may restrict our investigations to states $\psi$ with $\zeta \ge 0$ up to a suitable unitary conjugation of the transition operator.
	\item
		Observe that replacing  $\zeta$ by $-\zeta$ in the parametrization of the state $\psi$ is equivalent to replacing $u$ by $u^*$ (or $b$ by $-b$) in the construction of the transition operator. It hence follows from Proposition~\ref{prop:rotation} that the time reversed process with transition operator
		\begin{equation*}
			T_\psi^+(x) := P_\psi \bigl( u(x\tensor \one)u^* \bigr)
		\end{equation*}
		also belongs to our class of quantum birth and death chains. More precisely, choosing $\theta = (-1)$ in Proposition~\ref{prop:rotation} we obtain $T_\psi^+(x) = u_{-1} \, T_\psi(u_{-1}^* \, x \, u_{-1}) \, u_{-1}^* = T_{\psi_{-1}}(x)$. 
	\end{enumerate}
\end{rmk}

\begin{thm}	\label{thm:extremal}
	\begin{enumerate}
		\item 
			Let $\psi \neq \psi_+$ be a state on $M_2$. Then $T_\psi$ is extremal in $\UCP(\H)$ if and only if $\psi$ is pure.
		\item If $\psi=\psi_+$ then the following statements are equivalent:
		\begin{enumerate}
			\item 
				The transition operator $T_{\psi_+}$ is \emph{not} extremal in $\UCP(\H)$.
			\item	\label{en:extremal_2b}
				$s^* a^2 s$ is a non-zero multiple of $\one$ or a non-zero multiple of $p_0$.
		\end{enumerate}
	\end{enumerate}
\end{thm}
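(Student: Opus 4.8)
The proof rests on the extremality criterion recalled at the end of Section~\ref{sec:prelim}: a ucp-map with a finite Kraus decomposition is an extreme point of $\UCP(\H)$ if and only if it admits a Kraus decomposition $(a_i)_{i\in I}$ whose products $(a_j^*a_i)_{i,j\in I}$ are linearly independent. Since a non-minimal decomposition always has linearly dependent products, it suffices to test this on a minimal one (discard zero operators and linear dependencies among the $a_i$). The computations stay light because everything lives in a narrow band: $s^*as$ is diagonal while $bs$ and $s^*b$ are one-step weighted shifts in opposite directions, so every Kraus operator below is a diagonal operator plus a one-step shift, and the products $a_j^*a_i$ are supported on at most five adjacent diagonals. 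Two further recurring facts are the unitality identity $a_1^*a_1+a_2^*a_2=T_\psi(\one)=\one$ for a two-term decomposition, and the standing hypothesis $\beta_n\neq0$ ($n\geq1$), which guarantees that the shift weights appearing in these products never vanish.

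For the second assertion, $\psi=\psi_+$ means $\lambda=1$ and $\zeta=0$, so by Proposition~\ref{prop:KrausDecomp} $T_{\psi_+}(x)=t_1^*xt_1+t_2^*xt_2$ with $t_1=s^*as$ diagonal and $t_2=bs$. I would split on whether $t_1=0$, i.e.\ $a=p_0$. If so, $t_2$ is an isometry and $\{t_2\}$ is minimal with single product $t_2^*t_2=\one$, which is trivially independent; hence $T_{\psi_+}$ is extremal, and since $s^*a^2s=0$ condition~(b) fails, so (a)$\Leftrightarrow$(b). If $a\neq p_0$ then $\{t_1,t_2\}$ is minimal; here $t_1^*t_1=s^*a^2s$ and $t_2^*t_2=\one-s^*a^2s$ are diagonal, $t_1^*t_2$ sits on one off-diagonal and $t_2^*t_1=(t_1^*t_2)^*$ on the other, so a linear relation among the four products exists precisely when $t_1^*t_2=0$ or $s^*a^2s$ is a scalar multiple of $\one$. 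Reading off weights shows $t_1^*t_2=0\iff\alpha_n=0$ for all $n\geq2$, which (given $a\neq p_0$) is the same as $s^*a^2s$ being a nonzero multiple of $p_0$; and a scalar multiple of $\one$ is necessarily nonzero here. Together this is exactly condition~(b).

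For the ``$\Leftarrow$'' part of the first assertion, let $\psi\neq\psi_+$ be pure; then $\psi=\psi_-$ ($\lambda=0$) or $\lambda\in(0,1)$ with $\abs{\zeta}=1$, and in both cases Proposition~\ref{prop:KrausDecomp} gives a two-term decomposition $T_\psi(x)=w_1^*xw_1+w_2^*xw_2$ where $w_1$ is a diagonal operator plus a nonzero multiple of $s^*b$ and $w_2$ is a diagonal operator plus a (possibly zero) multiple of $bs$. Because $s^*b$ and $bs$ shift in opposite directions and $w_2\neq0$, the operators $w_1,w_2$ are linearly independent, so this decomposition is minimal. For independence of $w_1^*w_1,w_2^*w_2,w_1^*w_2,w_2^*w_1$ I would argue: in the generic case ($\lambda\in(0,1)$ and $a\neq p_0$), $w_1^*w_2$ carries a nonzero two-step shift $\propto(bs)^2$ and $w_2^*w_1$ the opposite two-step shift, neither of which the tridiagonal operators $w_i^*w_i$ possess; this forces the coefficients of $w_1^*w_2,w_2^*w_1$ to vanish in any relation, and then a short computation (using that the one-step-shift parts of $w_1^*w_1$ and $w_2^*w_2$ are proportional to the same shift operators, with opposite signs) together with $w_1^*w_1+w_2^*w_2=\one$ kills the remaining coefficients. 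The degenerate configurations $a=p_0$ (where $w_1^*w_1,w_2^*w_2$ are diagonal with different behaviour at index~$0$) and $\psi=\psi_-$ (where all four products are diagonal or one-step shifts, the diagonal ones $b^2$ and $a^2$ differing at index~$0$) I would treat directly along the same band-structure lines. Thus $T_\psi$ is extremal.

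For ``$\Rightarrow$'', the state space of $M_2$ being a Euclidean ball, a non-pure $\psi$ is a proper convex combination $\psi=\tfrac12(\psi'+\psi'')$ of two distinct states, and by affinity of $\psi\mapsto T_\psi$ we get $T_\psi=\tfrac12T_{\psi'}+\tfrac12T_{\psi''}$; so it remains to see $T_{\psi'}\neq T_{\psi''}$, i.e.\ that $\psi\mapsto T_\psi$ is injective. For this I would observe that the $(0,0)$-entry of $T_\psi(p_1)$ equals $\lambda\beta_1^2$, recovering $\lambda$ (as $\beta_1\neq0$); that for $\lambda\in(0,1)$ a suitable off-diagonal entry of $T_\psi$ recovers $\zeta\sqrt{\lambda(1-\lambda)}$ and hence $\zeta$ (see Figure~\ref{fig:action}); and that $\lambda\in\{0,1\}$ already determines $\psi$. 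The main obstacle is organizational rather than conceptual: one must keep precise track of which of the finitely many diagonals each product $a_j^*a_i$ occupies, and carefully isolate the several degenerate configurations in the model parameters ($a=p_0$, $a=p_0+\alpha_1p_1$) and in the state ($\psi_\pm$); the band structure, the identity $a_1^*a_1+a_2^*a_2=\one$, and $\beta_n\neq0$ are what keep this bookkeeping finite and transparent.
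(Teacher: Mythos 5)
Your argument is correct in substance, and for the extremality directions it follows the same route as the paper: take the two-term Kraus decomposition from Proposition~\ref{prop:KrausDecomp}, use the fact that the four products $t_j^*t_i$ live on disjoint bands of diagonals so that the off-diagonal products decouple from the (tri)diagonal ones, and reduce to the linear independence of $t_1^*t_1$ and $t_2^*t_2$ via $t_1^*t_1+t_2^*t_2=\one$, with $a=p_0$ as the degenerate configuration. (The paper implements the band separation for $\abs\zeta=1$ by testing against $x\mapsto\scal{x\,e_0,e_2}$ and $x\mapsto\scal{x\,e_2,e_0}$ and then showing $\lambda t_1^*t_1\notin\C\one$; that is exactly your two-step-shift observation.) Where you genuinely diverge is in the non-extremality directions. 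For non-pure $\psi$ your route via affinity of $\psi\mapsto T_\psi$ together with injectivity is valid and more conceptual than the paper's, which instead splits the Kraus sum explicitly into $T_1=t_3^*\cdot\,t_3+t_4^*\cdot\,t_4$ and a rescaled $(t_1,t_2)$-part and checks they differ on one matrix entry. For injectivity, note that the entry you first name, $\scal{T_\psi(p_1)e_1,e_0}=-\alpha_1\beta_1\nu$, can vanish when $\alpha_1=0$; the entry $\scal{T_\psi(e_{0,1})e_0,e_0}=-\alpha_0\beta_1\bar\nu=-\beta_1\bar\nu$ always recovers $\zeta$ since $\alpha_0=1$ and $\beta_1\neq0$.

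The one point that needs shoring up is the implication (b)$\Rightarrow$(a) in the second part. You deduce non-extremality of $T_{\psi_+}$ from the linear \emph{dependence} of the products in a \emph{minimal} Kraus decomposition. The criterion recorded in the preliminaries only states that $T$ is extremal if and only if it \emph{admits} a decomposition with linearly independent products; to conclude non-extremality from the failure of independence in one particular (minimal) decomposition you additionally need that minimal Kraus decompositions are unique up to a unitary change of basis, which preserves the span of the set $\{a_j^*a_i\}$. This is a standard fact, but it is precisely what the paper avoids by exhibiting the convex decompositions explicitly: $T_{\psi_+}=\mu\bigl(\mu^{-1}s^*as\,x\,s^*as\bigr)+(1-\mu)\bigl((1-\mu)^{-1}s^*b\,x\,bs\bigr)$ when $s^*a^2s=\mu\one$, and $T_{\psi_+}(x)=\tfrac12(\sqrt\mu\,p_0-s^*b)x(\sqrt\mu\,p_0-bs)+\tfrac12(\sqrt\mu\,p_0+s^*b)x(\sqrt\mu\,p_0+bs)$ when $s^*a^2s=\mu p_0$. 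Either cite the uniqueness theorem or write down these two decompositions; everything else in your proposal goes through.
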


\begin{proof}
	Let $\psi$ be parametrized by $0 \le \lambda \le 1$ and $\zeta \in \D$ as in Equation~\ref{eq:density}. We denote by $t_1, t_2, t_3, t_4$ the Kraus operators given in Proposition~\vref{prop:KrausDecomp}. 
	\begin{enumerate}
	\item 
		If $\psi$ is not pure, \ie $0 < \lambda < 1$ and $\abs\zeta < 1$, then 
		\begin{align*}
			T_1(x) &:= t_3^* x t_3 + t_4^* x t_4 \;,
			&
			T_2(x) &:= \lambda (\lambda + \abs{\zeta}^2 - \lambda \abs{\zeta}^2)^{-1} (t_1^* x t_1 + t_2^* x t_2) 
		\end{align*}
		are ucp\nobreakdash-maps. Since $\scal{T_1(p_0) \, e_1, e_0} = 0 \neq \scal{T_2(p_0) \, e_1, e_0}$ these maps do not coincide (\cf Appendix~\ref{appendix}). Being a non-trivial convex combination of $T_1$ and $T_2$, the transition operator $T_\psi$ is not extremal in $\UCP(\H)$.

		For the converse, recall that in general a ucp\nobreakdash-map $T(x) = \sum_{i=1}^N a_i^* x a_i$ on $\BH$ is extremal in $\UCP(\H)$ if the set $\{a_{i}^* a_{j} \;|\; 1 \le i,j \le N\}$ is linearly independent. We first consider $\psi = \psi_-$. Then the transition operator $T_\psi$ is given by
		\begin{equation*}
			T_{\psi_-}(x) = t_3^* x t_3 + t_4^* x t_4 = bs \, x \, s^* b + a \, x \, a  
			\;.
		\end{equation*}
		Hence we have to consider the products
		\begin{align*}
			t_3^* t_3 &= b^2 \;, 
			& 
			t_4^* t_4 &= a^2 \;,
			&
			t_3^* t_4 &= s(a s^*bs) \;,
			&
			t_4^* t_3 &= (a s^* bs) s^* \;.
		\end{align*}
		Observe that $a^2$ and $b^2$ are linearly independent diagonal operators ($\alpha_0 = 1$, \mbox{$\beta_0 = 0$}). Moreover, since $as^*bs$ is a non-zero diagonal operator ($\alpha_0=1$, $\beta_1\neq 0$), the matrix of coefficients of $t_3^* t_4$ and $t_4^* t_3$ is a non-vanishing strictly lower and strictly upper triangular matrix, respectively. Consequently, the above products are indeed linearly independent.

		Now let $\psi \notin \{\psi_+,\psi_-\}$ be pure, \ie $0 < \lambda < 1$ and $\abs \zeta = 1$. By Proposition~\ref{prop:rotation} we may assume $\zeta =1$. Then the transition operator is given by
		\begin{equation*}
			T_\psi(x) = \lambda (t_1^* x t_1 + t_2^* x t_2) \;.
		\end{equation*}
		Hence we have to show that the products
		\begin{small}
		\begin{align*}
			\lambda t_1^* t_1 &= -i \sqrt{\lambda(1-\lambda)} abs + \lambda s^* a^2 s + (1-\lambda) b^2 + i \sqrt{\lambda(1-\lambda)} s^* ab \;,
			\\
			\lambda t_2^* t_2 &= i \sqrt{\lambda(1-\lambda)} abs + \lambda s^* b^2 s + (1-\lambda) a^2 - i \sqrt{\lambda(1-\lambda)} s^* ab \;,
			\\
			\lambda t_1^* t_2 &= -i\sqrt{\lambda(1-\lambda)} (bsbs^*)s^2 + \lambda (s^* as b)s - (1-\lambda)(s as^* b)s - i \sqrt{\lambda(1-\lambda)} s^* as a \;,
			\\
			\lambda t_2^* t_1 &= i \sqrt{\lambda(1-\lambda)} as^* as + \lambda s^*(s^* as b) - (1-\lambda) s^* (sas^* b) + i\sqrt{\lambda(1-\lambda)} (s^*)^2 (bsbs^*) \;.
		\end{align*}
		\end{small}
		\!\!%
		are linearly independent. Since $\beta_n \neq 0$ for $n \ge 1$, the functional $\varphi_{0,2}(x) := \scal{x \, e_0, e_2}$ on $\BH$ vanishes on all these products except $\lambda t_1^* t_2$ and, analogously, $\varphi_{2,0}(x) := \scal{x \, e_2, e_0}$ vanishes on all products except $\lambda t_2^*t_1$. It therefore suffices to prove that $\lambda t_1^* t_1$, $\lambda t_2^* t_2$ are linearly independent, \ie, they generate a two-dimensional subspace. Obviously, the linear span of $\lambda t_1^* t_1$ and $\lambda t_2^* t_2$ contains the identity operator $\one$. Assume $\lambda t_1^* t_1 \in \C \one$. Then $ab$ must vanish and, since $\beta_n \neq 0$ for all $n \ge 1$, this implies $a = p_0$. But then $\lambda t_1^* t_1 = (1-\lambda) b^2\notin\C\one$. Hence the linear span of $\lambda t_1^* t_1$ and $\lambda t_2^* t_2$ is indeed two-dimensional.
	\item
		Let $\psi = \psi_+$, \ie $\lambda=1$ and $\zeta = 0$.
		First suppose $s^* a^2 s = \mu \one$ for some $\mu \neq 0$. Then also $\mu \neq 1$, since $0 \neq s^* b^2 s = (1-\mu) \one$. Hence the transition operator
		\begin{equation*}
			T_{\psi_+}(x) = \mu( \mu^{-1} s^* as \, x \, s^* as) + (1-\mu) \bigl( (1-\mu)^{-1} s^* b \, x \, bs \bigr)
		\end{equation*}
		is a convex combination of two distinct ucp\nobreakdash-maps.
		Second, if $s^* a^2 s = \mu p_0$ for some $\mu \neq 0$ then
		\begin{equation*}
			T_{\psi_+}(x) = \tfrac12 (\sqrt\mu\, p_0 - s^* b) \, x\, (\sqrt\mu\, p_0 - bs) + \tfrac12 (\sqrt\mu\, p_0 + s^* b) \, x \, (\sqrt\mu\, p_0 + bs)
		\end{equation*}
		is a convex decomposition into distinct ucp\nobreakdash-maps.

		Conversely, if $s^* as = 0$ then $T_{\psi_+}(x) = s^*b \, x \, bs$ is obviously extremal. Otherwise, the transition operator is given by
		\begin{equation*}
			T_{\psi_+}(x) = t_1^* x t_1 + t_2^* x t_2 = s^* as \, x \, s^* as + s^* b\, x \, bs \;.
		\end{equation*}
		We show that the products
		\begin{align*}
			t_1^* t_1 &= s^* a^2 s \;, 
			& 
			t_2^* t_2 &= s^* b^2 s = \one - s^* a^2 s \;,
			\\
			t_1^* t_2 &= (s^* as b) s \;,
			&
			t_2^* t_1 &= s^* (s^* as b) 
		\end{align*}
		are linearly independent. If $s^* a^2 s$ (and hence $s^* as$) is not a multiple of $p_0$ then the diagonal operator $s^* as b$ does not vanish. It follows that for $t_1^* t_2$ and $t_2^* t_1$ the matrix of coefficients is a non-zero strictly lower and strictly upper triangular matrix, respectively. Moreover, if $s^* a^2 s$ is not a multiple of $\one$ then $t_1^* t_1$ and $t_2^* t_2$ are linearly independent diagonal operators. Consequently, if Condition~\ref{en:extremal_2b} is not satisfied then $T_{\psi_+}$ is extremal in $\UCP(\H)$.\qedhere
	\end{enumerate}
\end{proof}

\begin{rmk}
	The above proof relies on the assumption that $\beta_n\neq0$ for all $n\in\N$. Dropping this assumption we find the following counterexample: Let \mbox{$a:=\diag(1,0,1,0,1,\ldots)$}, \mbox{$b:=\diag(0,1,0,1,0,\ldots)$} and let $\psi$ be the state on $M_2$ with parameters $\lambda:=\nicefrac12$ and \mbox{$\zeta:=1$}. Then $\psi$ is pure and a convex decomposition of the transition operator $T_\psi$ into two distinct ucp\nobreakdash-maps is given by ($x\in\BH$)
	\begin{equation*}
		T_\psi(x)=\tfrac12(t_1^*xt_1+t_2^*xt_2)=\tfrac12\bigl(\tfrac12(t_1+t_2)^*x(t_1+t_2)\bigr)+\tfrac12\bigl(\tfrac12(t_1-t_2)^*x(t_1-t_2)\bigr) \;.
	\end{equation*}
\end{rmk}

\section{General Ergodic Properties}	\label{sec:ergProp}
Now we shift our attention to ergodic properties of the transition operator $T_\psi$. More precisely, we consider the following notions of ergodic theory: 
Let $T:\BH\to\BH$ be a ucp\nobreakdash-map. A projection $p\in\BH$ satisfying $T(p) \ge p$ is called \emph{subharmonic}. The map $T$ is called \emph{irreducible} if there are no subharmonic projections except $0$ and $\one$. If the fixed space of $T$, which we denote by $\fix T$, consists only of multiples of $\one$ then $T$ is called \emph{ergodic}. If the multiples of $\one$ are the only eigenvectors of $T$ corresponding to any eigenvalue $\mu\in\C$ with $\abs{\mu}=1$ then $T$ is called \emph{weakly mixing}.

\pagebreak[3]
\begin{prop}	\label{prop:subh}
	Let $T:\BH\to\BH$ be a ucp\nobreakdash-map and fix a Kraus decomposition $T(x) = \sum_{i \in I} a_i^*xa_i$ with $a_i \in \BH$, $i \in I$.
	\begin{enumerate}
		\item 
			For a projection $p\in\BH$ the following conditions are equivalent:
			\begin{enumerate}
				\item $p$ is subharmonic.
				\item\label{item:a_ip=pa_ip} $a_ip = pa_ip$ for every $i \in I$.
			\end{enumerate}
		\item 	\label{en:subh_eigval}
			Suppose that $T$ admits a faithful invariant state and let $\mu\in\C$, $\abs{\mu}=1$. Then for $x\in\BH$ the following conditions are equivalent:
			\begin{enumerate}
				\item $T(x)=\mu\,x$.
				\item $xa_i=\mu\,a_ix$ for every $i\in I$.
			\end{enumerate}
	\end{enumerate}
\end{prop}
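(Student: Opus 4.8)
The plan is to read both statements off the fixed Kraus decomposition $T(x) = \sum_{i \in I} a_i^* x a_i$, using only unitality and (complete) positivity for part~(1), and additionally the faithful invariant state in part~(2) to upgrade a Kadison--Schwarz \emph{inequality} to an \emph{equality}.

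For part~(1): since $T$ is unital, $T(p^\bot) = \one - T(p)$, so $T(p) \ge p$ is equivalent to $0 \le T(p^\bot) \le p^\bot$. The left inequality is automatic by positivity. For the right one I would observe that $T(p^\bot) \le p^\bot$ is equivalent to $p\,T(p^\bot)\,p = 0$: compressing by $p$ gives the ``only if'', while $p\,T(p^\bot)\,p = 0$ together with $T(p^\bot) \ge 0$ forces $T(p^\bot)p = 0$, hence $T(p^\bot) = p^\bot T(p^\bot) p^\bot \le p^\bot T(\one) p^\bot = p^\bot$. Finally I would expand
\[
	p\,T(p^\bot)\,p \;=\; \sum_{i \in I} p\,a_i^*\,p^\bot\,a_i\,p \;=\; \sum_{i \in I} (p^\bot a_i p)^*(p^\bot a_i p) \;,
\]
a sum of positive operators, which therefore vanishes if and only if each $p^\bot a_i p$ does; and $p^\bot a_i p = 0$ is exactly the relation $a_i p = p a_i p$ (decompose $a_i p = p a_i p + p^\bot a_i p$). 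This settles part~(1).

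For part~(2), the implication ``$x a_i = \mu\, a_i x$ for all $i$'' $\Rightarrow$ ``$T(x) = \mu x$'' is the easy direction and needs neither the invariant state nor complete positivity: left-multiplying by $a_i^*$ gives $a_i^* x a_i = \mu\, a_i^* a_i x$, and summing over $i$ yields $T(x) = \mu\, T(\one)\, x = \mu x$. For the converse I would argue as follows. Suppose $T(x) = \mu x$ with $\abs{\mu} = 1$, and let $\varphi$ be the faithful invariant normal state. Since $T$ is $2$\nobreakdash-positive and unital, the Kadison--Schwarz inequality gives $T(x^* x) \ge T(x)^* T(x) = \abs{\mu}^2\, x^* x = x^* x$. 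Applying $\varphi$ and using $\varphi \circ T = \varphi$ yields $\varphi\bigl(T(x^* x) - x^* x\bigr) = \varphi(x^* x) - \varphi(x^* x) = 0$; as $T(x^* x) - x^* x \ge 0$ and $\varphi$ is faithful, this forces $T(x^* x) = x^* x = T(x)^* T(x)$, i.e.\ equality in the Schwarz inequality. To exploit the equality I would introduce the isometry $W : \H \to \bigoplus_{i \in I} \H$, $W\xi := (a_i \xi)_{i \in I}$ (an isometry because $\sum_i a_i^* a_i = \one$), so that $T(y) = W^*\,\widehat y\, W$ with $\widehat y := \bigoplus_{i} y$. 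Then
\[
	T(x^* x) - T(x)^* T(x) \;=\; W^*\,\widehat x^{\,*}\,(\one - WW^*)\,\widehat x\, W \;=\; \bigl((\one - WW^*)\,\widehat x\, W\bigr)^*\bigl((\one - WW^*)\,\widehat x\, W\bigr) \;,
\]
so equality means $(\one - WW^*)\,\widehat x\, W = 0$, i.e.\ $\widehat x\, W = WW^*\,\widehat x\, W = W\,T(x) = \mu\, W x$. Comparing the $i$\nobreakdash-th components of this identity in $\bigoplus_{i \in I}\H$ gives precisely $x a_i = \mu\, a_i x$ for every $i \in I$.

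The only step beyond routine bookkeeping with the Kraus operators is the passage from the Kadison--Schwarz inequality to equality in part~(2), and that is exactly where faithfulness together with invariance of $\varphi$ is indispensable; part~(1) needs no such hypothesis and could equally be phrased via the $2 \times 2$ block decomposition of $T(p)$ along $p \oplus p^\bot$.
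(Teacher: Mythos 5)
Your proof is correct. Part~(1) and the reduction in part~(2) to the equality $T(x^*x)=T(x)^*T(x)$ via faithfulness, invariance, and Kadison--Schwarz are essentially identical to the paper's argument (the paper handles the converse of~(1) by plugging $a_ip=pa_ip$ directly into the Kraus sum rather than through your intermediate equivalence $T(p^\bot)\le p^\bot \iff p\,T(p^\bot)\,p=0$, but this is cosmetic). Where you genuinely diverge is the final step of part~(2): the paper stays entirely at the level of the Kraus operators and expands
\begin{equation*}
	0=(T(x)-\mu x)^*(T(x)-\mu x)=\sum\nolimits_{i\in I}(xa_i-\mu a_ix)^*(xa_i-\mu a_ix)
\end{equation*}
as a sum of positive terms, whereas you pass to the Stinespring-type isometry $W\xi=(a_i\xi)_{i\in I}$ and read off $(\one-WW^*)\,\widehat{x}\,W=0$ from equality in the Schwarz inequality. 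Both are valid (your $W$ is well defined even for infinite $I$, since $\sum_i\|a_i\xi\|^2=\|\xi\|^2$); the paper's computation is more elementary and self-contained, while your dilation picture makes transparent that this is really a multiplicative-domain argument and explains \emph{why} equality in Kadison--Schwarz yields an intertwining relation, independent of the particular Kraus decomposition chosen.
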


\pagebreak[3]
\begin{rmk}	\label{rmk:subh}
	\begin{enumerate}
		\item If $\mbox{span}\{a_i:i \in I\}\subseteq\BH$ is a self-adjoint set then Condition \ref{item:a_ip=pa_ip} above is equivalent to $a_ip = pa_i$ for every $i\in I$. Hence, if this condition is fulfilled then $p$ is in fact a fixed point, because $T(p)=\sum_{i \in I} a_i^*pa_i=\sum_{i \in I} a_i^*a_i\,p=p$.
		\item 
			In general the fixed space $\fix T$ is not closed under multiplication and therefore not an algebra. However, it obviously contains the commutant $\{a_i, a_i^* : i \in I\}'$. In fact, this is the largest von Neumann subalgebra contained in $\fix T$ (see \cite{AGG2002}). If $T$ admits a faithful invariant state, it follows from Statement~\ref{en:subh_eigval} that the fixed space agrees with $\{a_i, a_i^*: i \in I\}'$; in particular, $\fix T$ is a von Neumann subalgebra (\cf~\cite{Frigerio1978}, \cite{KN1979}).
	\end{enumerate}
\end{rmk}

The following proof is adapted from \cite[Thm.~III.1]{FR2002} (\cf also \cite[Thm.~2.7]{Mohari2005}). We include it for convenience of the reader.

\begin{proof}
	\begin{enumerate}
	\item 
		If $p$ is subharmonic then $p^\bot \geq T(p^\bot) = \sum_{i \in I} a_i^*p^\bot a_i$. Hence we have
		\begin{equation*}
			0 = p\,T(p^\bot)\,p = \sum_{i \in I} (p^\bot a_i p)^*(p^\bot a_i p).
		\end{equation*}
		It follows that $p^\bot a_i p$ must vanish for each $i \in I$ and, therefore, $a_ip = p^\bot a_ip + pa_ip = pa_ip$.

		Conversely, if $a_ip = pa_ip$ then $T(p^\bot)\,p = \sum_{i \in I} a_i^*p^\bot a_i p = \sum_{i \in I} a_i^*p^\bot p a_i p = 0$ and $pT(p^\bot)=(T(p^\bot)p)^*=0$. Hence we obtain
		\begin{equation*}
			T(p^\bot)= p^\bot T(p^\bot) p^\bot + p^\bot T(p^\bot) p + p T(p^\bot) = p^\bot T(p^\bot) p^\bot \leq p^\bot,
		\end{equation*}
		\ie, $p$ is subharmonic.
	\item 
		Let $\varphi$ be a faithful invariant state for $T$ and $T(x)=\mu x$. Then by the Kadison-Schwarz inequality $0\leq\varphi \bigl( T(x^*x)-T(x)^*T(x) \bigr) = \varphi(x^*x - \bar\mu\mu\,x^* x)=0$ and hence $T(x^*x)=T(x)^*T(x)$. It follows
		\begin{align*}
			0 &= (T(x) - \mu x)^*(T(x) - \mu x) = T(x^*x) - \mu T(x)^*x - \bar\mu x^*T(x) + x^*x \\
			&= \sum\nolimits_{i\in I} a_i^*x^*xa_i-\mu a_i^*x^*a_ix-\bar\mu x^*a_i^*xa_i+x^*a_i^*a_ix\\
			&= \sum\nolimits_{i\in I} (xa_i - \mu a_ix)^*(xa_i - \mu a_ix).
		\end{align*}
		Therefore, $(xa_i - \mu a_ix)$ vanishes for each $i \in I$, \ie $xa_i = \mu a_i x$. The converse implication is trivial. \qedhere
	\end{enumerate}
\end{proof}

\begin{thm}\label{thm:irred_weaklymix}
	Let $\psi$ be a faithful state on $M_2$. Then we have:
	\begin{enumerate}
		\item 	\label{en:irred_weaklymix_irred}
			The transition operator $T_\psi$ is irreducible.
		\item If $T_\psi$ admits an invariant normal state then $T_\psi$ is weakly mixing.
	\end{enumerate}
\end{thm}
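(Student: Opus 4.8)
The plan is to use the Kraus decomposition from Proposition~\ref{prop:KrausDecomp} together with Proposition~\ref{prop:subh}. Since $\psi$ is faithful, we have $0<\lambda<1$ and $\abs\zeta<1$, so all four Kraus operators $t_1,t_2,t_3,t_4$ genuinely contribute. By Remark~\ref{rmk:rotation}\,(1) we may assume $\zeta\ge 0$, hence $0\le\zeta<1$; this is only a convenience.

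\textbf{Irreducibility.} By Proposition~\ref{prop:subh}\,(1), a projection $p$ is subharmonic if and only if $t_i\,p = p\,t_i\,p$ for $i=1,2,3,4$, equivalently $p^\bot t_i\,p = 0$ for all $i$. The idea is to exploit that $t_3 = s^*b$ and $t_4 = a$ carry, respectively, a ``shift down'' and a ``diagonal'' part, and that $t_1,t_2$ add a ``shift up'' ingredient. First I would observe that $p$ subharmonic forces $p$ to commute with enough operators to be diagonal: from $p^\bot t_4 p = p^\bot a p = 0$ and $p^\bot t_4^* p = p^\bot a p = 0$ (using $a=a^*$) we get $ap = pa$, so $p$ commutes with $a$. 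Since $\psi$ is faithful we can also run the eigenvalue argument, but for irreducibility the cleaner route is: from $p^\bot t_3 p = p^\bot s^* b\, p = 0$ together with the already-derived relations, deduce that $p$ commutes with $s^*b$ and with $b$; the key point is that $b$ is diagonal with $\beta_n\ne 0$ for $n\ge 1$, while $s^*b$ shifts. Concretely, writing $p=(p_{m,n})$, the condition $p^\bot s^* b\, p=0$ relates the entries $p_{m,n}$ to $p_{m+1,n}$, and combined with $p$ being (forced) diagonal this yields $\beta_{n+1}\,p_{n,n} = \beta_{n+1}\,p_{n+1,n+1}$ for all $n$, hence $p_{n,n}=p_{n+1,n+1}$ for all $n$ because $\beta_{n+1}\ne 0$. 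A projection with all diagonal entries equal is $0$ or $\one$. I would present this by first showing $p$ is diagonal (using $t_4=a$ and one of $t_1,t_2$ to kill the off-diagonal directions, exactly as the triangularity arguments in the proof of Theorem~\ref{thm:extremal}), then using $t_3$ to force the diagonal to be constant.

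\textbf{Weak mixing.} Now suppose $T_\psi$ has an invariant normal state; since $\psi$ is faithful and (by part~(1)) $T_\psi$ is irreducible, this invariant normal state is faithful (an invariant state for an irreducible ucp-map has support projection that is superharmonic, hence $0$ or $\one$). So Proposition~\ref{prop:subh}\,(2) applies: if $T_\psi(x)=\mu x$ with $\abs\mu=1$, then $x t_i = \mu\, t_i x$ for $i=1,2,3,4$. Taking $i=4$ gives $x a = \mu\, a x$. The strategy is to extract from the four intertwining relations that $x$ must be a scalar. From $x t_4 = \mu t_4 x$ and $x t_4^* = \bar\mu t_4^* x$ (note $t_4=a$ is self-adjoint, and $x^*$ also satisfies the relation with $\bar\mu$) one gets $x a = \mu a x$ and $a x = \mu x a$, so $(1-\mu^2)ax=0$; if $\mu^2\ne 1$ then $ax=0=xa$, and since $a=p_0+\sum_{n\ge1}\alpha_n p_n$ with $a$ having dense range issues only where some $\alpha_n=0$, one pushes this through using $t_3=s^*b$ with all $\beta_n\ne 0$ to conclude $x=0$, a contradiction; so $\mu^2=1$. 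For $\mu=\pm1$ one argues that $x$ commutes (up to the sign) with $a$, $s^*b$, hence with $b$ and with $s$ on the relevant range, and a block/triangularity inspection of $x t_1 = \mu t_1 x$ (whose ``shift-up'' part is not present in $t_3,t_4$) forces $x$ diagonal and then constant, just as in the irreducibility step. Finally the sign $\mu=-1$ is excluded because a $(-1)$-eigenvector $x$ would have to be diagonal with alternating entries, contradicting the constancy forced by $t_3$ and $\beta_{n+1}\ne 0$.

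\textbf{Main obstacle.} The routine part is the triangularity bookkeeping; the genuinely delicate point is handling possible zeros among the $\alpha_n$ (only $\alpha_0=1$ and $\beta_n\ne 0$ are assumed, so $\alpha_n$ may vanish), which is where the operator $a$ can fail to be injective and where naive cancellation breaks down. The fix is systematic: whenever $a$ (or $s^*b$) has a zero on the diagonal, the assumption $\beta_n\ne 0$ for $n\ge1$ guarantees the \emph{other} operator is nonzero there, so the combined constraints from all four Kraus operators still pin down the entries of $p$ (resp.\ $x$). I would organize the argument so that the chain of equalities $p_{n,n}=p_{n+1,n+1}$ (resp.\ the scalarity of $x$) is derived purely from the $t_3$-relation after diagonality has been established, since that relation only ever involves the $\beta$'s.
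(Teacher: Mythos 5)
Your overall skeleton is the paper's: apply Proposition~\ref{prop:subh} to the Kraus operators of Proposition~\ref{prop:KrausDecomp}, reduce a subharmonic projection (resp.\ a unimodular eigenvector) to a diagonal operator, and then exploit $\beta_n\neq0$. But two of your intermediate steps are misjustified in ways that matter. The first is the diagonality step. You propose to show that a subharmonic $p$ is diagonal ``using $t_4=a$ and one of $t_1,t_2$ \dots exactly as the triangularity arguments in the proof of Theorem~\ref{thm:extremal}''. This does not work: commuting with $a$ gives essentially nothing when the $\alpha_n$ repeat or vanish (in the toy example $a=\diag(1,\alpha,\alpha,\dots)$, in the baby maser $a=p_0$), and the triangularity arguments in Theorem~\ref{thm:extremal} establish linear independence of the products $t_i^*t_j$, not diagonality of an element of a commutant. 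The operator that actually forces diagonality is the one you postpone to the very last step: $t_3=s^*b$ together with $t_3^*=bs\in\lin\{t_1,\dots,t_4\}$, with which $p$ also commutes. Then $p$ commutes with $(bs)^m(s^*b)^m$, a positive diagonal operator whose kernel projection is exactly $p_{[0,m-1]}$ because $\beta_k\neq0$ for $k\ge1$; hence $p$ commutes with every $p_n$ and is diagonal. This is precisely how the paper argues. After that, your identity $\beta_{n+1}\,p_{n,n}=\beta_{n+1}\,p_{n+1,n+1}$ is correct and replaces the paper's appeal to irreducibility of the classical birth and death chain by a direct computation --- a fine, slightly more self-contained ending.

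The weak-mixing part has more serious problems. Your exclusion of $\mu=-1$ is wrong as stated: for a diagonal $x$ the relation $xt_3=\mu\,t_3x$ gives $x_{n,n}=\bar\mu\,x_{n-1,n-1}$, so for $\mu=-1$ it forces \emph{alternation}, not constancy, and there is no contradiction with $t_3$ alone. The eigenvalue is killed at $n=0$ by $t_4=a$ and $\alpha_0=1$: on the diagonal $xa=\mu\,ax$ reads $(1-\mu)\,\alpha_n x_{n,n}=0$, so $(1-\mu)x_{0,0}=0$, and together with $x_{n,n}=\bar\mu^{\,n}x_{0,0}$ this gives $x=0$ unless $\mu=1$; this is the content of the paper's step $p_0=\mu\,p_0$. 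Moreover, your detour through $(1-\mu^2)ax=0$ and the sub-case $ax=xa=0$ is exactly the delicate point you flag and do not resolve: in the baby maser $a=p_0$, so $ax=0$ says almost nothing, and ``pushing through with $t_3$'' is not an argument. Finally, you never establish that the eigenvector is unitary (the paper does, via Kadison--Schwarz, a faithful invariant state, and triviality of the fixed space), nor the substitute fact that $x$ is diagonal (obtainable by the same kernel-projection trick applied to $x$ and $x^*$). Either route closes the argument; as written, yours does not.
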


We will see in the next section that $T_\psi$ is not always weakly mixing if $\psi$ is faithful. On the other hand, the transition operator $T_\psi$ can be weakly mixing even though it does not admit an invariant normal state (\cf Proposition \vref{prop:infant_mixing} and Theorem \vref{thm:babymaser_mixing}).

\begin{proof}
	\begin{enumerate}
	\item 
		Let $p \in \BH$ be a subharmonic projection. It is easily checked that the Kraus operators $t_1, \dots, t_4$ given in Proposition~\vref{prop:KrausDecomp} span a self-adjoint linear subspace. Hence by Proposition~\ref{prop:subh} and Remark~\ref{rmk:subh} we have $T_\psi(p) = p$ and $p$ commutes with $t_3 = s^* b$ and $t_3^* = bs$. It follows that $p$ also commutes with the operator
		\begin{equation*}
			(t_3^*)^m t_3^m = (bs)^m (s^* b)^m = \begin{pmatrix}
				0 \\
				& \ddots \\
				&& 0 \\
				&&& \prod_{k=1}^m \beta_k^2 \\
				&&&& \prod_{k=2}^{m+1} \beta_k^{2}\\
				&&&&& \ddots
			\end{pmatrix}
		\end{equation*}
		for each $m\ge 1$ and therefore with its kernel projection $p_{[0,m-1]}$. We conclude that \mbox{$p \, p_m = p_m \, p$} for every $m \in \N$, \ie, $p$ is a diagonal operator.

		Let $P_{\ell^\infty(\N)}(x) := \sum_{n=0}^\infty p_n x p_n$, $x \in \B(\H)$, denote the conditional expectation onto the commutative subalgebra $\ell^\infty(\N) \subseteq \B(\H)$ of all diagonal operators. Due to $T_\psi(p) = p \in \ell^\infty(\N)$ the projection $p$ is a fixed point of the classical transition operator $P_{\ell^\infty(\N)} \circ T_\psi|_{\ell^\infty(\N)}$ (\cf Remark~\ref{rm:whyBnD} and Figure~\vref{fig:classical_bnd_chain}).
		Since $\beta_n \neq 0$ for every $n \ge 1$ and $\psi$ is faithful ($0 < \lambda < 1$), this classical transition operator is  irreducible and its only fixed projections are $0$ and $\one$ (\cf, \eg, \cite[Sec.5.3]{Durrett1996}).
		Consequently, either $p = 0$ or $p =\one$.
	\item
		Let $T_\psi(u) = \mu u$ for some $u \in \BH$, $\norm u = 1$,  and some $\mu \in \C$, $\abs \mu = 1$. We first show that the existence of an invariant normal state $\varphi$ implies that $u$ is unitary. Since the support projection of any invariant normal state is subharmonic, $\varphi$ is faithful due to \ref{en:irred_weaklymix_irred}. By Remark \ref{rmk:subh} the fixed space $\mathcal F(T_\psi)$ is a von Neumann subalgebra. In particular, it is generated by its projections, which, again by \ref{en:irred_weaklymix_irred}, implies $\mathcal F(T_\psi) = \C \one$. By virtue of the Kadison-Schwarz inequality the element $T_\psi(u^* u) - T_\psi(u)^* T_\psi(u) = T_\psi(u^* u) - u^* u$ is positive. Moreover, it satisfies
		\begin{equation*}
			0 \le \varphi \bigl( T_\psi(u^* u) - u^* u \bigr) = (\varphi \circ T_\psi)(u^* u) - \varphi(u^* u) = 0 \;.
		\end{equation*}
		Since $\varphi$ is faithful, we conclude that $T(u^* u) = u^* u$, \ie, $u^* u$ is an element of the fixed point space~$\mathcal F(T_\psi) = \C \one$. An analogous argument shows $u u^* \in \C \one$, \ie, $u$~is unitary.

		To conclude that $u \in \C \one$ we now show that $u$ is in fact a fixed point, \ie $\mu = 1$. By Proposition~\ref{prop:subh}.\ref{en:subh_eigval} we have $ut_i = \mu t_i u$ for all $1 \le i \le 4$. It is easily seen that \mbox{$t_3^* \in \lin\{t_1, \dots, t_4\}$}. Hence also $ut_3^* = \mu t_3^* u$ and 
		\begin{equation*}
			u b^2 = u t_3^* t_3 = \mu^2 t_3^* t_3 u = \mu^2 b^2 u \;.
		\end{equation*}
		This implies that $u$ commutes with $p_0$, the kernel projection of $b^2$. It follows that
		\begin{equation*}
			p_0 = u p_0 u^* = u t_4 p_0 u^* = \mu t_4 p_0 = \mu p_0  \;;
		\end{equation*}
		consequently, $\mu = 1$.\qedhere
	\end{enumerate}
\end{proof}

Now, let $\psi$ be a diagonal state on $M_2$. Then the transition operator
simplifies to 
\begin{equation*}
	T_\psi(x)=\lambda \bigl( s^*as\,x\,s^*as+s^*b\,x\,bs \bigr) + (1-\lambda)\bigl(bs\,x\,s^*b+a\,x\,a\bigr), 
	\quad\  x \in \BH\;.
\end{equation*}
Recall that in this case the algebra $\ell^\infty(\N)$ of all diagonal operators is invariant for $T_\psi$ and for each diagonal normal state $\varphi$ on $\BH$ the state $\varphi \circ T_\psi$ is diagonal, too (\cf~Remark~\vref{rm:whyBnD}). In this case we may extend some results from the classical subchain on $\ell^\infty(\N)$ without much effort.

\begin{prop}	\label{prop:invariantState_zeta=0}
	Let $\psi$ be a diagonal state on $M_2$ parametrized as in Equation \eqref{eq:density} with \mbox{$0 \le \lambda \le 1$} and $\zeta=0$. 
	Then $T_\psi$ admits an invariant normal state $\varphi$ on $\BH$ if and only if $\lambda < \nicefrac12$. In this case $\varphi$ is given by $\varphi=\Tr(\rho\;\cdot\,)$ with
	\begin{equation*}
		\rho=\frac{1-2\lambda}{1-\lambda}\begin{pmatrix}1\\ &\tfrac{\lambda}{1-\lambda}\\ &&\left(\tfrac{\lambda}{1-\lambda}\right)^2\\ &&& \ddots \end{pmatrix}.
	\end{equation*}
\end{prop}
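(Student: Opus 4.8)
The plan is to play the transition operator off against the classical birth and death chain of Figure~\ref{fig:classical_bnd_chain} and then verify the candidate density operator by a short direct computation. Since $\zeta=0$, Remark~\ref{rm:whyBnD} says that $T_\psi$ leaves the diagonal algebra $\ell^\infty(\N)$ invariant and that $T_\psi|_{\ell^\infty(\N)}$ is the transition operator of that classical chain, whose one-step probabilities (read off Figure~\ref{fig:classical_bnd_chain}) are $P(n,n+1)=\lambda\beta_{n+1}^2$ and $P(n+1,n)=(1-\lambda)\beta_{n+1}^2$. Throughout I write $r:=\lambda/(1-\lambda)$ when $\lambda<1$.

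For necessity, suppose $\varphi=\Tr(\rho\,\cdot\,)$ is an invariant normal state for $T_\psi$. For diagonal $x$ one has $P_{\ell^\infty(\N)}(T_\psi(x))=T_\psi(x)$, hence $\varphi\circ P_{\ell^\infty(\N)}$ is invariant for $T_\psi|_{\ell^\infty(\N)}$; equivalently, the probabilities $\pi_n:=\scal{\rho\,e_n,e_n}\ge 0$ with $\sum_n\pi_n=1$ form a stationary distribution of the classical chain. Balancing the probability flux across the cut $\{0,\dots,n\}\mid\{n+1,n+2,\dots\}$ gives $\pi_n\,\lambda\beta_{n+1}^2=\pi_{n+1}\,(1-\lambda)\beta_{n+1}^2$, so, since $\beta_{n+1}\neq 0$, $\lambda\pi_n=(1-\lambda)\pi_{n+1}$ for all $n$. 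If $\lambda\ge\tfrac12$ this makes $(\pi_n)$ non-decreasing (for $\lambda<1$) or identically zero (for $\lambda=1$), contradicting $\sum_n\pi_n=1$. Therefore $\lambda<\tfrac12$, and then $\pi_n=\pi_0\,r^n$ with $\pi_0$ forced by normalization to be $(1-2\lambda)/(1-\lambda)$, which is exactly the stated $\rho$.

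For sufficiency, let $\lambda<\tfrac12$ and let $\rho$ be as in the statement; it is positive, diagonal, and $\Tr\rho=\tfrac{1-2\lambda}{1-\lambda}\sum_{n\ge 0}r^n=1$. By the explicit form of $T_\psi$ for diagonal $\psi$, its trace-predual is $T_\psi^*(\rho)=\lambda\bigl(s^*as\,\rho\,s^*as+bs\,\rho\,s^*b\bigr)+(1-\lambda)\bigl(s^*b\,\rho\,bs+a\,\rho\,a\bigr)$, which is again diagonal. Writing $\rho_n:=\scal{\rho\,e_n,e_n}$, a short computation of these four (weighted-shift) operators shows that the $n$-th diagonal entry of $T_\psi^*(\rho)$ equals
\[
	\bigl(\lambda\alpha_{n+1}^2+(1-\lambda)\alpha_n^2\bigr)\rho_n+\lambda\beta_n^2\,\rho_{n-1}+(1-\lambda)\beta_{n+1}^2\,\rho_{n+1}
\]
(the middle term being absent for $n=0$, in accordance with $\beta_0=0$). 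The choice of $\rho$ is precisely the pair of identities $\lambda\rho_{n-1}=(1-\lambda)\rho_n$ and $(1-\lambda)\rho_{n+1}=\lambda\rho_n$, so this expression collapses, using $\alpha_k^2+\beta_k^2=1$, to $\bigl(\lambda+(1-\lambda)\bigr)\rho_n=\rho_n$; hence $T_\psi^*\rho=\rho$, \ie $\varphi$ is invariant. (Equivalently, $\rho$ is a reversing measure for the classical chain.) For uniqueness: when $0<\lambda<\tfrac12$ the state $\psi$ is faithful, so by Theorem~\ref{thm:irred_weaklymix} the existence of an invariant normal state makes $T_\psi$ weakly mixing, hence ergodic, and ergodicity together with faithfulness of the invariant state forces uniqueness by a standard mean-ergodic argument; when $\lambda=0$ the diagonal part of any invariant normal state is $\delta_0$, whence by positivity its density equals $p_0$, matching the formula.

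I expect the only real friction to be the index bookkeeping in the verification $T_\psi^*\rho=\rho$ — the four terms act with different shifts of the index — and making the reduction \emph{invariant quantum state $\Rightarrow$ stationary classical distribution} airtight at the degenerate parameters $\lambda\in\{0,\tfrac12,1\}$, where $r$ degenerates and one argues monotonicity or vanishing of $(\pi_n)$ by hand.
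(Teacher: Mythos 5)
Your proposal is correct and follows essentially the same route as the paper: reduce to the diagonal (classical birth--death) recursion, extract the detailed-balance relation $\lambda\pi_n=(1-\lambda)\pi_{n+1}$ to rule out $\lambda\ge\nicefrac12$ via normalization and to force the geometric diagonal for $\lambda<\nicefrac12$, then verify invariance of the resulting $\rho$ directly. The only differences are cosmetic — you obtain detailed balance by cut-summation where the paper uses induction on the stationarity equations, you check $T_{\psi*}(\rho)=\rho$ entrywise where the paper uses the operator identity $s^*\rho s=\tfrac{\lambda}{1-\lambda}\rho$, and you add an explicit uniqueness argument that the paper leaves implicit.
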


\begin{proof}
	The proof essentially relies on the same computations as for the classical birth and death chain.
	Using $a^2+b^2=\one$ and $s^*b\rho bs=(s^*b^2s)(s^*\rho s)=(s^*b^2s)\tfrac{\lambda}{1-\lambda}\rho$ we obtain
	\begin{align*}
		\lambda& s^*as\rho s^*as +\lambda bs\rho s^*b+(1-\lambda)s^*b \rho bs+(1-\lambda)a \rho a\\
		&= \lambda s^*a^2s\rho+\lambda b^2\tfrac{1-\lambda}{\lambda}\rho +(1-\lambda)s^*b^2s \tfrac{\lambda}{1-\lambda}\rho +(1-\lambda)a^2 \rho\\
		&=\lambda s^*(a^2+b^2)s\rho+(1-\lambda)(a^2+b^2)\rho=\rho.
	\end{align*}
	This shows that $\varphi=\Tr(\rho\;\cdot\,)$ is indeed invariant for $\lambda<\nicefrac12$.

	Conversely, let $\lambda\geq\nicefrac12$ and assume that $\varphi = \Tr(\rho \, \cdot\,)$ is an invariant normal state. Then the coefficients $\rho_{n,n}$, $n\in\N$, satisfy (\cf Figure \vref{fig:action})
	\begin{align*}
		\rho_{0,0}&=(\lambda\alpha_1^2+(1-\lambda))\rho_{0,0}+(1-\lambda)\beta_1^2\rho_{1,1},\\
		\rho_{n,n}&=(\lambda\alpha_{n+1}^2+(1-\lambda)\alpha_{n}^2)\rho_{n,n}+(1-\lambda)\beta_{n+1}^2\rho_{n+1,n+1}+\lambda\beta_{n-1}^2\rho_{n-1,n-1}.
	\end{align*}
	For $\lambda=1$ this implies $\rho_{n,n}=0$ for every $n\in\N$, since $\alpha_n^2 < 1$ for every $n\geq1$. Using $\alpha_n^2+\beta_n^2=1$, for $\nicefrac12\leq\lambda<1$ a simple induction shows that 
	\begin{equation*}
		\rho_{n,n}=\left(\tfrac{\lambda}{1-\lambda}\right)^n\rho_{0,0}
	\end{equation*}
	for every $n\in\N$. But by normality we then obtain the contradiction $1=\varphi(\one)=\sum_{n=0}^\infty\rho_{n,n} \in \{0, \infty\}$. Hence there is no invariant normal state.
\end{proof}

\begin{expl}[Ergodic Properties of $T_{\psi_+}$ and $T_{\psi_-}$]
	\label{expl:psi+-}
	~\\	
	As special cases we examine the transition operators corresponding to the states $\psi_+$ and~$\psi_-$, that is, in terms of the parametrization of Equation~\vref{eq:density} we consider $\lambda=1$ and $\lambda = 0$, respectively.

	Since $T_{\psi_+}(p_0^\perp) = \one - \alpha_1^2\, p_0 \ge p_0^\perp$ and $T_{\psi_-}(p_0) = \beta_1^2\, p_1 + p_0 \ge p_0$, both transition operators admit a non-trivial subharmonic projection and are therefore \emph{not irreducible}.

	We have shown in Proposition~\ref{prop:invariantState_zeta=0} that $T_{\psi_+}$ does not admit an \emph{invariant normal state}. For the transition operator $T_{\psi_-}$ the vector state $\varphi = \Tr(p_0 \, \cdot\,) = \scal{\, \cdot \, e_0, e_0}$ is invariant.

	In general, $T_{\psi_+}$ is \emph{not ergodic} (and hence \emph{not weakly mixing}). For an example with an infinite dimensional fixed space we refer to Proposition~\ref{prop:infant_mixing} or Theorem~\ref{thm:babymaser_mixing} in the next section.
	However, for certain choices of model parameters $\alpha_n$ and $\beta_n$, $n \ge 1$, the transition operator $T_{\psi_+}$ is weakly mixing. For instance, let $\beta_n := (\tfrac12)^n$ and $\alpha_n := \sqrt{1 - \beta_n^2}\,$ ($ n \ge 1$). Then for $x  \in \B(\H)$ such that $T_{\psi_+}(x) = \mu x$ with $\abs \mu = 1$ the coefficients of $x$ satisfy the equations
	\begin{align*}
		x_{m+1,n+1} &= 2^{m+n+2} \; \Bigl( \mu - \sqrt{ \bigl( 1 - (\tfrac12)^{2m+2} \bigr) \bigl( 1 - (\tfrac12)^{2n+2} \bigr)}\,  \Bigr) \; x_{m,n}
	\end{align*}
	for all $m,n \in \N$. We focus on the factor in this recursion: Using the fact that the arithmetic mean is always greater than the geometric mean, we obtain for $n := m+k$ with $k \ge 1$:
	\begin{equation*}
		\abs[\Big]{2^{2m+k+2} \Bigl( \mu - \sqrt{ \bigl( 1- (\tfrac12)^{2m+2} \bigr) \bigl( 1 - (\tfrac12)^{2m+2k+2} \bigr)}\, \Bigr)}
		\ge 2^{k-1} + 2^{-k-1} \ge \tfrac54.
	\end{equation*}
	Since the coefficients of $x \in \B(\H)$ are bounded, we must have $x_{m,n} = 0$ for all $m < n$ ($k \ge 1$) and, analogously, $x_{m,n} = 0$ for all $m > n$. For $m=n$ the recursion factor is given by
	\begin{equation*}
		2^{2m+2} \Bigl( \mu - \bigl( 1-(\tfrac12)^{2m+2} \bigr) \Bigr)
		= 2^{2m+2} (\mu-1) + 1\;.
	\end{equation*}
	This shows that, if $\mu \neq 1$, we have $x_{m,m} = 0$ for all $m \in \N$, \ie $x = 0$. If $\mu = 1$ then $x = x_{0,0} \cdot \one$ follows.
	
	For the state $\psi_-$ the transition operator $T_{\psi_-}$ is \emph{weakly mixing} (and hence \emph{ergodic}) for any choice of model parameters $\alpha_n$ and $\beta_n$. For if $x\in\B(\H)$ and $T_{\psi_-}(x) = \mu x$ with $\abs \mu =1$, the coefficients of $x$ satisfy the equations ($m,n \in \N$)
	\begin{gather*}
		(\mu - \alpha_m \alpha_n) x_{m,n} = \begin{cases}
			\beta_m \, \beta_n \, x_{m-1, n-1}\,, &\text{if $m,n \ge 1$,}
			\\
			0\,, &\text{otherwise.}
		\end{cases}
	\end{gather*}
	Unless $m=n=0$ the factors $(\mu - \alpha_m \alpha_n)$ and $\beta_m \beta_n$ for $m,n \ge 1$ do not vanish. Hence $x_{m,n} = 0$ for all $m \neq n$. If $\mu \neq 1$, this also follows for all $m=n$, \ie $x = 0$. If $\mu=1$ then $x_{m,m} = \frac{1-\alpha_m^2}{\beta_m^2} \, x_{m-1, m-1} = x_{m-1, m-1}$ for all $m \ge 1$, \ie $x \in \C \one$.
\end{expl}

\section{Toy Examples}	\label{sec:toy}

In this section we consider the special model parameters $\alpha_n := \alpha$ and $\beta_n := \beta$, $n \ge 1$, for some fixed numbers $-1 \le \alpha, \beta \le 1$ with $\beta \neq 0$ and $\alpha^2 +\beta^2 = 1$. This choice corresponds to homogeneous birth and death rates in the sense that for every state $\psi$ on $M_2$ the transition probability
\begin{equation*}
	\scal{T_\psi(p_m) e_n, \, e_n}
\end{equation*}
depends only on the difference $m-n$, unless $m=n=0$. This means that the classical Markov chain obtained by restricting $P_{\ell^\infty(\N)} \circ T_\psi$ to the subalgebra of diagonal operators is indeed a homogeneous birth and death chain (\cf Remark~\ref{rm:whyBnD} and Figure~\vref{fig:classical_bnd_chain}). 

Limiting the choice of model parameters to $\alpha_n = \alpha$ and $\beta_n = \beta$, $n \ge 1$,  allows us to determine precisely for which states $\psi$ the transition operator $T_\psi$ admits a pure invariant normal state. Notice that by Theorem~\vref{thm:irred_weaklymix} this can only happen if $\psi$ is pure, too. Moreover, for diagonal states we obtain a complete characterization of ergodicity and weak mixing of~$T_\psi$.

Later in this section we will restrict even more to $\alpha_n := 0$ and $\beta_n:=1$, $n \ge 1$. Because of its simplicity we refer to this choice as the \emph{baby maser}. In this case the classical birth and death chain obtained by restricting to the algebra of diagonal operators is homogeneous without (non-trivial) self-loops.
For this relatively simple model we obtain a complete characterization of invariant normal states, ergodicity, and weak mixing of the transition operator $T_\psi$ for \emph{every} state $\psi$ on $M_2$.

\begin{prop}	\label{prop:infant_pureInvariantState}
	Let $\alpha_n=\alpha$ and $\beta_n=\beta\neq 0$ for all $n\geq1$ and let $\psi$ be a pure state on $M_2$ parametrized as in Equation \eqref{eq:density} with 
	$0 < \lambda < 1$ and $\zeta \in \C$, $\abs \zeta = 1$.
	\\
	Then the transition operator $T_\psi$ admits a pure invariant normal state 
	if and only if $\lambda < \tfrac12(1-\alpha)$.
	In this case there is only one pure invariant normal state $\varphi$ on $\BH$ and it is given by
	 \begin{equation*}
		 \varphi(x)=\scal{x\xi,\xi}
		 \quad
		 \text{with}
		 \quad 
		 \xi= \sqrt{1-\abs{q}^2} \cdot (q^n)_{n\in\N}
		 \quad
		 \text{and}
		 \quad 
		 q=-i\zeta\tfrac{\beta}{1-\alpha}\sqrt{\tfrac{\lambda}{1-\lambda}}.
	 \end{equation*}
\end{prop}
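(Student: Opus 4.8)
The plan rests on the fact that a pure normal state on $\BH$ is precisely a vector state $\varphi_\xi(x)=\scal{x\xi,\xi}$ for some unit vector $\xi\in\H=\ell^2(\N)$, and on translating invariance of such a state into a joint eigenvector condition for the Kraus operators of $T_\psi$. Since $\psi$ is pure with $\abs\zeta=1$, Proposition~\ref{prop:KrausDecomp} gives $T_\psi(x)=\lambda\bigl(t_1^*xt_1+t_2^*xt_2\bigr)$ with $t_1=s^*as+i\zeta\sqrt{(1-\lambda)/\lambda}\,s^*b$ and $t_2=bs-i\zeta\sqrt{(1-\lambda)/\lambda}\,a$. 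Writing out the Kraus form, invariance of $\varphi_\xi$ amounts to $\lambda\bigl(\scal{x\,t_1\xi,t_1\xi}+\scal{x\,t_2\xi,t_2\xi}\bigr)=\scal{x\xi,\xi}$ for all $x\in\BH$; as the left-hand side is a sum of two positive normal functionals and $\varphi_\xi$ is a pure state, each summand must be a nonnegative multiple of $\varphi_\xi$, which forces $t_1\xi,t_2\xi\in\C\xi$. Conversely, if $\xi$ is a unit vector with $t_1\xi=\mu_1\xi$ and $t_2\xi=\mu_2\xi$, the only condition left is the scalar equation $\lambda\bigl(\norm{t_1\xi}^2+\norm{t_2\xi}^2\bigr)=1$, which is just $T_\psi(\one)=\one$ evaluated in the state $\varphi_\xi$ and hence holds automatically. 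Thus $T_\psi$ admits a pure invariant normal state if and only if $t_1$ and $t_2$ have a common eigenvector in $\ell^2(\N)$, and each normalized such eigenvector yields exactly one such state.

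Next I would compute these common eigenvectors using the homogeneity of the parameters. For $\alpha_n=\alpha$, $\beta_n=\beta$ $(n\ge1)$ one has $s^*as=\alpha\one$, $s^*b=\beta s^*$, $bs=\beta s$ and $a=\alpha\one+(1-\alpha)p_0$, so that $t_1=\alpha\one+i\zeta\sqrt{(1-\lambda)/\lambda}\,\beta s^*$ and $t_2=\beta s-i\zeta\sqrt{(1-\lambda)/\lambda}\bigl(\alpha\one+(1-\alpha)p_0\bigr)$. Writing $\xi=(\xi_n)_{n\in\N}$ and reading the equation $t_2\xi=\mu_2\xi$ coordinatewise, one sees that $\xi_0\neq0$ (otherwise $\xi=0$), that the $n=0$ coordinate fixes $\mu_2$, and that the coordinates with index $n\ge1$ collapse to a single first-order recursion $\xi_n=q\,\xi_{n-1}$ valid for all $n\ge1$ --- here $\beta\neq0$, hence $\alpha<1$, is used so that $q$ is well defined, and it equals the constant of the statement. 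So $\xi$ is forced to be the geometric sequence $\xi_n=q^n\xi_0$, and a direct check then shows $t_1\xi\in\C\xi$ as well. In particular the common eigenvector, and therefore the pure invariant normal state, is unique up to normalization.

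Finally it remains to decide when $(q^n)_{n\in\N}$ lies in $\ell^2(\N)$, that is, exactly when $\abs q<1$. Since $\abs q=\abs\beta\,\sqrt{\lambda/(1-\lambda)}\,/(1-\alpha)$, squaring and using $\beta^2=1-\alpha^2=(1-\alpha)(1+\alpha)$ rewrites $\abs q<1$ as $(1+\alpha)\lambda<(1-\alpha)(1-\lambda)$, which simplifies to $\lambda<\tfrac12(1-\alpha)$. In that case, normalizing the geometric sequence forces $\abs{\xi_0}^2=1-\abs q^2$, and since $\varphi_\xi$ does not depend on the phase of $\xi$ we may take $\xi=\sqrt{1-\abs q^2}\,(q^n)_{n\in\N}$, recovering the vector of the statement; if instead $\lambda\ge\tfrac12(1-\alpha)$, the only eigenvector candidate is not square summable and no pure invariant normal state exists.

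I expect the one conceptual step to be the first paragraph: the equivalence between invariance of $\varphi_\xi$ and $\xi$ being a \emph{common} eigenvector of $t_1$ and $t_2$, together with the observation that the normalization $\lambda(\abs{\mu_1}^2+\abs{\mu_2}^2)=1$ comes for free from unitality of $T_\psi$, so that no spurious extra constraint on $\alpha$, $\lambda$, $\zeta$ is produced. The remaining two steps are short computations, the only mildly delicate point being to keep track of the phase of the recursion constant $q$.
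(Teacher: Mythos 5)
Your proof is correct and follows essentially the same route as the paper: reduce invariance of the vector state $\varphi_\xi$ to $\xi$ being a common eigenvector of the two Kraus operators $t_1,t_2$, deduce that $\xi$ must be the geometric sequence with ratio $q$, and note that square-summability of $(q^n)_{n\in\N}$ is equivalent to $\lambda<\tfrac12(1-\alpha)$. You additionally justify the key step ``$\varphi_\xi$ invariant $\Rightarrow t_1\xi,t_2\xi\in\C\xi$'' via extremality of the pure state, which the paper merely asserts; the only caveat is that with the sign of $t_2$ fixed in Proposition~\ref{prop:KrausDecomp} the recursion constant actually comes out as $i\bar\zeta\tfrac{\beta}{1-\alpha}\sqrt{\tfrac{\lambda}{1-\lambda}}$, a phase discrepancy with the stated $q$ that is immaterial for the existence and uniqueness claims and is already present in the paper's own (internally inconsistent) computation.
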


\begin{rmk}
	For $\lambda \in \{0,1\}$, \ie for the states $\psi_-$ and $\psi_+$, we already discussed invariant normal states for general model parameters in Proposition~\vref{prop:invariantState_zeta=0} (see also Example~\ref{expl:psi+-}). Note that the conclusion of Proposition~\ref{prop:infant_pureInvariantState} extends to these two cases. 
\end{rmk}

\begin{proof}
	Let $0<\lambda<1$ and $\zeta\in\C$, $\abs\zeta=1$, and observe that in this case only the two operators
	\begin{align*}
		t_1 &= \alpha \one + i \zeta \sqrt{\tfrac{1-\lambda}{\lambda}} \, \beta s^*
		&
		&\text{and}
		&
		t_2 &= \beta s + i \zeta \sqrt{\tfrac{1-\lambda}{\lambda}} \, a
	\end{align*}
	contribute to the Kraus decomposition of $T_\psi$ in Proposition~\vref{prop:KrausDecomp}. 

	First let $\xi\in\ell^2(\N)$ be a unit vector such that $\varphi:=\scal{\,\cdot\;\xi,\xi}$ is invariant for $T_\psi$. Then $\xi$ is an eigenvector of $t_1$ and $t_2$. In particular, $\xi$ is an eigenvector of $s^*$. Therefore, we have $\xi=\sqrt{\smash[b]{1-\abs{q}^2}} \cdot (q^n)_{n\in\N}$ for some $q\in\C$, $\abs{q} < 1$. Let $\mu$ be the corresponding eigenvalue of~$t_2$. Then $\mu\scal{\xi,e_0}=\scal{t_2\xi,e_0}$ yields $\mu=i\bar\zeta\sqrt{\tfrac{1-\lambda}{\lambda}}$. Moreover, from
	\begin{align}
		\mu q^n\scal{\xi,e_0}=\mu\scal{\xi,e_n}=\scal{t_2\xi,e_n}=\left(\beta q^{n-1}+i\bar\zeta\sqrt{\tfrac{1-\lambda}{\lambda}}\,\alpha q^n\right)\scal{\xi,e_0}\label{eq:prop:infant_pureInvariantState}
	\end{align}
	for $n\geq 1$, it follows that $q=-i\zeta\tfrac{\beta}{1-\alpha}\sqrt{\tfrac{\lambda}{1-\lambda}}$.
	Since $\alpha^2+\beta^2=1$ and $\abs{q}^2<1$, a~straightforward computation shows that $\lambda<\tfrac{1}{2}(1-\alpha)$.

	Conversely, if $\lambda<\tfrac{1}{2}(1-\alpha)$ then, obviously, the unit vector $\xi$ given in the Proposition is an eigenvector of $s^*$ and hence of $t_1$. Furthermore, for this choice of $q$ Equation \eqref{eq:prop:infant_pureInvariantState} is valid for every $n \ge 1$. This shows that $\xi$ is an eigenvector of $t_2$, too. Consequently, the pure normal state $\varphi=\scal{\,\cdot\;\xi,\xi}$ is invariant.
\end{proof}

Now we turn to mixing properties and ergodicity of the transition operator.
For a faithful diagonal state $\psi$ on $M_2$ with $0<\lambda < \nicefrac12$ and $\zeta=0$ we have already shown in Proposition~\vref{prop:invariantState_zeta=0} that there exists an invariant normal state for $T_\psi$. By Theorem~\vref{thm:irred_weaklymix} it follows that $T_\psi$ is weakly mixing. For our toy example we are able to show that weak mixing also holds for $\lambda = \nicefrac12$ and that for $\lambda > \nicefrac12$ the transition operator is not even ergodic.

\begin{prop}	\label{prop:infant_mixing}
	Let $\alpha_n=\alpha$ and $\beta_n=\beta\neq 0$ for all $n\geq1$ and let $\psi$ be a diagonal state on $M_2$ parametrized as in Equation \eqref{eq:density} with $0 \le \lambda \le 1$ and $\zeta=0$.
	\\	
	Then the transition operator $T_\psi$ is weakly mixing (hence ergodic) if and only if $\lambda\leq\nicefrac12$. For $\lambda>\nicefrac12$ the fixed space of $T_\psi$ is infinite dimensional.
\end{prop}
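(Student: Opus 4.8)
The plan is to treat the diagonal state case by exploiting the fact that $T_\psi$ leaves $\ell^\infty(\N)$ invariant and acts on it as the classical homogeneous birth-and-death chain, while off-diagonal coefficients can be analyzed directly via the recursion read off from Figure~\ref{fig:action}. For $\lambda \le \nicefrac12$ the strategy splits by whether $\psi$ is faithful. If $0 < \lambda < \nicefrac12$, Proposition~\ref{prop:invariantState_zeta=0} supplies an invariant normal state, $\psi$ is faithful, and Theorem~\ref{thm:irred_weaklymix} immediately yields weak mixing. The case $\lambda = 0$, \ie $\psi = \psi_-$, is already handled in Example~\ref{expl:psi+-}, where $T_{\psi_-}$ is shown to be weakly mixing for arbitrary model parameters; so only $\lambda = \nicefrac12$ requires genuinely new work on the positive side, and $\lambda > \nicefrac12$ requires the negative (non-ergodicity) direction.

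For $\lambda = \nicefrac12$ I would argue directly. Suppose $T_\psi(x) = \mu x$ with $\abs\mu = 1$, and write out the coefficient recursion from Figure~\ref{fig:action}: with $\zeta = 0$ the off-diagonal and diagonal equations decouple, and for homogeneous parameters the recursion relating $x_{m,n}$ to $x_{m\pm1,n\pm1}$ has constant coefficients (except at the boundary $m=0$ or $n=0$). Because $\ell^\infty(\N)$ is invariant and the restriction is the classical chain, the diagonal part $(x_{n,n})_n$ satisfies the classical eigenvalue equation; by irreducibility and recurrence/transience analysis of the classical homogeneous birth-and-death chain (as in \cite{Durrett1996}), for $\lambda = \nicefrac12$ the only bounded solution with eigenvalue on the unit circle is the constant one with $\mu = 1$. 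For the strictly-off-diagonal coefficients, the recursion factor $(\mu - (\text{product of }\alpha\text{'s}))$ combined with the $\lambda\beta^2$, $(1-\lambda)\beta^2$ weights forces a growth/decay estimate that (together with boundedness of $x$) kills all $x_{m,n}$ with $m \ne n$ — much as in the $\beta_n = (1/2)^n$ computation in Example~\ref{expl:psi+-}, except here the homogeneity makes the relevant factor bounded away from allowing non-trivial bounded solutions. Hence $x \in \C\one$, giving weak mixing.

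For $\lambda > \nicefrac12$ I would exhibit an infinite-dimensional fixed space explicitly. The natural candidates are diagonal operators: solving $T_\psi(\diag(\xi)) = \diag(\xi)$ on $\ell^\infty(\N)$ amounts to the classical harmonic-function equation for the homogeneous birth-and-death chain with up-rate $\lambda\beta^2$ and down-rate $(1-\lambda)\beta^2$; for $\lambda > \nicefrac12$ this chain is transient, so it admits non-constant bounded harmonic functions. Concretely I expect $\one$ together with the sequence $r^n$ for a suitable $0 < r < 1$ (of the form $r = (1-\lambda)/\lambda$ or its relatives, as in Proposition~\ref{prop:invariantState_zeta=0}) to span a two-dimensional space of diagonal fixed points; to get an \emph{infinite}-dimensional fixed space one should instead observe that the fixed space is a von Neumann algebra (Remark~\ref{rmk:subh}) containing a non-trivial projection, and use reducibility — indeed by Theorem~\ref{thm:extremal}-type phenomena and the subharmonic projections already found for $\psi_+$ — to split off an invariant block, or more simply note that since $T_\psi$ restricted to $\ell^\infty(\N)$ is a transient classical chain, every tail-measurable bounded function is fixed, yielding infinitely many linearly independent diagonal fixed operators (e.g.\ indicator-type limits, or the family $r^n, r^{2n}, \dots$ need not work, but bounded harmonic functions of a transient chain form an infinite-dimensional space). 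I would pin this down by directly verifying that for each $k$ the operator $\diag(0,\dots,0,1,1,\dots)$ shifted appropriately is \emph{not} fixed but that suitable combinations $\sum_n c_n p_n$ with $c_n$ a bounded solution of the two-term recursion are, and that the solution space of that recursion is infinite-dimensional once one allows the full second-order recursion (two free parameters per boundary, but the relevant space of bounded solutions modulo decaying ones is infinite because one can match arbitrary tail behaviour).

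The main obstacle is the $\lambda = \nicefrac12$ borderline case: here the classical chain is null-recurrent, Theorem~\ref{thm:irred_weaklymix} does not apply (no invariant normal state), and one cannot simply quote recurrence to rule out eigenvalues — the argument must show by hand that the constant-coefficient recursion for both the diagonal and the off-diagonal coefficients admits no non-trivial \emph{bounded} solution with $\abs\mu = 1$ and $\mu \ne 1$, and only the constants for $\mu = 1$. This is a careful but elementary estimate on linear recursions with the explicit homogeneous coefficients; I would isolate it as the technical heart of the proof.
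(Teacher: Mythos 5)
Your reduction of the case $\lambda\le\nicefrac12$ to Proposition~\ref{prop:invariantState_zeta=0}, Theorem~\ref{thm:irred_weaklymix} and Example~\ref{expl:psi+-} matches the paper, but both remaining pieces have genuine problems. The more serious one is $\lambda>\nicefrac12$: your plan stays inside the diagonal algebra, and there the fixed space is only $\C\one$. For the homogeneous chain the harmonic equation reads $h(n)=(1-\lambda)h(n-1)+\lambda h(n+1)$ for $n\ge1$, i.e.\ $h(n+1)-h(n)=\tfrac{1-\lambda}{\lambda}\bigl(h(n)-h(n-1)\bigr)$, while the boundary equation at $0$ (using $\alpha_0=1$) forces $h(1)=h(0)$; hence every bounded diagonal fixed point is constant, for \emph{every} $\lambda$. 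Transience does not rescue this: a birth and death chain drifting to $+\infty$ has trivial Poisson boundary, so ``bounded harmonic functions of a transient chain form an infinite-dimensional space'' is false here. Your fallbacks also fail: for a faithful diagonal $\psi$ with $\nicefrac12<\lambda<1$ the operator $T_\psi$ \emph{is} irreducible (Theorem~\ref{thm:irred_weaklymix}), so there is no invariant block to split off and no non-trivial fixed projection; and $\fix{T_\psi}$ is not a von Neumann algebra in this regime, since Remark~\ref{rmk:subh} requires a faithful invariant normal state, which does not exist for $\lambda>\nicefrac12$. The missing idea is that the infinitely many fixed points are \emph{off-diagonal}: the paper takes $x:=\bigl(\tfrac{\lambda}{1-\lambda}+\tfrac{2\lambda-1}{1-\lambda}\alpha\bigr)\one-d$ with $d=\diag\bigl(1,\tfrac{1-\lambda}{\lambda},(\tfrac{1-\lambda}{\lambda})^2,\dots\bigr)$ and verifies that $y_n:=s^nx$, $n\ge1$, are linearly independent fixed points, each supported on a single subdiagonal (note that $x$ itself is \emph{not} fixed; the shift changes the boundary behaviour).

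For $\lambda=\nicefrac12$ your sketch points in the right direction (constant-coefficient recursion along each diagonal plus boundedness), but the deferred ``technical heart'' conceals the actual difficulty. The transfer matrix for $x_{m,m+k}\mapsto x_{m+1,m+k+1}$ has determinant $1$, so its eigenvalues need not split as one inside and one outside the unit circle; when both lie on the unit circle no growth estimate kills anything, and one must argue separately that then $\mu=\pm1$ and that a Jordan block or a sign contradiction produces an unbounded solution. Even after that, each off-diagonal still carries a one-dimensional space of bounded solutions $x_{m,m+k}=\omega_1^m\,x_{0,k}$; the recursion alone does not annihilate the off-diagonal part. What finishes the paper's argument is the corner equations at $m=0$ combined with extremality of $\mu$ in the unit disc (since $\mu$ would have to be a non-trivial convex combination of $1$, $\omega_1$ and $\alpha$), which forces $x_{0,k}=0$ for $k\ge1$ and hence diagonality of $x$; the analogous boundary argument then settles $\mu\ne1$. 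Your proposal omits this boundary/extremality step entirely.
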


\begin{proof}
	For $0<\lambda < \nicefrac12$ the assertion follows from Theorem~\ref{thm:irred_weaklymix} and Proposition~\ref{prop:invariantState_zeta=0}. The case $\lambda=0$ (\ie $\psi=\psi_-$) was treated in Example~\vref{expl:psi+-}. Here we only deal with $\lambda = \nicefrac12$ in the first part and with $\lambda > \nicefrac12$ in the second part of the proof.
	\begin{enumerate}
	\item 
		Let $\lambda = \nicefrac12$ and let $x \in \B(\H)$ with $T_\psi(x) = \mu x$ for some $\mu \in \C$, $\abs \mu = 1$. Then the coefficients of $x$ satisfy the following equations (\cf Figure \vref{fig:action}):
		\begin{align}
			\mu\, x_{0,0} &= \tfrac12(\alpha^2 +1) x_{0,0} + \tfrac12 \beta^2 x_{1,1}	
			\label{eq:prop:infant_mixing:1}
			\\
			\mu\, x_{0,k} &= \tfrac12(\alpha^2 +\alpha) x_{0,k} + \tfrac12 \beta^2 x_{1,k+1}, \quad k \ge 1,
			\label{eq:prop:infant_mixing:2}
			\\
			\mu\, x_{m,n}&= \alpha^2 x_{m,n} + \tfrac12 \beta^2 x_{m+1, n+1} + \tfrac12 \beta^2 x_{m-1, n-1}, \quad m,n \ge 1.
			\label{eq:prop:infant_mixing:3}
		\end{align}
		Fix $k \in \N$ and consider the matrix
		\begin{equation*}
			A := \begin{pmatrix} 
				2 \bigl( 1 + \frac{\mu-1}{\beta^2} \bigr) & -1
				\\
				1 & 0
			\end{pmatrix}.
		\end{equation*}
		By Equation~\eqref{eq:prop:infant_mixing:3} we have
		\begin{equation}
			\begin{pmatrix} 
				x_{m+1, m+k+1} \\ x_{m,m+k}
			\end{pmatrix}
			= A \; \begin{pmatrix}
				x_{m,m+k} \\ x_{m-1, m-1+k}
			\end{pmatrix}
			= A^m \begin{pmatrix}
				x_{1,k+1} \\ x_{0,k}
			\end{pmatrix}
			\label{eq:prop:infant_mixing:4}
		\end{equation}
		for all $m \ge 1$, $k\geq0$. Each solution of this recursion is determined by $x_{0,k}$ and $x_{1,k+1}$, that is, the set of solutions is 2-dimensional. Denote by $\omega_1, \omega_2 \in \C$ the two (possibly equal) eigenvalues of $A$ and assume $\abs{\omega_1} \le \abs{\omega_2}$. Note that
		\begin{align*}
			\omega_1 \omega_2 &= \det(A) = 1,
			&
			\tfrac12 \omega_1 + \tfrac12 \omega_2 &= \tfrac12 \Tr(A) = 1 + \tfrac{\mu-1}{\beta^2}.
		\end{align*}
		If $\abs{\omega_2} >1$ then setting $\tilde x_{m,m+k} := \omega_2^m$, $m \in \N$, provides an unbounded solution for~\eqref{eq:prop:infant_mixing:4}. If $\abs{\omega_2} \le 1$ then $\omega_2 = \omega_1^{-1} = \overline{\omega_1}$, since $\abs{\omega_1}\leq\abs{\omega_2}$ and $\omega_1 \omega_2=1$. Hence $\mu$ must be real, \ie $\mu = \pm 1$.
		For $\mu=1$ setting $\tilde x_{m,m+k} := m$, $m \in \N$, provides an unbounded solution for \eqref{eq:prop:infant_mixing:4}. For $\mu = (-1)$ the assumption $\beta^2 < 1$ leads to the contradiction
		\begin{equation*}
			-1 \le \Re \omega_1=\tfrac12 \omega_1 + \tfrac12 \bar\omega_1 = 1 + \tfrac{\mu-1}{\beta^2} < 1 - 2 = -1\,,
		\end{equation*}
		and for $\beta^2 = 1$ setting $\tilde x_{m,m+k} := (-1)^m \cdot m$, $m \in \N$, again provides an unbounded solution of \eqref{eq:prop:infant_mixing:4}.
		In any case the multiples of $\tilde x_{m,m+k} := \omega_1^m$, $m \in \N$, are the only bounded solutions of Equation~\eqref{eq:prop:infant_mixing:4}. In particular, $x_{m,m+k} = \omega_1^m \cdot x_{0,k}$ for all $m \in \N$.

		Now Equation~\eqref{eq:prop:infant_mixing:1} and Equation~\eqref{eq:prop:infant_mixing:2} simplify to
		\begin{align*}
			0 &= (\tfrac12 \alpha^2 + \tfrac12 \beta^2 \omega_1 +  \tfrac12 - \mu) \, x_{0,0} = \bigl((1-\tfrac12\beta^2)+\tfrac12\beta^2\omega_1-\mu\bigr)\, x_{0,0}\,,
			\\
			0 &= (\tfrac12 \alpha^2 + \tfrac12 \beta^2 \omega_1 + \tfrac12 \alpha - \mu)\,  x_{0,k}= \Bigl(\tfrac12\alpha+\tfrac12\bigl((1-\beta^2)+\beta^2\omega_1\bigr)-\mu\Bigr)\, x_{0,k}
		\end{align*}
		for all $k \ge 1$.
		Observe that the factor in the first (second) equation only vanishes if $\mu$ is a non-trivial convex combination of $1$ and $\omega_1$ (and $\alpha$). Since $\mu$ is an extremal point of the unit disc and $\alpha \neq \mu$, we may conclude that $x_{0,k} = 0$ for all $k \ge 1$. It follows $x_{m,m+k} = 0$ for all $m \in \N$ and $k \ge 1$. Because the element $x^*$ satisfies $T_\psi(x^*) = \bar\mu x^*$, we may likewise deduce $x_{n+k,n} = 0$ for all $n \in \N$ and $k \ge 1$, \ie, $x$~is a diagonal operator.
		For $\mu = 1$ this completes the proof; we are left with the trivial fixed point $x = x_{0,0} \cdot \one$. For $\mu \neq 1$, again using extremality of $\mu$ we may conclude that $x_{0,0} = 0$, \ie $x = 0$.
	\item
		Let $\lambda>\nicefrac12$, define a bounded diagonal operator
		\begin{align}
			d := \begin{pmatrix}
				1  \\
				& \tfrac{1-\lambda}{\lambda} \\
				&  & \left(\tfrac{1-\lambda}{\lambda}\right)^2 \\
				&&& \ddots &
			\end{pmatrix}\label{eq:prop:infant_fixedSpace_zeta=0:d}
		\end{align}
		and set $x:=\left(\tfrac{\lambda}{1-\lambda}+\tfrac{2\lambda-1}{1-\lambda}\alpha\right)\one-d$. Then a straightforward computation verifies that for each $n \ge 1$ the element $y_n:=s^n x$ is a fixed point of $T_\psi$. Moreover, the set $\{y_n\,\vert\, n\geq1\}$ is obviously linearly independent and hence the fixed space is infinite dimensional.\qedhere
	\end{enumerate}
\end{proof}

\subsection*{Baby maser}

Now we consider a further restriction of the model, setting $\alpha_n := \alpha := 0$ and $\beta_n:=\beta:=1$ for all $n\geq1$. We refer to this choice of model parameters as the \emph{baby maser}. In this case the Kraus decomposition of Proposition~\vref{prop:KrausDecomp} reduces to
\begin{gather}
	T_\psi(x) = \lambda \cdot (t_1^* x t_1 + t_2^* x t_2) \; +\;  (1-\lambda)(1-\abs{\zeta}^2) \cdot (t_3^* x t_3 + t_4^* x t_4)
	\label{eq:Kraus_Baby}
	\shortintertext{with}
	t_1 = i\zeta \sqrt{\tfrac{1-\lambda}{\lambda}} \, s^* \;,
	\qquad
	t_2 = s - i\zeta\sqrt{\tfrac{1-\lambda}{\lambda}} \, p_0 \;,
	\qquad
	t_3 = s^* \;,
	\qquad
	t_4 = p_0 
	\notag
\end{gather}
for the state $\psi$ on $M_2$ with parameters $0\le \lambda \le 1$ and $\zeta \in \D$ as in Equation~\vref{eq:density}. A~diagram of the action of $T_\psi$ on the canonical matrix units of $\BH$ is shown in Figure~\vref{fig:action_toy}. Notice that for an arbitrary state $\psi$ on $M_2$ the algebra $\ell^\infty(\N) \subseteq \BH$ of diagonal operators is invariant under~$T_\psi$.

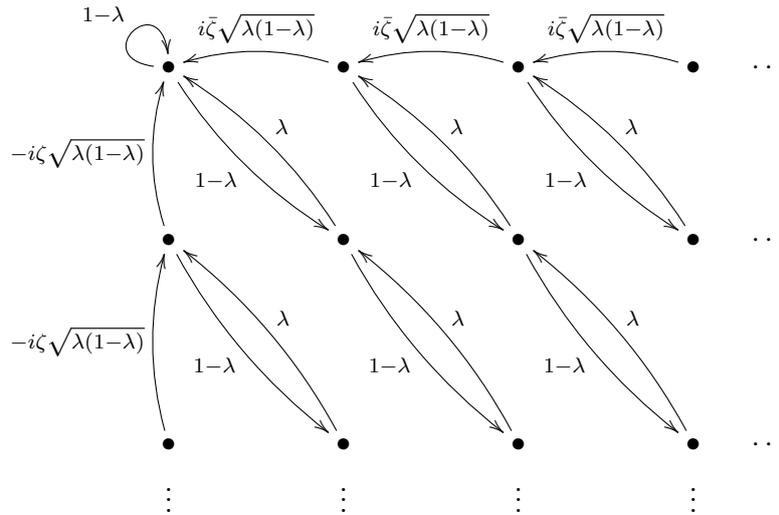
\begin{figure}[htb]
	\[
	\begin{xy}
		\xymatrix{
			\bullet \ar@/_/[rrdd]_{1-\lambda} \ar@(l,u)^{1-\lambda}
			& & \bullet \ar@/_/[rrdd]_{1-\lambda} \ar@/_/[ll]_{i\bar\zeta\sqrt{\lambda(1-\lambda)}}
			& & \bullet \ar@/_/[rrdd]_{1-\lambda} \ar@/_/[ll]_{i\bar\zeta\sqrt{\lambda(1-\lambda)}}
			& & \bullet \ar@/_/[ll]_{i\bar\zeta\sqrt{\lambda(1-\lambda)}}
			& \hspace{-1em}\cdots \\ & \\
			\bullet \ar@/_/[rrdd]_{1-\lambda} \ar@/^/[uu]^{-i\zeta\sqrt{\lambda(1-\lambda)}}
			& & \bullet \ar@/_/[rrdd]_{1-\lambda} \ar@/_/[uull]_{\lambda}
			& & \bullet \ar@/_/[rrdd]_{1-\lambda} \ar@/_/[uull]_{\lambda}
			& & \bullet \ar@/_/[uull]_{\lambda}
			& \hspace{-1em}\cdots \\ & \\
			**[d] \underset{\underset{\vdots}{ }}{\bullet} \ar@/^/[uu]^{-i\zeta\sqrt{\lambda(1-\lambda)}}
			& & **[d] \underset{\underset{\vdots}{ }}{\bullet} \ar@/_/[uull]_{\lambda}
			& & **[d] \underset{\underset{\vdots}{ }}{\bullet} \ar@/_/[uull]_{\lambda}
			& & **[d] \underset{\underset{\vdots}{ }}{\bullet} \ar@/_/[uull]_{\lambda}
			& \hspace{-1em}\cdots
		}
	\end{xy}
	\]
	\caption{Action of $T_\psi$ for $\alpha_n=0$ and $\beta_n=1$ for all $n\geq1$.}
	\label{fig:action_toy}
\end{figure}

\begin{thm}
	\label{thm:babymaser_mixing}
	Let $\alpha_n=0$ and $\beta_n=1$ for all $n\geq 1$ and let $\psi$ be an arbitrary state on~$M_2$ parametrized as in Equation \eqref{eq:density} with $0 \le \lambda \le 1$ and $\zeta\in\D$.
	\\
	Then $T_\psi$ is weakly mixing (and hence ergodic) if and only if $\lambda \le \nicefrac12$. For $\lambda > \nicefrac12$ the fixed point space is infinite dimensional.
\end{thm}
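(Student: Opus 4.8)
The plan is to reduce everything to the scalar equations the matrix coefficients $(x_{m,n})$ of an operator $x\in\B(\H)$ with $T_\psi(x)=\mu x$, $|\mu|=1$, must satisfy. Reading these off from the Kraus decomposition \eqref{eq:Kraus_Baby} (cf.\ Figure~\ref{fig:action_toy}) I get $\mu x_{m,n}=(1-\lambda)x_{m-1,n-1}+\lambda x_{m+1,n+1}$ for $m,n\ge1$, $\mu x_{m,0}=\lambda x_{m+1,1}-i\zeta\sqrt{\lambda(1-\lambda)}\,x_{m+1,0}$ for $m\ge1$, the mirror equation in the first row with $i\bar\zeta$, and a corner equation at $(0,0)$ combining both. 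For $\zeta=0$ the assertion is Proposition~\ref{prop:infant_mixing} (with $\lambda=0$, i.e.\ $\psi=\psi_-$, already in Example~\ref{expl:psi+-}), so I will assume $\zeta\neq0$; then $0<\lambda<1$ and the characteristic polynomial $\lambda t^2-\mu t+(1-\lambda)$ has no root $0$.

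For the direction $\lambda\le\nicefrac12\Rightarrow$ weak mixing, I would argue as in Proposition~\ref{prop:infant_mixing}: along each subdiagonal $a_j:=x_{j+N,j}$ is a bounded solution of $\lambda a_{j+1}-\mu a_j+(1-\lambda)a_{j-1}=0$, and since the two characteristic roots have product $\tfrac{1-\lambda}{\lambda}\ge1$ — with two distinct unit-circle roots impossible when $\lambda\le\nicefrac12$ — the bounded solutions are exactly the multiples of $(\rho^{\,j})_j$ for a root $\rho$ of modulus $\le1$, the same on every subdiagonal (and $x=0$ if neither root lies in $\overline{\mathbb D}$). The same recursion governs the superdiagonals, so $x_{j+N,j}=\gamma_N\rho^{\,j}$ and $x_{j,j+N}=\delta_N\rho^{\,j}$. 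Plugging this into the first--column equation and using $\lambda\rho+(1-\lambda)\rho^{-1}=\mu$ collapses it to $\gamma_{m+1}=\sigma\gamma_m$ $(m\ge1)$ with $\sigma=\tfrac{i}{\zeta\rho}\sqrt{\tfrac{1-\lambda}{\lambda}}$, and similarly $\delta_{n+1}=\tau\delta_n$ with $|\tau|=|\sigma|$. Because $\lambda\le\nicefrac12$, $|\zeta|\le1$ and $|\rho|\le1$ give $|\sigma|\ge1$; hence finiteness of $\|xe_0\|^2=|\gamma_0|^2+\sum_{m\ge1}|\sigma|^{2(m-1)}|\gamma_1|^2$ (and of $\|x^*e_0\|^2$) forces $\gamma_1=\delta_1=0$, so all off-diagonals vanish and $x$ is diagonal with $x_{j,j}=\gamma_0\rho^{\,j}$. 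The corner equation then reads $\mu\gamma_0=(1-\lambda)\gamma_0+\lambda\rho\gamma_0$, which together with $\lambda\rho^2-\mu\rho+(1-\lambda)=0$ forces $\rho=1$ and $\mu=1$ unless $\gamma_0=0$; so $x\in\mathbb C\one$ and $T_\psi$ is weakly mixing.

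For the direction $\lambda>\nicefrac12$, the characteristic roots are $1$ and $\kappa:=\tfrac{1-\lambda}{\lambda}\in(0,1)$, both in $\overline{\mathbb D}$: each subdiagonal now carries two bounded modes against a single boundary constraint, which leaves room for an infinite-dimensional fixed space. For $\zeta=0$ this is Proposition~\ref{prop:infant_mixing}(2) (and for $\lambda=1$, $\psi=\psi_+$, directly $T_{\psi_+}(x)=s^*xs$ fixes all $s^k$ and $(s^*)^k$). For $\zeta\ne0$, writing $d:=\diag(\kappa^n)_{n\in\N}$, I would check against the coefficient equations — the identities $\lambda(\kappa+1)=1$ and $\sqrt{\lambda(1-\lambda)}=\lambda\sqrt\kappa$ making the first--column equations close — that for each $K\ge1$ the operator
\[
	y_K:=s^K(d-\one)+\tfrac{\sqrt\kappa}{i\zeta}\,s^{K+1}\bigl(d-\kappa^{-1}\one\bigr)\in\B(\H)
\]
is fixed by $T_\psi$; since $y_K$ lives on the $K$-th and $(K{+}1)$-st subdiagonals with non-zero $K$-th subdiagonal, restricting $\sum_K c_K y_K=0$ to the subdiagonals $1,2,3,\dots$ in turn gives $c_1=c_2=\cdots=0$, so $\fix{T_\psi}$ is infinite dimensional.

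I expect the genuinely delicate point to be the passage from $\zeta=0$ to $\zeta\ne0$: once $\zeta\ne0$ the diagonal subalgebra $\ell^\infty(\N)$ is no longer $T_\psi$-invariant, the boundary equations truly couple neighbouring subdiagonals, and the argument stands or falls on the observation that this coupling is expanding (ratio $|\sigma|\ge1$) exactly in the regime $\lambda\le\nicefrac12$ — which is what simultaneously collapses $x$ to a scalar there and leaves the slack for the family $(y_K)$ when $\lambda>\nicefrac12$. The only subcase needing extra attention is $\lambda=\nicefrac12$, $|\zeta|=1$, where $|\sigma|$ drops to $1$ precisely when $\mu=\pm1$ (so that $|\rho|=1$); there the bound $\sum_{m\ge1}|\sigma|^{2(m-1)}|\gamma_1|^2=\sum_{m\ge1}|\gamma_1|^2<\infty$ is still — but barely — enough to conclude $\gamma_1=0$.
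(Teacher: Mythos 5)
Your argument is correct. For the direction $\lambda\le\nicefrac12$ it is essentially the paper's proof: the same three\nobreakdash-term recursion $\lambda a_{j+1}-\mu a_j+(1-\lambda)a_{j-1}=0$ along each sub/superdiagonal (the paper packages it as powers of a $2\times2$ companion matrix), the same observation that for $\lambda\le\nicefrac12$ the bounded solution space is at most one\nobreakdash-dimensional and spanned by $(\rho^j)_j$ with $\abs\rho\le1$, the same use of the boundary column/row equations with expanding ratio of modulus $\ge1$ to kill the off\nobreakdash-diagonal amplitudes via square\nobreakdash-summability of $xe_0$ and $x^*e_0$, and the same corner equation to force $\mu=1$ and $x\in\C\one$. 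Your remark that two distinct unit\nobreakdash-circle roots are impossible (and that a double root necessarily has modulus one) is exactly the case analysis the paper carries out for $\lambda=\nicefrac12$.

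For $\lambda>\nicefrac12$ with $\zeta\neq0$ your construction genuinely differs from the paper's, and in fact repairs it. The paper takes the \emph{diagonal} operator $x=\lambda(\one-s^*ds)-i\bar\zeta\sqrt{\lambda(1-\lambda)}\,(\one-d)$ and claims $y_n=s^nx$, supported on the single $n$\nobreakdash-th subdiagonal, is fixed; but Equations \eqref{eq:thm:baby_mixing:1} and \eqref{eq:thm:baby_mixing:2} couple subdiagonal $m$ to subdiagonal $m+1$ through the term $-i\zeta\sqrt{\lambda(1-\lambda)}\,x_{m+1,0}$, so a one\nobreakdash-subdiagonal ansatz cannot work for $\zeta\neq0$ (e.g.\ for $\lambda=\nicefrac23$, $\zeta=1$ the paper's $y_1$ violates \eqref{eq:thm:baby_mixing:1}: the left side is $0$ while the right side is $-i\sqrt2/9$). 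Your $y_K=s^K(d-\one)+\tfrac{\sqrt\kappa}{i\zeta}s^{K+1}(d-\kappa^{-1}\one)$ spreads the fixed point over the two coupled subdiagonals, and the identities $\lambda(\kappa+1)=1$ and $\sqrt{\lambda(1-\lambda)}=\lambda\sqrt\kappa$ make both the interior recursion \eqref{eq:thm:baby_mixing:4} and the boundary equations \eqref{eq:thm:baby_mixing:1}--\eqref{eq:thm:baby_mixing:3} close; I have checked this, and your triangular argument for linear independence is sound. Your separate treatment of $\zeta=0$ via Proposition~\ref{prop:infant_mixing} and of $\lambda=1$ via $T_{\psi_+}(x)=s^*xs$ covers the remaining cases cleanly.
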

\begin{proof}
	For an element $x \in \B(\H)$ we denote by $x_{m,n} := \scal{x\,e_n,e_m}$ its coefficients. Let $x \in \B(\H)$ and $\mu \in \C$, $\abs \mu = 1$. Then $T_\psi(x) = \mu x$ if and only if the following equations are satisfied (\cf Figure~\ref{fig:action_toy}):
	\begin{align}
		\mu \, x_{0,0} &= (1-\lambda) x_{0,0} + \lambda x_{1,1} + i \bar\zeta \sqrt{\lambda(1-\lambda)} \, x_{0,1} - i \zeta \sqrt{\lambda(1-\lambda)} \, x_{1,0} \,,
		\label{eq:thm:baby_mixing:1}
		\\
		\mu \, x_{m,0} &= \lambda x_{m+1,1} - i \zeta \sqrt{\lambda(1-\lambda)} \, x_{m+1,0}\,, 
		\qquad  \qquad
		m \ge 1\,,
		\label{eq:thm:baby_mixing:2}
		\\
		\mu \, x_{0,n} &= \lambda x_{1,n+1} + i \bar\zeta \sqrt{\lambda(1-\lambda)} \, x_{0,n+1}\,,
		\qquad  \qquad
		n \ge 1\,,
		\label{eq:thm:baby_mixing:3}
		\\
		\mu \, x_{m,n} &= \lambda x_{m+1,n+1} + (1-\lambda) x_{m-1,n-1}\,,
		\qquad \qquad
		n,m \ge 1\,.
		\label{eq:thm:baby_mixing:4}
	\end{align}
	First let $0 < \lambda \le \nicefrac12$. (The case $\lambda=0$ was already studied in Example~\vref{expl:psi+-}.) Consider the matrix 
	\begin{equation*}
		A := \begin{pmatrix} 
			\frac{\mu}{\lambda} & -\frac{1-\lambda}{\lambda} \\ 
			1 & 0 
		\end{pmatrix}
	\end{equation*}
	By Equation~\ref{eq:thm:baby_mixing:4} we have
	\begin{equation}
		\begin{pmatrix} 
			x_{m+1,m+k+1} \\ x_{m,m+k}
		\end{pmatrix}
		= A \begin{pmatrix}
			x_{m,m+k} \\ x_{m-1,m+k-1}
		\end{pmatrix}
		= A^m \begin{pmatrix}
			x_{1,k+1} \\ x_{0,k}
		\end{pmatrix}
		\label{eq:thm:baby_mixing:5}
	\end{equation}
	for all $m \ge 1$, $k \ge 0$. Hence the solutions of this recursion are determined by $x_{0,k}$ and $x_{1,k+1}$, that is, the set of solutions is 2-dimensional.
	Denote by $\omega_1$, $\omega_2$ the two (possibly equal) eigenvalues of $A$ and assume $\abs{\omega_1} \le \abs{\omega_2}$. Note that 
	\begin{align*}
		\omega_1 \omega_2 &= \det(A) = \tfrac{1-\lambda}{\lambda}\,,
		& 
		\omega_1 + \omega_2 &= \Tr(A) = \lambda^{-1}\mu\,.
	\end{align*}
	For $\lambda < \nicefrac12$ we have $\omega_1 \omega_2 > 1$, and hence $\abs{\omega_2} > 1$. This provides the unbounded solution $\tilde x_{m,m+k} := \omega_2^m$ ($m \in \N$) for Equation~\eqref{eq:thm:baby_mixing:5}. For $\lambda = \nicefrac12$ we have $\tfrac12 \omega_1 + \tfrac12 \omega_2 = \mu$. Since $\mu$ is an extremal point of the unit disc, we have either $\abs{\omega_2} > 1$ or $\omega_1 = \omega_2 = \mu$. In the first case again $\tilde x_{m,m+k} := \omega_2^m$ ($m \in \N$) is an unbounded solution of \eqref{eq:thm:baby_mixing:5}. In the latter case we have $\mu^2 = \omega_1 \omega_2 = 1$ and hence $\mu = \pm 1$. Then $\tilde x_{m,m+k} := \mu^m \cdot m$ ($m \in \N$) provides an unbounded solution. In any case the space of bounded solutions of Equation~\eqref{eq:thm:baby_mixing:5} is one-dimensional. Consequently, the coefficients of $x$ are given by $x_{m,m+k} = \omega_1^m \cdot x_{0,k}$ for all $k,m \ge 0$.

	From Equation~\eqref{eq:thm:baby_mixing:3} we now deduce
	\begin{equation*}
		(\mu - \lambda \omega_1) x_{0,k} = i \bar\zeta \sqrt{\lambda(1-\lambda)} \, x_{0,k+1}\,,
	\end{equation*}
	for every $k \ge 1$. For $\zeta \neq 0$ we obtain $x_{0,k+1} = \Bigl( \frac{\mu-\lambda\omega_1}{i \bar\zeta \sqrt{\lambda(1-\lambda)}} \Bigr)^k x_{0,1}$ for every $k \in \N$. Since $(x_{0,k})_{k \in \N} = x^* e_0$ is a square summable sequence and
	\begin{equation*}
		\abs[\Big]{\frac{\mu - \lambda \omega_1}{i \bar \zeta \sqrt{\lambda(1-\lambda)}}}
		\ge \abs[\Big]{\frac{\lambda \omega_2}{\sqrt{\lambda(1-\lambda)}}} 
		= \abs[\Big]{\frac{\omega_2}{\omega_1}}^{\tfrac12} \ge 1\,,
	\end{equation*}
	it follows that $x_{0,k} = 0$ for every $k \ge 1$. For $\zeta = 0$ we obviously have $x_{0,k} = 0$ for every $k \ge 1$. Therefore, $x_{m,m+k} = 0$ for all $m \in \N$, $k \ge 1$, and, analogously, we obtain $x_{n+k,n} = 0$ for all $n \in \N$, $k \ge 1$. For $\mu = 1$ we conclude that $x \in \C \one$. For $\mu \neq 1$ Equation~\eqref{eq:thm:baby_mixing:1} reduces to
	\begin{align*}
		0 &= -\mu\, x_{0,0} + (1-\lambda) x_{0,0} + \lambda x_{1,1}
		= (1-\lambda-\mu) x_{0,0} + \lambda \omega_1 x_{0,0}
		\\
		&= (1-\lambda-\lambda\omega_2) x_{0,0}
		= (1-\lambda-\tfrac{1-\lambda}{\omega_1}) x_{0,0}
		= (1-\lambda)(1-\omega_1^{-1}) x_{0,0}\,.
	\end{align*}
	This yields $x_{0,0} = 0$ and hence $x = 0$.
	
	Conversely, let $\lambda>\nicefrac12$. Define the diagonal operator $d$ as in Equation~\vref{eq:prop:infant_fixedSpace_zeta=0:d} 
	and set $x:=\lambda (\one-s^*ds)-i\bar\zeta\sqrt{\lambda(1-\lambda)}\,(\one -d)$. Then it is a straightforward computation to check that $y_n:=s^nx$ 
	satisfies Equations \eqref{eq:thm:baby_mixing:1} through \eqref{eq:thm:baby_mixing:4} for each $n\geq 1$, that is, $y_n$ is a fixed point of $T_\psi$. Moreover, the set $\{y_n\,\vert\, n\geq1\}$ is linearly independent and hence the fixed space is infinite dimensional.
\end{proof}

\begin{thm}
	\label{thm:babymaser_invariant_state}
	Let $\alpha_n=0$ and $\beta_n=1$ for all $n\geq 1$ and let $\psi$ be an arbitrary state on~$M_2$ parametrized as in Equation \eqref{eq:density} with $0 \le \lambda \le 1$ and $\zeta\in\D$.
	\\
	Then the transition operator $T_\psi$ admits an invariant normal state $\varphi$ on $\BH$ if and only if $\lambda<\nicefrac12$. In this case $\varphi$ is given by 
	\begin{equation*}
		\overline{\varphi(e_{n+k,n})} 
		= \varphi(e_{n,n+k}) 
		= \frac{1-2\lambda}{1-\lambda} \biggl( i \bar \zeta\sqrt{\frac{\lambda}{1-\lambda}} \; \biggr)^k \biggl( \frac{\lambda}{1 - \lambda} \biggr)^n \qquad (n,k \in \N) \;.
	\end{equation*}
\end{thm}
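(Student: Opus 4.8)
The plan is to pass to the Schr\"odinger picture and solve the resulting fixed-point equation for the density matrix in closed form. A normal state $\varphi=\Tr(\rho\,\cdot\,)$ is invariant for $T_\psi$ precisely when the positive trace-class operator $\rho$ with $\Tr\rho=1$ is a fixed point of the predual map $\rho\mapsto\sum_i a_i\rho a_i^{*}$, where the $a_i$ are the Kraus operators of $T_\psi$. For the baby maser I would read this off from the Kraus decomposition~\eqref{eq:Kraus_Baby}: writing $w:=i\bar\zeta\sqrt{\lambda(1-\lambda)}$ and using $\abs{\zeta}^{2}(1-\lambda)+(1-\lambda)(1-\abs{\zeta}^{2})=1-\lambda$, the predual map collapses to
\[
  \rho\;\longmapsto\;(1-\lambda)\,s^{*}\rho s+\lambda\,s\rho s^{*}+(1-\lambda)\,p_0\rho p_0+w\,s\rho p_0+\bar w\,p_0\rho s^{*}\,.
\]
Evaluating on the matrix coefficients $\rho_{m,n}:=\scal{\rho e_n,e_m}$ and abbreviating $t:=\lambda/(1-\lambda)$, invariance turns into the system
\begin{align*}
  \rho_{m,n}&=(1-\lambda)\rho_{m+1,n+1}+\lambda\rho_{m-1,n-1}&&(m,n\ge1),\\
  \rho_{m,0}&=(1-\lambda)\rho_{m+1,1}+w\,\rho_{m-1,0}&&(m\ge1),\\
  \lambda\rho_{0,0}&=(1-\lambda)\rho_{1,1}\,,
\end{align*}
together with the complex conjugate of the middle line at $m=0$.

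For the direction ``only if'' I would observe that the last relation together with the diagonal case $m=n$ of the first one already forces $\rho_{n,n}=t^{\,n}\rho_{0,0}$ for every $n$ (with $\rho_{0,0}=0$ when $\lambda=1$). For $\lambda\ge\nicefrac12$ we have $t\ge1$, so boundedness of $\rho$ and finiteness of $\Tr\rho=\sum_n\rho_{n,n}$ leave only $\rho_{0,0}=0$; hence $\rho_{n,n}=0$ for all $n$, and since $\rho\ge0$ this forces $\rho=0$, contradicting $\Tr\rho=1$. (This is the analogue of the argument in Proposition~\ref{prop:invariantState_zeta=0}.)

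For ``if'' ($\lambda<\nicefrac12$) I would solve the recursions one off-diagonal at a time. On the $k$-th off-diagonal the first relation is a three-term recursion in $n$ with characteristic polynomial $(1-\lambda)x^{2}-x+\lambda$, whose roots are $1$ and $t$; since $t<1$ and $\rho$ is Hilbert--Schmidt, square-summability of $(\rho_{m,n})$ rules out the root-$1$ component, so $\rho_{n+k,n}=B_k\,t^{\,n}$ for some $B_k\in\C$. The diagonal case $k=0$ gives $B_0=\rho_{0,0}=1-t=\tfrac{1-2\lambda}{1-\lambda}$, the value dictated by $\Tr\rho=1$; and the boundary relation at $m=k$, using $(1-\lambda)t=\lambda$, reduces to $B_k=\tfrac{w}{1-\lambda}B_{k-1}=i\bar\zeta\sqrt{t}\,B_{k-1}$. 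Hence $B_k=\tfrac{1-2\lambda}{1-\lambda}\bigl(i\bar\zeta\sqrt{t}\bigr)^{k}$, which is precisely the asserted formula for $\varphi(e_{n,n+k})=\Tr(\rho\,e_{n,n+k})=\rho_{n+k,n}$ (and $\varphi(e_{n+k,n})=\overline{\rho_{n+k,n}}$); the same argument simultaneously gives uniqueness.

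It then remains to verify that this $\rho$ is \emph{positive} (trace-class is automatic, since $\sum_n\rho_{n,n}=\tfrac{1-2\lambda}{1-\lambda}\sum_n t^{\,n}=1$). Here I would factor $\rho=\tfrac{1-2\lambda}{1-\lambda}(UD)K(UD)^{*}$ with $D:=\diag\!\bigl(t^{\,n/2}\bigr)$, $U:=\diag\!\bigl((i\bar\zeta/\abs{\zeta})^{n}\bigr)$ unitary and $K:=\bigl(\abs{\zeta}^{\,\abs{m-n}}\bigr)_{m,n}$, and deduce $K\ge0$ from the identity $(\one-\abs{\zeta}s)K(\one-\abs{\zeta}s^{*})=(1-\abs{\zeta}^{2})\one+\abs{\zeta}^{2}p_0\ge0$ (using $ss^{*}=\one-p_0$); for $\abs{\zeta}=1$ the operator $\rho$ degenerates to the rank-one state $\tfrac{1-2\lambda}{1-\lambda}\ket{\xi}\bra{\xi}$ with $\xi=(i\bar\zeta\sqrt{t}\,)^{n}_{n\in\N}$, in accordance with Proposition~\ref{prop:infant_pureInvariantState}. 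The bookkeeping above (the three recursions, their characteristic roots, the geometric-series normalisation) is routine; the one step that genuinely requires an idea is exactly this positivity of $\rho$ in the properly mixed case $0<\abs{\zeta}<1$, where $\rho$ has infinite rank and one must exploit the Kac--Murdock--Szeg\H{o} type positivity of the Toeplitz kernel $\bigl(\abs{\zeta}^{\abs{m-n}}\bigr)$ isolated above.
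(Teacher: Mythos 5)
Your proposal is correct, and for the existence direction it takes a genuinely different route from the paper. For the non-existence part ($\lambda\ge\nicefrac12$) both arguments live on the diagonal: the paper rewrites the recursion for $\varphi(p_n)$ as a statement about the differences $\varphi(p_{n+1})-\varphi(p_n)$, whose absolute values are non-decreasing yet must tend to zero; you instead solve the recursion explicitly to get $\rho_{n,n}=(\lambda/(1-\lambda))^n\rho_{0,0}$ and contradict summability — same substance, interchangeable. The real divergence is in the existence part. The paper first proves weak mixing (Theorem \ref{thm:babymaser_mixing}), then uses ergodicity together with uniqueness of the Jordan decomposition to reduce the problem to exhibiting \emph{any} nonzero invariant normal functional, which it writes down as $\rho=\tfrac{1-2\lambda}{1-\lambda}(\tilde d+v\tilde d+\tilde d v^*)$ and verifies by a direct computation; positivity of $\rho$ is never checked, because the Jordan trick extracts an invariant \emph{state} from the invariant functional. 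You instead derive the candidate $\rho$ by solving the three-term recursions off-diagonal by off-diagonal (which has the bonus of delivering uniqueness of the invariant state without appealing to ergodicity) and then confront positivity head-on via the factorization $\rho=\tfrac{1-2\lambda}{1-\lambda}(UD)K(UD)^*$ with the Toeplitz kernel $K=(\abs{\zeta}^{\abs{m-n}})_{m,n}$, whose positivity you get from the identity $(\one-\abs{\zeta}s)K(\one-\abs{\zeta}s^*)=(1-\abs{\zeta}^2)\one+\abs{\zeta}^2p_0$ and invertibility of $\one-\abs{\zeta}s$ for $\abs\zeta<1$. I checked the details: your predual equations, the characteristic roots $1$ and $\lambda/(1-\lambda)$, the boundary relation $B_k=i\bar\zeta\sqrt{\lambda/(1-\lambda)}\,B_{k-1}$, and the Toeplitz identity are all correct, and trace-classness of $(UD)K(UD)^*$ follows since $UD$ is Hilbert--Schmidt and $K$ is bounded. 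Your approach is more self-contained (it does not lean on the weak-mixing theorem) at the cost of the extra positivity lemma; the paper's approach is shorter but imports ergodicity. Both are valid proofs of the theorem.
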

\begin{rmk}
	Note that for $\lambda < \nicefrac12$ the invariant normal state $\varphi$ is unique, since $T_\psi$ is ergodic. Moreover, if the state $\psi$ is pure then also $\varphi$ is pure by Proposition~\ref{prop:infant_pureInvariantState}; and if $\psi$ is faithful then also $\varphi$ is faithful, since $T_\psi$ is irreducible (\cf Theorem~\vref{thm:irred_weaklymix}).
\end{rmk}

\begin{proof}
	First let $\lambda\geq \nicefrac12$ and assume that there is an invariant normal state \mbox{$\varphi=\varphi\circ T_\psi$} on~$\BH$. Then this state satisfies 
	\begin{gather*}
		\varphi(p_n) = \lambda \varphi(p_{n-1}) + (1-\lambda) \varphi(p_{n+1})\;,
		\shortintertext{or equivalently,}
		\varphi(p_n) - \varphi(p_{n-1}) = \frac{1-\lambda}{\lambda}  \bigl( \varphi(p_{n+1}) - \varphi(p_n) \bigr)
	\end{gather*}
	for every $n \ge 1$. Since $\frac{1-\lambda}{\lambda} \le 1$, the sequence $\bigl(\abs{\varphi(p_{n+1}) - \varphi(p_n)}\bigr)_{n \in \N}$ is non-decreasing. Moreover, $1 = \varphi(\one) = \sum_{n=0}^\infty \varphi(p_n)$ implies that this sequence converges to zero as $n \to \infty$. Consequently, we have $\abs{\varphi(p_{n+1}) - \varphi(p_n)} = 0$ for every $n \in \N$, \ie \mbox{$\varphi(p_{n+1}) = \varphi(p_n)$}. But this contradicts $1 = \varphi(\one) = \sum_{n=0}^\infty \varphi(p_n)$.

	Now let $\lambda<\nicefrac12$. Since $T_\psi$ is ergodic by Theorem~\ref{thm:babymaser_mixing}, due to uniqueness of the Jordan decomposition it suffices to find an invariant normal functional $\varphi \neq 0$ to prove the asserted existence. This functional necessarily satisfies $\varphi(\one) \neq 0$ and the unique invariant normal state is then given by $\varphi / \varphi(\one)$.
	We define a diagonal trace class operator
	\begin{align*}
		\tilde d := \begin{pmatrix}
			1  \\
			& \tfrac{\lambda}{1-\lambda} \\
			&  & \left(\tfrac{\lambda}{1-\lambda}\right)^2 \\
			&&& \ddots &
		\end{pmatrix}
	\end{align*}
	and set $v:=\sum_{k=1}^\infty\Bigl(i\bar\zeta\sqrt{\tfrac{\lambda}{1-\lambda}}\, s\Bigr)^k$. Then $\rho:=\frac{1-2\lambda}{1-\lambda}\bigl(\tilde d+v\tilde d+\tilde d v^*\bigr)$ is a trace class operator that satisfies the relations
	\begin{align*}
		(1-\lambda)s^*\rho s=\lambda\rho,\qquad \lambda s\rho s^*=(1-\lambda)p_0^\bot\rho p_0^\bot,\qquad i\bar\zeta\sqrt{\lambda}\, s\rho p_0=\sqrt{1-\lambda}\, p_0^\bot \rho p_0.
	\end{align*}
	A straightforward computation verifies
	\begin{align*}
		 \rho=\;& (\sqrt{\lambda}s-i\zeta\sqrt{1-\lambda}\,p_0)\rho (\sqrt{\lambda}s-i\zeta\sqrt{1-\lambda}\,p_0)^*\\
		 & +(1-\lambda)s^*\rho s+(1-\lambda)(1-\abs{\zeta}^2)p_0\rho p_0\\
		 =\;&\lambda \cdot (t_1 \rho t_1^* + t_2 \rho t_2^*) \; +\;  (1-\lambda)(1-\abs{\zeta}^2) \cdot (t_3 \rho t_3^* + t_4 \rho t_4^*).
	\end{align*}
	Hence for the functional $\varphi := \Tr(\rho \,\cdot\,)$ we obtain $\varphi \circ T_\psi = \varphi$ using the trace property. Clearly, $\varphi$ is of the asserted form and satisfies $\varphi(\one) = 1$.
\end{proof}

\section{Summary}	\label{sec:summary}

In the previous sections we studied our class of quantum birth and death chains for different choices of model parameters $\alpha_n$, $\beta_n$. As a summary we regroup our results by the examined properties and visualize them. Throughout this section $\psi$ denotes a state on $M_2$ parametrized as in Equation~\eqref{eq:density} with parameters $0 \le \lambda \le 1$ and $\zeta \in \mathbb D$.

\subsection*{Irreducibility}
For general model parameters the transition operator $T_\psi$ is irreducible if $\psi$ is faithful, \ie, if $0< \lambda < 1$ and $\abs\zeta < 1$. It is not irreducible for the pure states $\psi_+$ and $\psi_-$. Moreover, for the model parameters $\alpha_n = \alpha$ and $\beta_n = \beta$, $n \ge 1$, it is not irreducible for pure states $\psi$ with $\lambda < \tfrac12(1-\alpha)$. 
These results can be deduced from Thm.\,\ref{thm:irred_weaklymix}, Ex.\,\ref{expl:psi+-}, and Prop.\,\ref{prop:infant_pureInvariantState}. 
For pure states $\psi$ with $\tfrac12(1-\alpha) < \lambda < 1$ we do not have any results concerning irreducibility.

\begin{figure}[htb]
	\centering
	\begin{minipage}[c]{0.3\linewidth}
		\centering
		\begin{small}\begin{tikzpicture}[line cap=round,line join=round,>=triangle 45,x=1.8cm,y=1.8cm]
			\fill[fill=red,fill opacity=0.4] (1.9,3) -- (1.9,2.9) -- (2,2.9) -- (2,3) -- cycle; 
			\fill[fill=red,fill opacity=0.4] (1.9,1) -- (1.9,1.1) -- (2,1.1) -- (2,1) -- cycle; 
			\fill[fill=blue,fill opacity=0.6] (2,2.9) -- (1.9,2.9) -- (1.9,1.1) -- (2,1.1) -- cycle; 
			\fill [shift={(2,2)},fill=blue,fill opacity=0.6]  (0,0) --  plot[domain=-1.57:1.57,variable=\t]({1*0.9*cos(\t r)+0*0.9*sin(\t r)},{0*0.9*cos(\t r)+1*0.9*sin(\t r)}) -- cycle; 
			\draw [line width=1pt] (1.9,3)-- (2,3); 
			\draw [line width=1pt] (1.9,2.9)-- (2,2.9); 
			\draw [line width=1pt] (1.9,1.1)-- (2,1.1); 
			\draw [line width=1pt] (1.9,1)-- (2,1); 
			\draw [line width=1pt] (1.9,3)-- (1.9,1); 
			\draw [line width=1pt] (2,1)-- (2,3); 
			\draw [shift={(2,2)},line width=1pt]  plot[domain=-1.57:1.57,variable=\t]({1*1*cos(\t r)+0*1*sin(\t r)},{0*1*cos(\t r)+1*1*sin(\t r)}); 
			\draw [shift={(2,2)},line width=1pt]  plot[domain=-1.57:1.57,variable=\t]({1*0.9*cos(\t r)+0*0.9*sin(\t r)},{0*0.9*cos(\t r)+1*0.9*sin(\t r)}); 

			\draw (1.7,2.95) node[anchor=east] {$\lambda=1$};
			\draw [dash pattern=on .5pt off 1.5pt,line width=.5pt] (1.8,2.95)-- (1.7,2.95);
			\draw (1.7,1.05) node[anchor=east] {$\lambda=0$};
			\draw [dash pattern=on .5pt off 1.5pt,line width=.5pt] (1.8,1.05)-- (1.7,1.05);
			\draw (2.62,2.7) node[anchor=south west] {$|\zeta|=1$};
			\draw (1.97,2.99) node[anchor=south] {$\psi_+$};
			\draw (1.97,1.01) node[anchor=north] {$\psi_-$};
		\end{tikzpicture}\\
		\emph{Arbitrary model\\ parameters $\alpha_n,\beta_n$.}\end{small}
	\end{minipage}\hspace{0.04\linewidth}
	\begin{minipage}[c]{0.3\linewidth}
		\centering
		\begin{small}\begin{tikzpicture}[line cap=round,line join=round,>=triangle 45,x=1.8cm,y=1.8cm]
			\fill[fill=red,fill opacity=0.4] (1.9,3) -- (1.9,2.9) -- (2,2.9) -- (2,3) -- cycle; 
			\fill[fill=red,fill opacity=0.4] (1.9,1) -- (1.9,1.1) -- (2,1.1) -- (2,1) -- cycle; 
			\fill[fill=blue,fill opacity=0.6] (2,2.9) -- (1.9,2.9) -- (1.9,1.1) -- (2,1.1) -- cycle; 
			\fill [shift={(2,2)},fill=blue,fill opacity=0.6]  (0,0) --  plot[domain=-1.57:1.57,variable=\t]({1*0.9*cos(\t r)+0*0.9*sin(\t r)},{0*0.9*cos(\t r)+1*0.9*sin(\t r)}) -- cycle; 
			\fill [shift={(2,2)},fill=red,fill opacity=0.4]  plot[domain=-0.34:-1.57,variable=\t]({1*0.9*cos(\t r)+0*0.9*sin(\t r)},{0*0.9*cos(\t r)+1*0.9*sin(\t r)}) -- plot[domain=-1.57:-0.3,variable=\t]({1*1*cos(\t r)+0*1*sin(\t r)},{0*1*cos(\t r)+1*1*sin(\t r)}) -- cycle; 

			\draw [line width=1pt] (1.9,1.7)-- (2.95,1.7); 
			\draw [line width=1pt] (1.9,3)-- (2,3); 
			\draw [line width=1pt] (1.9,2.9)-- (2,2.9); 
			\draw [line width=1pt] (1.9,1.1)-- (2,1.1); 
			\draw [line width=1pt] (1.9,1)-- (2,1); 
			\draw [line width=1pt] (1.9,3)-- (1.9,1); 
			\draw [line width=1pt] (2,1)-- (2,3); 
			\draw [shift={(2,2)},line width=1pt]  plot[domain=-1.57:1.57,variable=\t]({1*1*cos(\t r)+0*1*sin(\t r)},{0*1*cos(\t r)+1*1*sin(\t r)}); 
			\draw [shift={(2,2)},line width=1pt]  plot[domain=-1.57:1.57,variable=\t]({1*0.9*cos(\t r)+0*0.9*sin(\t r)},{0*0.9*cos(\t r)+1*0.9*sin(\t r)}); 

			\draw (1.7,2.95) node[anchor=east] {$\lambda=1$};
			\draw [dash pattern=on .5pt off 1.5pt,line width=.5pt] (1.8,2.95)-- (1.7,2.95);
			\draw (1.7,1.7) node[anchor=east] {$\frac12(1\!-\!\alpha)$};
			\draw [dash pattern=on .5pt off 1.5pt,line width=.5pt] (1.8,1.7)-- (1.7,1.7);
			\draw (1.7,1.05) node[anchor=east] {$\lambda=0$};
			\draw [dash pattern=on .5pt off 1.5pt,line width=.5pt] (1.8,1.05)-- (1.7,1.05);
			\draw (2.62,2.7) node[anchor=south west] {$|\zeta|=1$};
			\draw (1.97,2.99) node[anchor=south] {$\psi_+$};
			\draw (1.97,1.01) node[anchor=north] {$\psi_-$};
		\end{tikzpicture}\\
		\emph{Toy example with\\ $\alpha_n:=\alpha$, $\beta_n:=\beta\neq0$.}\end{small}
	\end{minipage}\hspace{0.04\linewidth}
	\begin{minipage}[c]{0.3\linewidth}
		\centering
		\begin{small}\begin{tikzpicture}[line cap=round,line join=round,>=triangle 45,x=1.8cm,y=1.8cm]
			\fill[fill=red,fill opacity=0.4] (1.9,3) -- (1.9,2.9) -- (2,2.9) -- (2,3) -- cycle; 
			\fill[fill=red,fill opacity=0.4] (1.9,1) -- (1.9,1.1) -- (2,1.1) -- (2,1) -- cycle; 
			\fill[fill=blue,fill opacity=0.6] (2,2.9) -- (1.9,2.9) -- (1.9,1.1) -- (2,1.1) -- cycle; 
			\fill [shift={(2,2)},fill=blue,fill opacity=0.6]  (0,0) --  plot[domain=-1.57:1.57,variable=\t]({1*0.9*cos(\t r)+0*0.9*sin(\t r)},{0*0.9*cos(\t r)+1*0.9*sin(\t r)}) -- cycle; 
			\fill [shift={(2,2)},fill=red,fill opacity=0.4]  plot[domain=-0.05:-1.57,variable=\t]({1*0.9*cos(\t r)+0*0.9*sin(\t r)},{0*0.9*cos(\t r)+1*0.9*sin(\t r)}) --  plot[domain=-1.57:-0.05,variable=\t]({1*1*cos(\t r)+0*1*sin(\t r)},{0*1*cos(\t r)+1*1*sin(\t r)}) -- cycle; 

			\draw [line width=1pt] (1.9,2.05)-- (2.99,2.05); 
			\draw [line width=1pt] (1.9,1.95)-- (2.99,1.95); 
			\draw [line width=1pt] (1.9,3)-- (2,3); 
			\draw [line width=1pt] (1.9,2.9)-- (2,2.9); 
			\draw [line width=1pt] (1.9,1.1)-- (2,1.1); 
			\draw [line width=1pt] (1.9,1)-- (2,1); 
			\draw [line width=1pt] (1.9,3)-- (1.9,1); 
			\draw [line width=1pt] (2,1)-- (2,3); 
			\draw [shift={(2,2)},line width=1pt]  plot[domain=-1.57:1.57,variable=\t]({1*1*cos(\t r)+0*1*sin(\t r)},{0*1*cos(\t r)+1*1*sin(\t r)}); 
			\draw [shift={(2,2)},line width=1pt]  plot[domain=-1.57:1.57,variable=\t]({1*0.9*cos(\t r)+0*0.9*sin(\t r)},{0*0.9*cos(\t r)+1*0.9*sin(\t r)}); 

			\draw (1.7,2.95) node[anchor=east] {$\lambda=1$};
			\draw [dash pattern=on .5pt off 1.5pt,line width=.5pt] (1.8,2.95)-- (1.7,2.95);
			\draw (1.7,2) node[anchor=east] {$\lambda=\frac12$};
			\draw [dash pattern=on .5pt off 1.5pt,line width=.5pt] (1.8,2)-- (1.7,2);
			\draw (1.7,1.05) node[anchor=east] {$\lambda=0$};
			\draw [dash pattern=on .5pt off 1.5pt,line width=.5pt] (1.8,1.05)-- (1.7,1.05);
			\draw (2.62,2.7) node[anchor=south west] {$|\zeta|=1$};
			\draw (1.97,2.99) node[anchor=south] {$\psi_+$};
			\draw (1.97,1.01) node[anchor=north] {$\psi_-$};
		\end{tikzpicture}\\
		\emph{Baby maser, \ie,\\ $\alpha_n:=0$, $\beta_n:=1$.}\end{small}
	\end{minipage}
	\caption[Irreducibility]{These figures show the parameter regions\footnotemark{} for which the transition operator~$T_\psi$ with the specified choices of model parameters is \emph{irreducible}. Blue areas indicate that $T_\psi$ is irreducible, red areas indicate that $T_\psi$ is not irreducible, and regions for which we do not have any results are left blank.}
\end{figure}
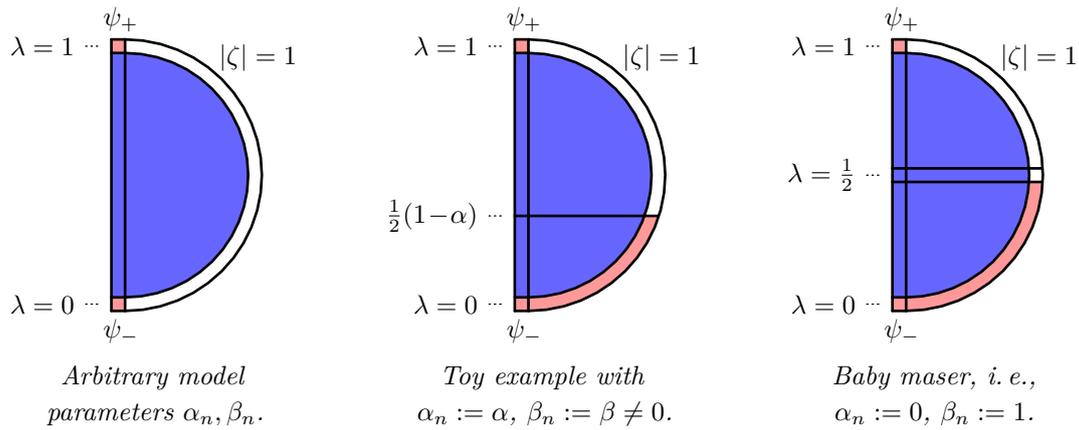\vspace{2ex}

\subsection*{Invariant Normal States}
\footnotetext{By Proposition \ref{prop:rotation} and Remark \vref{rmk:rotation} our results do not depend on the phase of $\zeta$. To visualize our results it is therefore enough to draw the right half of a cross section of the Bloch\;\textquoteleft sphere\textquoteright.}

For general model parameters and diagonal $\psi$ ($\zeta = 0$) the transition operator $T_\psi$ admits an invariant normal state if and only if  $\lambda < \nicefrac12$. For non-diagonal states $\psi$ we do not have general results about invariant normal states.

For $\alpha_n = \alpha$ and $\beta_n = \beta$, $n \ge 1$, we concentrated on studying pure states $\psi$ on $M_2$. For a pure state $\psi$ with $\lambda < \tfrac12(1-\alpha)$ we computed a pure invariant normal state for $T_\psi$. For pure states with $\lambda > \tfrac12(1-\alpha)$ we do not know whether there is an invariant normal state, but we showed that, if it exists, it cannot be pure.

For the baby maser ($\alpha_n = 0$, $\beta_n =1$) we found a complete characterization: For an arbitrary state $\psi$ the transition operator admits an invariant normal state if and only if $\lambda < \tfrac12$. We computed this state explicitly.

We showed for arbitrary model parameters that for a faithful state $\psi$ the invariant normal state is unique and faithful (given it exists at all). We do not know whether for a pure $\psi$ the invariant normal state must be pure, too. However, in all cases where we computed the invariant state this turned out to be true.
For these results consult Thm.\,\ref{thm:irred_weaklymix}, Prop.\,\ref{prop:invariantState_zeta=0} and \ref{prop:infant_pureInvariantState}, and Thm.\,\ref{thm:babymaser_invariant_state}.

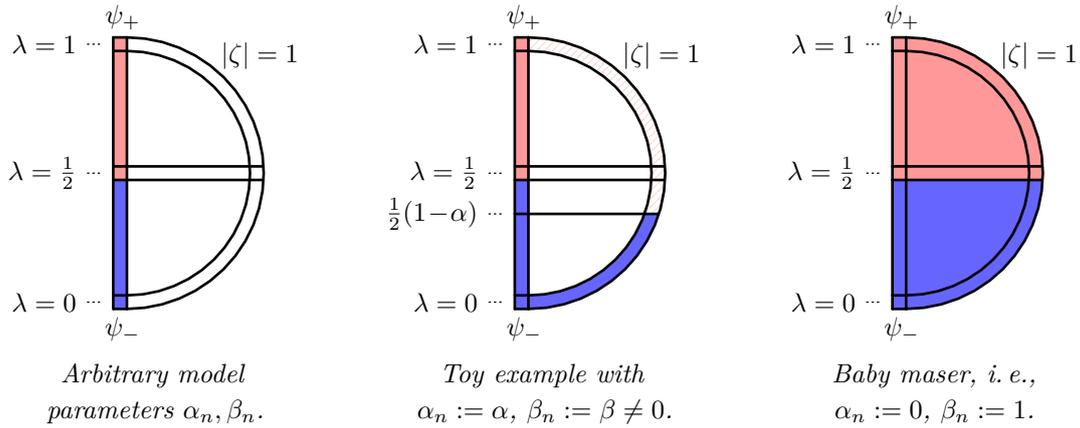
\begin{figure}[htb]
	\centering
	\begin{minipage}[c]{0.3\linewidth}
		\centering
		\begin{small}\begin{tikzpicture}[line cap=round,line join=round,>=triangle 45,x=1.8cm,y=1.8cm]
			\fill[fill=red,fill opacity=0.4] (1.9,3) -- (1.9,2.9) -- (2,2.9) -- (2,3) -- cycle; 
			\fill[fill=blue,fill opacity=0.6] (1.9,1) -- (1.9,1.1) -- (2,1.1) -- (2,1) -- cycle; 
			\fill[fill=red,fill opacity=0.4] (2,1.95) -- (1.9,1.95) -- (1.9,2.9) -- (2,2.9) -- cycle; 
			\fill[fill=blue,fill opacity=0.6] (2,1.95) -- (1.9,1.95) -- (1.9,1.1) -- (2,1.1) -- cycle; 

			\draw [line width=1pt] (1.9,2.05)-- (2.99,2.05); 
			\draw [line width=1pt] (1.9,1.95)-- (2.99,1.95); 
			\draw [line width=1pt] (1.9,3)-- (2,3); 
			\draw [line width=1pt] (1.9,2.9)-- (2,2.9); 
			\draw [line width=1pt] (1.9,1.1)-- (2,1.1); 
			\draw [line width=1pt] (1.9,1)-- (2,1); 
			\draw [line width=1pt] (1.9,3)-- (1.9,1); 
			\draw [line width=1pt] (2,1)-- (2,3); 
			\draw [shift={(2,2)},line width=1pt]  plot[domain=-1.57:1.57,variable=\t]({1*1*cos(\t r)+0*1*sin(\t r)},{0*1*cos(\t r)+1*1*sin(\t r)}); 
			\draw [shift={(2,2)},line width=1pt]  plot[domain=-1.57:1.57,variable=\t]({1*0.9*cos(\t r)+0*0.9*sin(\t r)},{0*0.9*cos(\t r)+1*0.9*sin(\t r)}); 

			\draw (1.7,2.95) node[anchor=east] {$\lambda=1$};
			\draw [dash pattern=on .5pt off 1.5pt,line width=.5pt] (1.8,2.95)-- (1.7,2.95);
			\draw (1.7,2) node[anchor=east] {$\lambda=\frac12$};
			\draw [dash pattern=on .5pt off 1.5pt,line width=.5pt] (1.8,2)-- (1.7,2);
			\draw (1.7,1.05) node[anchor=east] {$\lambda=0$};
			\draw [dash pattern=on .5pt off 1.5pt,line width=.5pt] (1.8,1.05)-- (1.7,1.05);
			\draw (2.62,2.7) node[anchor=south west] {$|\zeta|=1$};
			\draw (1.97,2.99) node[anchor=south] {$\psi_+$};
			\draw (1.97,1.01) node[anchor=north] {$\psi_-$};
		\end{tikzpicture}\\
		\emph{Arbitrary model\\ parameters $\alpha_n,\beta_n$.}\end{small}
	\end{minipage}\hspace{0.04\linewidth}
	\begin{minipage}[c]{0.3\linewidth}
		\centering
		\begin{small}\begin{tikzpicture}[line cap=round,line join=round,>=triangle 45,x=1.8cm,y=1.8cm]
			\fill[fill=red,fill opacity=0.4] (1.9,3) -- (1.9,2.9) -- (2,2.9) -- (2,3) -- cycle; 
			\fill[fill=blue,fill opacity=0.6] (1.9,1) -- (1.9,1.1) -- (2,1.1) -- (2,1) -- cycle; 
			\fill[fill=red,fill opacity=0.4] (2,1.95) -- (1.9,1.95) -- (1.9,2.9) -- (2,2.9) -- cycle; 
			\fill[fill=blue,fill opacity=0.6] (2,1.95) -- (1.9,1.95) -- (1.9,1.1) -- (2,1.1) -- cycle; 
			\fill [shift={(2,2)},pattern color=red,pattern=north east lines,opacity=0.4]  plot[domain=1.57:-0.34,variable=\t]({1*0.9*cos(\t r)+0*0.9*sin(\t r)},{0*0.9*cos(\t r)+1*0.9*sin(\t r)}) --  plot[domain=-0.3:1.57,variable=\t]({1*1*cos(\t r)+0*1*sin(\t r)},{0*1*cos(\t r)+1*1*sin(\t r)}) -- cycle; 
			\fill [shift={(2,2)},fill=blue,fill opacity=0.6]  plot[domain=-0.34:-1.57,variable=\t]({1*0.9*cos(\t r)+0*0.9*sin(\t r)},{0*0.9*cos(\t r)+1*0.9*sin(\t r)}) -- plot[domain=-1.57:-0.3,variable=\t]({1*1*cos(\t r)+0*1*sin(\t r)},{0*1*cos(\t r)+1*1*sin(\t r)}) -- cycle; 

			\draw [line width=1pt] (1.9,2.05)-- (2.99,2.05); 
			\draw [line width=1pt] (1.9,1.95)-- (2.99,1.95); 
			\draw [line width=1pt] (1.9,1.7)-- (2.95,1.7); 
			\draw [line width=1pt] (1.9,3)-- (2,3); 
			\draw [line width=1pt] (1.9,2.9)-- (2,2.9); 
			\draw [line width=1pt] (1.9,1.1)-- (2,1.1); 
			\draw [line width=1pt] (1.9,1)-- (2,1); 
			\draw [line width=1pt] (1.9,3)-- (1.9,1); 
			\draw [line width=1pt] (2,1)-- (2,3); 
			\draw [shift={(2,2)},line width=1pt]  plot[domain=-1.57:1.57,variable=\t]({1*1*cos(\t r)+0*1*sin(\t r)},{0*1*cos(\t r)+1*1*sin(\t r)}); 
			\draw [shift={(2,2)},line width=1pt]  plot[domain=-1.57:1.57,variable=\t]({1*0.9*cos(\t r)+0*0.9*sin(\t r)},{0*0.9*cos(\t r)+1*0.9*sin(\t r)}); 

			\draw (1.7,2.95) node[anchor=east] {$\lambda=1$};
			\draw [dash pattern=on .5pt off 1.5pt,line width=.5pt] (1.8,2.95)-- (1.7,2.95);
			\draw (1.7,2) node[anchor=east] {$\lambda=\frac12$};
			\draw [dash pattern=on .5pt off 1.5pt,line width=.5pt] (1.8,2)-- (1.7,2);
			\draw (1.7,1.7) node[anchor=east] {$\frac12(1\!-\!\alpha)$};
			\draw [dash pattern=on .5pt off 1.5pt,line width=.5pt] (1.8,1.7)-- (1.7,1.7);
			\draw (1.7,1.05) node[anchor=east] {$\lambda=0$};
			\draw [dash pattern=on .5pt off 1.5pt,line width=.5pt] (1.8,1.05)-- (1.7,1.05);
			\draw (2.62,2.7) node[anchor=south west] {$|\zeta|=1$};
			\draw (1.97,2.99) node[anchor=south] {$\psi_+$};
			\draw (1.97,1.01) node[anchor=north] {$\psi_-$};
		\end{tikzpicture}\\
		\emph{Toy example with\\ $\alpha_n:=\alpha$, $\beta_n:=\beta\neq0$.}\end{small}
	\end{minipage}\hspace{0.04\linewidth}
	\begin{minipage}[c]{0.3\linewidth}
		\centering
		\begin{small}\begin{tikzpicture}[line cap=round,line join=round,>=triangle 45,x=1.8cm,y=1.8cm]
			\fill[fill=red,fill opacity=0.4] (1.9,3) -- (1.9,2.9) -- (2,2.9) -- (2,3) -- cycle; 
			\fill[fill=blue,fill opacity=0.6] (1.9,1) -- (1.9,1.1) -- (2,1.1) -- (2,1) -- cycle; 
			\fill[fill=red,fill opacity=0.4] (2,1.95) -- (1.9,1.95) -- (1.9,2.9) -- (2,2.9) -- cycle; 
			\fill[fill=blue,fill opacity=0.6] (2,1.95) -- (1.9,1.95) -- (1.9,1.1) -- (2,1.1) -- cycle; 
			\fill [shift={(2,2)},fill=red,fill opacity=0.4]  (0,-0.05) --  plot[domain=-0.05:1.57,variable=\t]({1*0.9*cos(\t r)+0*0.9*sin(\t r)},{0*0.9*cos(\t r)+1*0.9*sin(\t r)}) -- cycle; 
			\fill [shift={(2,2)},fill=blue,fill opacity=0.6]  (0,-0.05) --  plot[domain=-1.57:-0.05,variable=\t]({1*0.9*cos(\t r)+0*0.9*sin(\t r)},{0*0.9*cos(\t r)+1*0.9*sin(\t r)}) -- cycle; 
			\fill [shift={(2,2)},fill=red,fill opacity=0.4]  plot[domain=1.57:-0.05,variable=\t]({1*0.9*cos(\t r)+0*0.9*sin(\t r)},{0*0.9*cos(\t r)+1*0.9*sin(\t r)}) --  plot[domain=-0.05:1.57,variable=\t]({1*1*cos(\t r)+0*1*sin(\t r)},{0*1*cos(\t r)+1*1*sin(\t r)}) -- cycle; 
			\fill [shift={(2,2)},fill=blue,fill opacity=0.6]  plot[domain=-0.05:-1.57,variable=\t]({1*0.9*cos(\t r)+0*0.9*sin(\t r)},{0*0.9*cos(\t r)+1*0.9*sin(\t r)}) --  plot[domain=-1.57:-0.05,variable=\t]({1*1*cos(\t r)+0*1*sin(\t r)},{0*1*cos(\t r)+1*1*sin(\t r)}) -- cycle; 

			\draw [line width=1pt] (1.9,2.05)-- (2.99,2.05); 
			\draw [line width=1pt] (1.9,1.95)-- (2.99,1.95); 
			\draw [line width=1pt] (1.9,3)-- (2,3); 
			\draw [line width=1pt] (1.9,2.9)-- (2,2.9); 
			\draw [line width=1pt] (1.9,1.1)-- (2,1.1); 
			\draw [line width=1pt] (1.9,1)-- (2,1); 
			\draw [line width=1pt] (1.9,3)-- (1.9,1); 
			\draw [line width=1pt] (2,1)-- (2,3); 
			\draw [shift={(2,2)},line width=1pt]  plot[domain=-1.57:1.57,variable=\t]({1*1*cos(\t r)+0*1*sin(\t r)},{0*1*cos(\t r)+1*1*sin(\t r)}); 
			\draw [shift={(2,2)},line width=1pt]  plot[domain=-1.57:1.57,variable=\t]({1*0.9*cos(\t r)+0*0.9*sin(\t r)},{0*0.9*cos(\t r)+1*0.9*sin(\t r)}); 

			\draw (1.7,2.95) node[anchor=east] {$\lambda=1$};
			\draw [dash pattern=on .5pt off 1.5pt,line width=.5pt] (1.8,2.95)-- (1.7,2.95);
			\draw (1.7,2) node[anchor=east] {$\lambda=\frac12$};
			\draw [dash pattern=on .5pt off 1.5pt,line width=.5pt] (1.8,2)-- (1.7,2);
			\draw (1.7,1.05) node[anchor=east] {$\lambda=0$};
			\draw [dash pattern=on .5pt off 1.5pt,line width=.5pt] (1.8,1.05)-- (1.7,1.05);
			\draw (2.62,2.7) node[anchor=south west] {$|\zeta|=1$};
			\draw (1.97,2.99) node[anchor=south] {$\psi_+$};
			\draw (1.97,1.01) node[anchor=north] {$\psi_-$};
		\end{tikzpicture}\\
		\emph{Baby maser, \ie,\\ $\alpha_n:=0$, $\beta_n:=1$.}\end{small}
	\end{minipage}
	\caption[Invariant normal states]{These figures show the parameter regions\footnotemark[\value{footnote}]{} for which $T_\psi$ with the specified choices of model parameters admits an \emph{invariant normal state}. Blue areas indicate that there is an invariant normal state for $T_\psi$, red areas indicate that there is no such state. For areas with red lines no \emph{pure} invariant normal state exists, and regions for which we do not have any results are left blank.
	}
\end{figure}\vspace{2ex}

\subsection*{Ergodicity and Mixing}
To study ergodicity we concentrated on diagonal states $\psi$ ($\zeta =0$). For general model parameters the transition operator $T_\psi$ of a diagonal state $\psi$ is weakly mixing (and hence ergodic) if $\lambda < \nicefrac12$. For $\lambda \ge \nicefrac12$ we do not know whether $T_\psi$ is ergodic or weakly mixing.
 
For $\alpha_n = \alpha$ and $\beta_n = \beta$, $n \ge1$, the transition operator $T_\psi$ of a diagonal state $\psi$ is also weakly mixing if $\lambda = \nicefrac12$. If $\lambda > \nicefrac12$ it is not even ergodic (and hence not weakly mixing). We provided explicitly infinitely many linearly independent fixed points.
 
For the baby maser ($\alpha_n=0$, $\beta_n=1$) we found a complete characterization for arbitrary states. Here the transition operator $T_\psi$ is weakly mixing for an arbitrary state $\psi$ in the closed lower hemisphere, \ie for $\lambda \le \nicefrac12$. In the open upper hemisphere, \ie for $\lambda > \nicefrac12$, $T_\psi$ is not ergodic and we provided infinitely many linearly independent fixed points.
These results are deduced from Thm.\,\ref{thm:irred_weaklymix}, Prop.\,\ref{prop:invariantState_zeta=0}, Thm.\,\ref{thm:babymaser_mixing}.

\begin{figure}[htb]
	\centering
	\begin{minipage}[c]{0.3\linewidth}
		\centering
		\begin{small}\begin{tikzpicture}[line cap=round,line join=round,>=triangle 45,x=1.8cm,y=1.8cm]
			\fill[pattern color=red,pattern=north east lines,opacity=0.4] (1.9,3) -- (1.9,2.9) -- (2,2.9) -- (2,3) -- cycle; 
			\fill[pattern color=blue,pattern=north west lines,opacity=0.6] (1.9,3) -- (1.9,2.9) -- (2,2.9) -- (2,3) -- cycle; 
			\fill[fill=blue,fill opacity=0.6] (1.9,1) -- (1.9,1.1) -- (2,1.1) -- (2,1) -- cycle; 
			\fill[fill=blue,fill opacity=0.6] (2,1.95) -- (1.9,1.95) -- (1.9,1.1) -- (2,1.1) -- cycle; 

			\draw [line width=1pt] (1.9,2.05)-- (2.99,2.05); 
			\draw [line width=1pt] (1.9,1.95)-- (2.99,1.95); 
			\draw [line width=1pt] (1.9,3)-- (2,3); 
			\draw [line width=1pt] (1.9,2.9)-- (2,2.9); 
			\draw [line width=1pt] (1.9,1.1)-- (2,1.1); 
			\draw [line width=1pt] (1.9,1)-- (2,1); 
			\draw [line width=1pt] (1.9,3)-- (1.9,1); 
			\draw [line width=1pt] (2,1)-- (2,3); 
			\draw [shift={(2,2)},line width=1pt]  plot[domain=-1.57:1.57,variable=\t]({1*1*cos(\t r)+0*1*sin(\t r)},{0*1*cos(\t r)+1*1*sin(\t r)}); 
			\draw [shift={(2,2)},line width=1pt]  plot[domain=-1.57:1.57,variable=\t]({1*0.9*cos(\t r)+0*0.9*sin(\t r)},{0*0.9*cos(\t r)+1*0.9*sin(\t r)}); 

			\draw (1.7,2.95) node[anchor=east] {$\lambda=1$};
			\draw [dash pattern=on .5pt off 1.5pt,line width=.5pt] (1.8,2.95)-- (1.7,2.95);
			\draw (1.7,2) node[anchor=east] {$\lambda=\frac12$};
			\draw [dash pattern=on .5pt off 1.5pt,line width=.5pt] (1.8,2)-- (1.7,2);
			\draw (1.7,1.05) node[anchor=east] {$\lambda=0$};
			\draw [dash pattern=on .5pt off 1.5pt,line width=.5pt] (1.8,1.05)-- (1.7,1.05);
			\draw (2.62,2.7) node[anchor=south west] {$|\zeta|=1$};
			\draw (1.97,2.99) node[anchor=south] {$\psi_+$};
			\draw (1.97,1.01) node[anchor=north] {$\psi_-$};
		\end{tikzpicture}\\
		\emph{Arbitrary model\\ parameters $\alpha_n,\beta_n$.}\end{small}
	\end{minipage}\hspace{0.04\linewidth}
	\begin{minipage}[c]{0.3\linewidth}
		\centering
		\begin{small}\begin{tikzpicture}[line cap=round,line join=round,>=triangle 45,x=1.8cm,y=1.8cm]
			\fill[fill=red,fill opacity=0.4] (1.9,3) -- (1.9,2.9) -- (2,2.9) -- (2,3) -- cycle; 
			\fill[fill=blue,fill opacity=0.6] (1.9,1) -- (1.9,1.1) -- (2,1.1) -- (2,1) -- cycle; 
			\fill[fill=red,fill opacity=0.4] (2,2.05) -- (1.9,2.05) -- (1.9,2.9) -- (2,2.9) -- cycle; 
			\fill[fill=blue,fill opacity=0.6] (2,2.05) -- (1.9,2.05) -- (1.9,1.1) -- (2,1.1) -- cycle; 

			\draw [line width=1pt] (1.9,2.05)-- (2.99,2.05); 
			\draw [line width=1pt] (1.9,1.95)-- (2.99,1.95); 
			\draw [line width=1pt] (1.9,3)-- (2,3); 
			\draw [line width=1pt] (1.9,2.9)-- (2,2.9); 
			\draw [line width=1pt] (1.9,1.1)-- (2,1.1); 
			\draw [line width=1pt] (1.9,1)-- (2,1); 
			\draw [line width=1pt] (1.9,3)-- (1.9,1); 
			\draw [line width=1pt] (2,1)-- (2,3); 
			\draw [shift={(2,2)},line width=1pt]  plot[domain=-1.57:1.57,variable=\t]({1*1*cos(\t r)+0*1*sin(\t r)},{0*1*cos(\t r)+1*1*sin(\t r)}); 
			\draw [shift={(2,2)},line width=1pt]  plot[domain=-1.57:1.57,variable=\t]({1*0.9*cos(\t r)+0*0.9*sin(\t r)},{0*0.9*cos(\t r)+1*0.9*sin(\t r)}); 

			\draw (1.7,2.95) node[anchor=east] {$\lambda=1$};
			\draw [dash pattern=on .5pt off 1.5pt,line width=.5pt] (1.8,2.95)-- (1.7,2.95);
			\draw (1.7,2) node[anchor=east] {$\lambda=\frac12$};
			\draw [dash pattern=on .5pt off 1.5pt,line width=.5pt] (1.8,2)-- (1.7,2);
			\draw (1.7,1.05) node[anchor=east] {$\lambda=0$};
			\draw [dash pattern=on .5pt off 1.5pt,line width=.5pt] (1.8,1.05)-- (1.7,1.05);
			\draw (2.62,2.7) node[anchor=south west] {$|\zeta|=1$};
			\draw (1.97,2.99) node[anchor=south] {$\psi_+$};
			\draw (1.97,1.01) node[anchor=north] {$\psi_-$};
		\end{tikzpicture}\\
		\emph{Toy example with\\ $\alpha_n:=\alpha$, $\beta_n:=\beta\neq0$.}\end{small}
	\end{minipage}\hspace{0.04\linewidth}
	\begin{minipage}[c]{0.3\linewidth}
		\centering
		\begin{small}\begin{tikzpicture}[line cap=round,line join=round,>=triangle 45,x=1.8cm,y=1.8cm]
			\fill[fill=red,fill opacity=0.4] (1.9,3) -- (1.9,2.9) -- (2,2.9) -- (2,3) -- cycle; 
			\fill[fill=blue,fill opacity=0.6] (1.9,1) -- (1.9,1.1) -- (2,1.1) -- (2,1) -- cycle; 
			\fill[fill=red,fill opacity=0.4] (2,2.05) -- (1.9,2.05) -- (1.9,2.9) -- (2,2.9) -- cycle; 
			\fill[fill=blue,fill opacity=0.6] (2,2.05) -- (1.9,2.05) -- (1.9,1.1) -- (2,1.1) -- cycle; 
			\fill [shift={(2,2)},fill=red,fill opacity=0.4]  (0,0.05) --  plot[domain=0.05:1.57,variable=\t]({1*0.9*cos(\t r)+0*0.9*sin(\t r)},{0*0.9*cos(\t r)+1*0.9*sin(\t r)}) -- cycle; 
			\fill [shift={(2,2)},fill=blue,fill opacity=0.6]  (0,0.05) --  plot[domain=-1.57:0.05,variable=\t]({1*0.9*cos(\t r)+0*0.9*sin(\t r)},{0*0.9*cos(\t r)+1*0.9*sin(\t r)}) -- cycle; 
			\fill [shift={(2,2)},fill=red,fill opacity=0.4]  plot[domain=1.57:0.05,variable=\t]({1*0.9*cos(\t r)+0*0.9*sin(\t r)},{0*0.9*cos(\t r)+1*0.9*sin(\t r)}) --  plot[domain=0.05:1.57,variable=\t]({1*1*cos(\t r)+0*1*sin(\t r)},{0*1*cos(\t r)+1*1*sin(\t r)}) -- cycle; 
			\fill [shift={(2,2)},fill=blue,fill opacity=0.6]  plot[domain=0.05:-1.57,variable=\t]({1*0.9*cos(\t r)+0*0.9*sin(\t r)},{0*0.9*cos(\t r)+1*0.9*sin(\t r)}) --  plot[domain=-1.57:0.05,variable=\t]({1*1*cos(\t r)+0*1*sin(\t r)},{0*1*cos(\t r)+1*1*sin(\t r)}) -- cycle; 

			\draw [line width=1pt] (1.9,2.05)-- (2.99,2.05); 
			\draw [line width=1pt] (1.9,1.95)-- (2.99,1.95); 
			\draw [line width=1pt] (1.9,3)-- (2,3); 
			\draw [line width=1pt] (1.9,2.9)-- (2,2.9); 
			\draw [line width=1pt] (1.9,1.1)-- (2,1.1); 
			\draw [line width=1pt] (1.9,1)-- (2,1); 
			\draw [line width=1pt] (1.9,3)-- (1.9,1); 
			\draw [line width=1pt] (2,1)-- (2,3); 
			\draw [shift={(2,2)},line width=1pt]  plot[domain=-1.57:1.57,variable=\t]({1*1*cos(\t r)+0*1*sin(\t r)},{0*1*cos(\t r)+1*1*sin(\t r)}); 
			\draw [shift={(2,2)},line width=1pt]  plot[domain=-1.57:1.57,variable=\t]({1*0.9*cos(\t r)+0*0.9*sin(\t r)},{0*0.9*cos(\t r)+1*0.9*sin(\t r)}); 

			\draw (1.7,2.95) node[anchor=east] {$\lambda=1$};
			\draw [dash pattern=on .5pt off 1.5pt,line width=.5pt] (1.8,2.95)-- (1.7,2.95);
			\draw (1.7,2) node[anchor=east] {$\lambda=\frac12$};
			\draw [dash pattern=on .5pt off 1.5pt,line width=.5pt] (1.8,2)-- (1.7,2);
			\draw (1.7,1.05) node[anchor=east] {$\lambda=0$};
			\draw [dash pattern=on .5pt off 1.5pt,line width=.5pt] (1.8,1.05)-- (1.7,1.05);
			\draw (2.62,2.7) node[anchor=south west] {$|\zeta|=1$};
			\draw (1.97,2.99) node[anchor=south] {$\psi_+$};
			\draw (1.97,1.01) node[anchor=north] {$\psi_-$};
		\end{tikzpicture}\\
		\emph{Baby maser, \ie,\\ $\alpha_n:=0$, $\beta_n:=1$.}\end{small}
	\end{minipage}
	\caption[Mixing]{These figures show the parameter regions\footnotemark[\value{footnote}]{} for which $T_\psi$ with the specified choices of model parameters is \emph{weakly mixing}. Blue areas indicate that $T_\psi$ is weakly mixing, red areas indicate that $T_\psi$ is not weakly mixing. For areas with blue and red lines either case can be true, and regions for which we do not have any results are left blank.
	}
\end{figure}
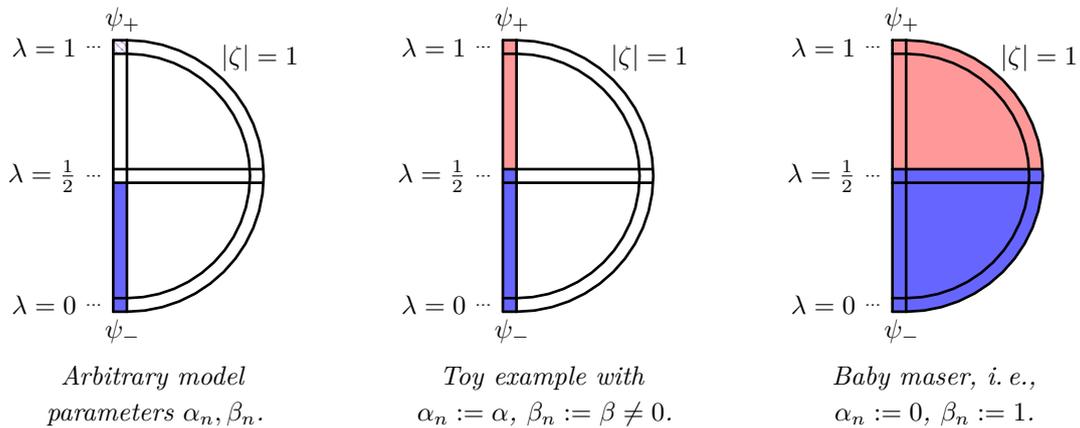\vspace{2ex}

\subsection{Connection Between Ergodicity and Irreducibility}

It is well known (and easy to see) that a ucp\nobreakdash-map that admits a faithful family of invariant normal states is ergodic if and only if it is irreducible. 
If a ucp\nobreakdash-map does not admit a faithful family of invariant normal states neither of the two implications hold in general.

Indeed, let $T_{\psi_-}$ be the transition operator corresponding to the state $\psi_-$ on~$M_2$ ($\lambda=0$, $\zeta=0$). As we have seen in Example~\vref{expl:psi+-}, this ucp\nobreakdash-map is ergodic but not irreducible for any choice of model parameters $\alpha_n,\,\beta_n$.

On the other hand, let $\psi$ be a state on $M_2$ parametrized as in Equation~\vref{eq:density} with $\nicefrac12<\lambda< 1$ and $T_\psi$ the corresponding transition operator of the baby maser ($\alpha_n=0$, $\beta_n=1$). Then $T_\psi$ is irreducible by Theorem~\vref{thm:irred_weaklymix} but the fixed space of $T_\psi$ is infinite dimensional (\cf Theorem~\vref{thm:babymaser_mixing}), \ie, $T_\psi$ is not ergodic.

\appendix
\section{Appendix}	\label{appendix}
In this section we include some laborious calculations for our transition operators $T_\psi$ as defined in Section \ref{sec:bndchain}. Throughout this section let $\psi$ be a  state on $M_2$ parametrized as in Equation \vref{eq:density} with $0 \le \lambda \le 1$ and $\zeta \in \D$ and we set $\nu:=i\zeta\sqrt{\lambda(1-\lambda)}$.

By Proposition \vref{prop:KrausDecomp} a Kraus decomposition of the transition operator $T_\psi$ is given by
	\begin{gather*}
		T_\psi(x) = \lambda (t_1^* x t_1 + t_2^* x t_2) + (1-\lambda)(1-\abs{\zeta}^2) (t_3^* x t_3 + t_4^* x t_4)
		\shortintertext{with}
		\begin{aligned}
			t_1 &:= s^* as + i \zeta \sqrt{\tfrac{1-\lambda}{\lambda}} \, s^* b \;,
			&
			t_2 &:= bs - i \zeta \sqrt{\tfrac{1-\lambda}{\lambda}} \, a \;,
			\\
			t_3 &:= s^* b \;, 
			& 
			t_4 &:= a \;.
		\end{aligned}
	\end{gather*}
The operators $t_1,\ldots,t_4$ and their adjoints act on the canonical orthonormal basis $(e_n)_{n \in \N}$ of $\H = \ell^2(\N)$ as follows
\begin{small}
\begin{align*}
	t_1\,e_n &= \begin{cases} \alpha_{n+1}\, e_n + i\zeta\sqrt{\tfrac{1-\lambda}{\lambda}}\,\beta_{n}\, e_{n-1} & \mbox{if }n\geq 1\,,\\
								\alpha_{n+1}\, e_n & \mbox{if }n=0\,, \end{cases} \\
	t_2\,e_n &= \beta_{n+1}\, e_{n+1} - i\zeta\sqrt{\tfrac{1-\lambda}{\lambda}}\,\alpha_{n}\, e_{n}\,,\\
	t_3\,e_n &=  \begin{cases} \beta_{n}\, e_{n-1} & \mbox{if }n\geq 1\,,\\
								0 & \mbox{if }n=0\,, \end{cases} \\
	t_4\,e_n &=\alpha_{n}\, e_{n}\,,
	\shortintertext{and}
	t_1^*\,e_n &= \alpha_{n+1}\, e_n - i\bar\zeta\sqrt{\tfrac{1-\lambda}{\lambda}}\,\beta_{n+1}\, e_{n+1}\,,\\
	t_2^*\,e_n &= \begin{cases}\beta_{n}\, e_{n-1} + i\bar\zeta\sqrt{\tfrac{1-\lambda}{\lambda}}\,\alpha_{n}\, e_{n} & \mbox{if }n\geq 1\,,\\
	i\bar\zeta\sqrt{\tfrac{1-\lambda}{\lambda}}\,\alpha_{0}\, e_{0} & \mbox{if }n=0\,, \end{cases} \\
	t_3^*\,e_n &= \beta_{n+1}\, e_{n+1}\,,\\
	t_4^*\,e_n &=\alpha_{n}\, e_{n}\,.
\end{align*}
\end{small}

Using the Bra-ket notation $\ket{e_n}\bra{e_m}:=e_{n,m}$ we see that this yields
\begin{footnotesize}
\begin{align*}
	t_1^*\,e_{n,m}\,t_1 =\; & t_1^*\;\ket{e_n}\bra{e_m}\;t_1 = \ket{t_1^*\,e_n}\bra{t_1^*\,e_m}
	\\
	=\;& \alpha_{n+1}\alpha_{m+1}\,e_{n,m}+\tfrac{1-\lambda}{\lambda}\abs{\zeta}^2\,\beta_{n+1}\beta_{m+1}\,e_{n+1,m+1}\;
	\\
	&-i\bar\zeta\sqrt{\tfrac{1-\lambda}{\lambda}}\,\beta_{n+1}\alpha_{m+1}\, e_{n+1,m}+i\zeta\sqrt{\tfrac{1-\lambda}{\lambda}}\,\alpha_{n+1}\beta_{m+1}\, e_{n,m+1}\,,
	\\
	t_2^*\,e_{n,m}\,t_2 =\;& \tfrac{1-\lambda}{\lambda}\abs{\zeta}^2\,\alpha_{n}\alpha_{m}\,e_{n,m} +
	\\
	&+\begin{cases}
		\beta_{n}\beta_{m}\,e_{n-1,m-1}-i\zeta\sqrt{\tfrac{1-\lambda}{\lambda}}\,\beta_{n}\,\alpha_{m}\, e_{n-1,m}+i\bar\zeta\sqrt{\tfrac{1-\lambda}{\lambda}}\,\alpha_{n}\beta_{m}\, e_{n,m-1} &\mbox{if }n, m\geq1\,,\\
		i\bar\zeta\sqrt{\tfrac{1-\lambda}{\lambda}}\,\alpha_{0}\beta_{m}\, e_{0,m-1} &\mbox{if }n=0,\; m\geq1\,,\\
		-i\zeta\sqrt{\tfrac{1-\lambda}{\lambda}}\,\beta_{n}\,\alpha_{0}\, e_{n-1,0} &\mbox{if }n\geq1,\; m=0\,,\\
		0 &\mbox{if }n=m=0\,,
	\end{cases}\\
	t_3^*\,e_{n,m}\,t_3 =\;& \beta_{n+1}\beta_{m+1}\,e_{n+1,m+1}\,,\\
	t_4^*\,e_{n,m}\,t_4 =\; & \alpha_{n}\alpha_{m}\,e_{n,m}\,.
\end{align*}
\end{footnotesize}

Hence for $m,n\in\N$ we have
\begin{small}
\begin{align*}
	T_\psi(e_{n,m})\;=\ &\bigl(\lambda\alpha_{n+1}\alpha_{m+1}+(1-\lambda)\alpha_{n}\alpha_{m}\bigr)e_{n,m} 
						+\bigl((1-\lambda)\beta_{n+1}\beta_{m+1}\bigr)e_{n+1,m+1}\\
					&+\bigl(\alpha_{m+1}\beta_{n+1}\bar\nu\bigr)e_{n+1,m}
						+\bigl(\alpha_{n+1}\beta_{m+1}\nu\bigr)e_{n,m+1}\\
					&+\begin{cases}
						\bigl(\lambda\beta_{n}\beta_{m}\bigr)e_{n-1,m-1}  - \bigl(\alpha_{n}\beta_{m}\nu\bigr)e_{n-1,m}  - \bigl(\alpha_{n}\beta_{m}\bar\nu\bigr)e_{n,m-1} &\mbox{if }n, m\geq1\,,\\
						- \bigl(\alpha_{0}\beta_{m}\bar\nu\bigr)e_{0,m-1} &\mbox{if }n=0,\; m\geq1\,,\\
						- \bigl(\alpha_{n}\beta_{0}\nu\bigr)e_{n-1,0} &\mbox{if }n\geq1,\; m=0\,,\\
						0 &\mbox{if }n=m=0\,.
						\end{cases}
\end{align*}
\end{small}

\begin{prop}	\label{prop:neighboringTransitions}
	For the diagonal projections $p_{[m,n]}=\sum\limits_{k=m}^n p_k=\sum\limits_{k=m}^n e_{k,k}$ ($m\leq n$) we have the following relations
	\begin{equation*}
		p_{[m+1,n-1]} \le T_\psi(p_{[m,n]}) \le p_{[m-1, n+1]} \;.
	\end{equation*}
\end{prop}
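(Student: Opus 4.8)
The plan is to work entirely from the explicit Kraus decomposition of Proposition~\ref{prop:KrausDecomp}: the transition operator has the form $T_\psi(x) = \sum_{i=1}^4 c_i\, t_i^* x t_i$ with non-negative coefficients $c_i$ (some of which vanish in the degenerate cases $\lambda\in\{0,1\}$ or $\abs{\zeta}=1$) and the operators $t_1,\dots,t_4$ listed there. The one structural input I need, read off from the action of the $t_i$ and the $t_i^*$ on the basis $(e_n)_{n\in\N}$ displayed above in this appendix, is the \emph{band property}: each $t_i$ and each $t_i^*$ maps $p_k\H$ into $p_{k-1}\H + p_k\H + p_{k+1}\H$ for every $k\in\N$, with the convention $p_{-1} := 0$. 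This holds because $t_3 = s^*b$, $t_4 = a$, and all of $t_1^*,\dots,t_4^*$ lie in the linear span of $a$, $s^*as$, $s^*b$, $bs$; the operators $a$ and $s^*as$ are diagonal and hence preserve each $p_k\H$, while $s^*b$ lowers and $bs$ raises the level by one. Summing over $k$ gives, for every interval $[k,l]$, the inclusions $t_i\,p_{[k,l]}\H \subseteq p_{[k-1,l+1]}\H$ and $t_i^*\,p_{[k,l]}\H \subseteq p_{[k-1,l+1]}\H$.

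Granting this, the upper bound is immediate. Since $t_i^*$ carries $p_{[m,n]}\H$ into $p_{[m-1,n+1]}\H$ we have $p_{[m-1,n+1]}^\bot\,t_i^*\,p_{[m,n]} = 0$ for every $i$, hence $p_{[m-1,n+1]}^\bot\,T_\psi(p_{[m,n]}) = \sum_i c_i\,p_{[m-1,n+1]}^\bot\,t_i^*\,p_{[m,n]}\,t_i = 0$. Writing $A := T_\psi(p_{[m,n]})$, which is self-adjoint with $0 \le A \le T_\psi(\one) = \one$, the relation $p_{[m-1,n+1]}^\bot A = 0$ forces $A = p_{[m-1,n+1]}\,A\,p_{[m-1,n+1]} \le p_{[m-1,n+1]}$, as desired. (For $m=0$ this is to be read with $p_{[-1,n+1]} := p_{[0,n+1]}$; the convention $p_{-1}=0$ is exactly what the band property respects, which is why the bound degenerates at the boundary.)

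For the lower bound I would pass to complements. If $n \le m+1$ then $p_{[m+1,n-1]}$ is the empty sum $0$ and nothing is to be shown, so assume $n \ge m+2$. Since $t_i$ carries $p_{[m+1,n-1]}\H$ into $p_{[m,n]}\H$, we have $p_{[m,n]}^\bot\,t_i\,p_{[m+1,n-1]} = 0$, hence $p_{[m+1,n-1]}\,t_i^*\,p_{[m,n]}^\bot = 0$ after taking adjoints, and therefore $p_{[m+1,n-1]}\,T_\psi(p_{[m,n]}^\bot) = \sum_i c_i\,p_{[m+1,n-1]}\,t_i^*\,p_{[m,n]}^\bot\,t_i = 0$. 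As in the previous paragraph, combining this identity with $0 \le T_\psi(p_{[m,n]}^\bot) \le \one$ gives $T_\psi(p_{[m,n]}^\bot) \le p_{[m+1,n-1]}^\bot$, so that $T_\psi(p_{[m,n]}) = \one - T_\psi(p_{[m,n]}^\bot) \ge p_{[m+1,n-1]}$.

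I do not expect a serious obstacle: the only step with genuine content is the band property, and that is a direct inspection of the formulas already assembled above; everything else is the routine fact that a positive contraction annihilated on one side by a projection is dominated by its complement. If one prefers to bypass the Kraus operators, the same two estimates drop out of the dilation picture $T_\psi(x) = P_\psi\bigl(u^*(x\tensor\one)u\bigr)$: the entries of $u$ and of $u^*$ again lie in the span of $a$, $s^*as$, $s^*b$, $bs$, so $u$ and $u^*$ map $p_{[k,l]}\H\tensor\C^2$ into $p_{[k-1,l+1]}\H\tensor\C^2$; applying the positive unital map $P_\psi$ to the projection $u^*(p_{[m,n]}\tensor\one)u \le p_{[m-1,n+1]}\tensor\one$ yields the upper bound and to its complement the lower bound.
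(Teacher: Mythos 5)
Your proof is correct and rests on the same key observation as the paper's: the Kraus operators $t_1,\dots,t_4$ and their adjoints shift the index of each basis vector by at most one, so that $t_i\,p_{[m+1,n-1]}\H\subseteq p_{[m,n]}\H$ and $t_i^*\,p_{[m,n]}\H\subseteq p_{[m-1,n+1]}\H$, after which positivity and unitality of $T_\psi$ do the rest. The paper packages the lower bound via vector states supported under $p_{[m+1,n-1]}$ and the identity $\scal{T_\psi(p_{[m,n]})\xi,\xi}=\scal{T_\psi(\one)\xi,\xi}=1$, whereas you pass to the complement $p_{[m,n]}^\bot$ and use that a positive contraction annihilated on one side by a projection is dominated by its complement; these are cosmetic variants of the same argument.
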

\begin{proof}
	The second inequality $T_\psi(p_{[m,n]}) \le p_{[m-1, n+1]}$ is obvious by the formula for $T_\psi(e_{n,m})$ above. For the first inequality consider a normal state $\varphi$ on $\BH$ with $\supp\varphi\leq p_{[m+1, n-1]}$. Then $\varphi$ is a (finite) convex combination of vector states $\scal{\;\cdot\ \xi_i\,,\,\xi_i}$ with $\xi_i\in p_{[m+1, n-1]}\,\H$. Let $t_1,\ldots,t_4$ be the operators of the Kraus decomposition of Proposition \vref{prop:KrausDecomp}. Then  $t_j\,\xi_i\in p_{[n,m]}\,\H$, $1\leq j\leq 4$, (see the calculations above) and hence
	\begin{align*}
		\scal{T_\psi(p_{[m,n]})\,\xi_i\,,\,\xi_i} =\; & \lambda \bigl( \scal{t_1^*\,p_{[m,n]}\,t_1\, \xi_i,\xi_i} + \scal{t_2^*\,p_{[m,n]}\,t_2\, \xi_i,\xi_i} \bigr)
		\\
		& + (1-\lambda)(1-\abs{\zeta}^2) \bigl( \scal{t_3^*\,p_{[n,m]}\,t_3\, \xi_i,\xi_i} + \scal{t_4^*\,p_{[n,m]}\,t_4\, \xi_i,\xi_i} \bigr)
		\\
		=\; & \lambda \bigl( \scal{t_1^*\,t_1\, \xi_i,\xi_i} + \scal{t_2^*\,t_2\, \xi_i,\xi_i} \bigr)
		\\
		& + (1-\lambda)(1-\abs{\zeta}^2) \bigl( \scal{t_3^*\,t_3\, \xi_i,\xi_i} + \scal{t_4^*\,t_4\, \xi_i,\xi_i} \bigr)\\
		=\; & \scal{T_\psi(\one)\xi_i\,,\,\xi_i} = \scal{\xi_i\,,\,\xi_i} = 1.
	\end{align*}
	This implies that $\varphi(T_\psi(p_{[n,m]}))=1$. Hence $p_{[n+1,m-1]}\,T_\psi(p_{[n,m]})\,p_{[n+1,m-1]}=p_{[n+1,m-1]}$, which yields the conclusion.
\end{proof}

\section*{Acknowledgments} Most of the results in this paper were derived during several mini-workshops of our research group at the Technische Universit\"at Darmstadt, at which the bachelor or master students Rebekka Burkholz, Jan D\"orner, Anja Kleinke, Florian Steinberg, and Stefan Wiedenmann were also participating.

\bibliographystyle{amsalpha_ag}
\phantomsection
\addcontentsline{toc}{section}{References}
\begin{small}
\bibliography{bndchains}
\end{small}
\label{LastPage}

\end{document}